\numberwithin{equation}{section}
\theoremstyle{plain}
\newtheorem{theorem}{Theorem}[section]
\newtheorem{lemma}[theorem]{Lemma}
\newtheorem{corollary}[theorem]{Corollary}
\newtheorem{proposition}[theorem]{Proposition}
\theoremstyle{definition}
\newtheorem{definition}[theorem]{Definition}
\newtheorem{example}[theorem]{Example}
\newcommand{\coker}{\mathrm{coker}}
\newcommand{\im}{\mathrm{im}}
\newcommand{\ind}{\mathrm{ind}}
\newcommand{\tr}{\mathrm{tr}}
\newcommand{\ord}{\mathrm{ord}}
\begin{document}

\title[Functional calculus and joint torsion]{Functional calculus and joint torsion of pairs of almost commuting operators}

\author{Joseph Migler}

\address{Department of Mathematics \\ University of Colorado \\ Boulder, CO 80309-0395}

\email{joseph.migler@colorado.edu}

\date{}

\begin{abstract}
This paper investigates the transformation of determinants of pairs of Fredholm operators with trace class commutators.
We study the extent to which the functional calculus commutes, modulo operator ideals, with projections in a finitely summable Fredholm module.
As an application, we recover in particular some results of R.~Carey and J.~Pincus on determinants and Tate tame symbols.
Additionally, we obtain variational formulas for joint torsion.
\end{abstract}

\subjclass[2010]{Primary 19C20; Secondary 47A13, 47A53, 47A60, 47B35}

\keywords{joint torsion; tame symbols; functional calculus, multivariable operator theory; determinants in K-theory}

\thanks{This work was completed as part of the author's doctoral dissertation at the University of Colorado, under the direction of Professor Alexander Gorokhovsky.}

\maketitle

\thispagestyle{empty}

\tableofcontents

\section{Introduction}

This paper was partly motivated by a desire to understand the works \cite{Joint, Perturbation}.  In particular, R.~Carey and J.~Pincus introduce a type of determinant known as the joint torsion $\tau(A,B)$ associated to any two Fredholm operators $A$ and $B$ which commute modulo the trace ideal $\mathcal L^1$ on a Hilbert space.  They use this invariant to obtain generalizations, for symbols with nonzero winding numbers, of Szeg\H{o}'s limit theorems on the asymptotics of determinants of Toeplitz operators.  It turns out \cite{JT-equals-DT} that joint torsion is equal to the determinant invariant of L.~Brown in algebraic $K$-theory \cite{Brown}.

In \cite{Kaad}, J.~Kaad generalizes the notion of joint torsion to commuting tuples of operators satisfying a natural Fredholm property.  J.~Kaad and R.~Nest have developed a theory of perturbation vectors associated to pairs of complexes which are perturbations of one another \cite{Kaad-Nest2}.  They have also investigated local indices of $n$-tuples of commuting operators under the holomorphic functional calculus \cite{Kaad-Nest}, 
and they obtain a global index theorem originally due to J.~Eschmeier and M.~Putinar \cite
{Eschmeier-Putinar-Book}.
In Section \ref{sec:functional-calculus} we 
establish a multiplicative analogue of such transformation rules in the case of single operators (Proposition \ref{prop:Eschmeier-Putinar-torsion}): the joint torsion $\tau(f(A), B)$ of $f(A)$ and $B$ is
\[
\prod_{ \{ \lambda \in \sigma(A) \, | \, f(\lambda) = 0 \} } \tau(A-\lambda, B)^{\ord_\lambda(f)} \cdot
\tau(q(A), B)
\]
Here $q(A)$ is an invertible operator, so the second factor is a type of multiplicative Lefschetz number.  In addition, we investigate variational formulas for joint torsion 
(Corollaries \ref{cor:variational-exponentials}, \ref{cor:variational-scalar-multiplication}, and \ref{cor:variational-powers}).

We recall the notion of a symbol in arithmetic \cite{Tate}, which is a bimultiplicative map $c(\cdot, \cdot)$ on the multiplicative group of a field such that 
$c(a, 1-a) = 1$.
An example of a symbol is the tame symbol on a field of meromorphic functions, defined as a weighted ratio of the functions (Definition \ref{def:tame-symbol}).
It turns out that this tame symbol is closely related to the Steinberg symbol in $K$-theory.  Indeed, in Section \ref{sec:Toeplitz-operators} we express the joint torsion of Toeplitz operators in terms of their tame symbols (Theorem \ref{thm:tame-symbols-L-infinty}).  This generalizes a result due to R.~Carey and J.~Pincus \cite
{Joint}: if $f,g \in H^\infty(S^1)$, then
\[
\tau(T_f, T_g) = \prod_{|a|<1} c_a(f,g)
\] 
Recently in \cite{Kaad-Nest3}, J.~Kaad and R.~Nest investigate the local behavior of joint torsion transition numbers associated to commuting tuples of operators.
They 
generalize the above Carey-Pincus formula and extend the notion of tame symbol to the setting of transversal functions on a complex analytic curve.

In \cite{Ehrhardt}, T.~Ehrhardt generalizes the Helton-Howe-Pincus formula by showing
\begin{equation} \label{eq:Ehrhardt}
e^A e^B - e^{A+B} \in \mathcal L^1
\end{equation}
whenever $[A,B] \in \mathcal L^1$, and moreover,
\[
\det \left( e^A e^B e^{-A-B} \right) = e^{\frac{1}{2} \tr [A,B]}
\]
Now let $P : L^2(S^1) \to H^2(S^1)$ be the orthogonal projection onto the Hardy space.  For any $\phi \in L^{\infty}(S^1)$ one may form the Toeplitz operator $T_\phi$, which is the compression to $H^2(S^1)$ of multiplication by $\phi$.  With $A = T_{(I-P) \phi}$ and $B = T_{P \phi}$, under suitable regularity assumptions, \eqref{eq:Ehrhardt} implies that
\begin{equation} \label{eq:Ehrhardt-Toeplitz}
T_{e^\phi} - e^{T_\phi} \in \mathcal L^1
\end{equation}

We investigate the following question: To what extent does \eqref{eq:Ehrhardt-Toeplitz} hold with the exponential replaced by more general functions?  In Section \ref{sec:Fredholm-modules} we consider entire functions in the more general setting of summable Fredholm modules.  Then in Section \ref{sec:Toeplitz-operators} we specialize to Toeplitz operators.  Theorem \ref{thm:holomorphic-functional-calculus-commutes} establishes \eqref{eq:Ehrhardt-Toeplitz} when
\begin{enumerate}
\item $f$ is holomorphic on a neighborhood of $\sigma(T_\phi)$, or
\item $\phi$ is real-valued and $f$ is $C^\infty$ on $\phi(S^1)$.
\end{enumerate}
Along the way, we investigate the functional calculus modulo ideals of compact operators.  Under suitable assumptions on $f$, we have
\begin{enumerate}
\item $f(A) - f(A') \in \mathcal L^p$ if $A-A' \in \mathcal L^p$ (Proposition \ref{prop:Lp-perturbation-holomorphic}).
\item $[f(A), B] \in \mathcal L^p$ if $[A,B] \in \mathcal L^p$ (see Proposition \ref{prop:commutator-trace-functional-calc}).
\item $T_{f(\phi)} - f(T_\phi) \in \mathcal L^p$ in a $2p$-summable Fredholm module (Proposition \ref{prop:L1-norm}).
\end{enumerate}
Thus we obtain functional calculi on the Calkin-type algebra $\mathcal B / \mathcal L^p$ of bounded operators $\mathcal B$ modulo the Schatten ideal $\mathcal L^p$ of compact operators with $p$-summable singular values.
Result (2) is due to A.~Connes \cite
{Connes}.  
We also obtain expressions for the trace and estimates on the $\mathcal L^p$-norms of operators as above.

Finally, we apply these results to obtain an integral formula for the joint torsion $\tau(T_f,T_g)$ of Toeplitz operators $T_f$ and $T_g$ as
\[
\exp \frac{1}{2\pi i} \left( \int_{S^1} \, \log f \, d(\log g) - \log g(p) \int_{S^1} \, d(\log f) \right)
\]
This formula was obtained by R.~Carey and J.~Pincus \cite
{Joint}, and previously by
J.~W.~Helton and R.~Howe \cite{Helton-Howe1} in an equivalent form.  See also \cite{Esnault-Viehweg}.


\subsection*{Acknowledgments}
This work benefited greatly from enlightening conversations with a number of people.  In particular I wish to thank
Richard Carey, 
Guillermo Corti\~{n}as,
Ra\'ul Curto, 
J\"{o}rg Eschmeier, 
Alexander Gorokhovsky, 
Nigel Higson, 
Jens Kaad, 
Jerry Kaminker, 
Matthias Lesch, 
Ryszard Nest,
Joel Pincus, 
and Mariusz Wodzicki.


\section{Preliminaries}  \label{sec:preliminaries}

\subsection{The determinant invariant}

For any unital ring $R$ and ideal $I$, there are algebraic $K$-groups $K_i(R)$, $K_i(R/I)$, and $K_i(R,I)$ that fit into Quillen's long exact sequence
\[
\dots \to K_{i+1}(R/I) \xrightarrow{\partial} K_i(R,I) \to K_i(R) \to K_i(R/I) \xrightarrow{\partial} \dots
\]
Any two commuting invertible elements $a,b \in R/I$ determine a Steinberg symbol (or Loday product, up to a sign) $\{a,b\} \in K_2(R/I)$.  Now let $R= \mathcal B = \mathcal B(H)$ be the algebra of bounded operators on a Hilbert space $H$, and let $I = \mathcal L^1 = \mathcal L^1(H)$ be the ideal of trace class operators on $H$.  Then the Fredholm determinant induces a map
\[
\det : K_1(\mathcal B, \mathcal L^1) \to \mathbf C^\times
\]
In fact, $K_1(\mathcal B, \mathcal L^1) = V \oplus \mathbf C^\times$ for a vector space $V$ with uncountable linear dimension, and $\det$ can be seen as the projection onto the second factor \cite{Anderson-Vaserstein}.  The following definition is due to L.~Brown \cite{Brown}:

\begin{definition}
Let $a,b \in \mathcal B / \mathcal L^1$ be invertible and commuting elements.  The determinant invariant $d(a,b)$ is
\[
d(a,b) = \det \partial_2 \{a,b\} \in \mathbf C^\times
\]
\end{definition}

The determinant of a multiplicative commutator remarkably depends only on the $K$-theory of the operators involved:

\begin{proposition}[\cite{Brown}]
If $A$ and $B$ are invertible operators with $[A,B] \in \mathcal L^1$, then
\[
\det(ABA^{-1}B^{-1}) = d(\pi(A), \pi(B) )
\]
where $\pi: \mathcal B \to \mathcal B / \mathcal L^1$ is the quotient map.
\end{proposition}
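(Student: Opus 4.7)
The plan is to unpack the definition $d(\pi(A), \pi(B)) = \det \partial_2 \{\pi(A), \pi(B)\}$ and to identify the relative $K_1$-class $\partial_2 \{\pi(A), \pi(B)\}$ with the class of the multiplicative commutator $ABA^{-1}B^{-1}$ in $K_1(\mathcal{B}, \mathcal{L}^1)$. Once this identification is in place, the equality follows by applying the Fredholm determinant to both sides.

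First I would verify that $ABA^{-1}B^{-1}$ is a sensible input for the Fredholm determinant, i.e.\ that it lies in $1 + \mathcal{L}^1$. Writing
\[
ABA^{-1}B^{-1} - 1 = (AB - BA) A^{-1}B^{-1} = [A,B] A^{-1} B^{-1},
\]
and using that $[A,B] \in \mathcal{L}^1$ together with the boundedness of $A^{-1}$ and $B^{-1}$, this is immediate. In particular $ABA^{-1}B^{-1}$ defines a class in $K_1(\mathcal{B}, \mathcal{L}^1)$ via the inclusion $1 + \mathcal{L}^1 \hookrightarrow \mathrm{GL}(\mathcal{B}, \mathcal{L}^1)$, and it is this inclusion that is inverted by the Fredholm determinant.

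Next I would invoke the standard description of the boundary map $\partial_2 : K_2(R/I) \to K_1(R,I)$ on Steinberg symbols: for any pair of commuting invertibles $a, b \in R/I$ and any choice of invertible lifts $A, B \in R$ with $[A, B] \in I$, one has
\[
\partial_2 \{a,b\} = [ABA^{-1}B^{-1}] \in K_1(R, I).
\]
Specializing to $R = \mathcal{B}$, $I = \mathcal{L}^1$, $a = \pi(A)$, $b = \pi(B)$, and applying $\det$ on both sides produces the asserted identity.

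The main obstacle is the second step: the $K$-theoretic identification of $\partial_2$ on Steinberg symbols. This rests on unwinding the definitions of the Steinberg group, Milnor's $K_2$, and the relative $K_1$ as a pullback, as well as the Loday product on commuting invertibles. None of this is specific to operators on Hilbert space; the operator-theoretic content of the proposition is confined to the elementary $\mathcal{L}^1$-verification of the first step. The remaining work is therefore to quote this standard algebraic computation from the literature on algebraic $K$-theory, or to reproduce it directly by computing $\partial_2$ on a representative of the Loday product.
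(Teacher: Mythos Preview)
The paper does not actually supply a proof of this proposition: it is stated with a citation to Brown's paper \cite{Brown} and no argument is given. So there is nothing in the paper to compare your attempt against beyond the attribution.

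That said, your outline is the standard way this result is established, and it is correct. The verification that $ABA^{-1}B^{-1} - 1 = [A,B]A^{-1}B^{-1} \in \mathcal{L}^1$ is exactly the operator-theoretic input needed, and the identification $\partial_2\{a,b\} = [ABA^{-1}B^{-1}]$ in $K_1(R,I)$ is the well-known algebraic fact (see, e.g., Rosenberg's book, which the paper cites elsewhere for related properties). Your assessment that the genuine content lies in the $K$-theory computation rather than in anything Hilbert-space-specific is accurate. One small remark: the hypothesis is that $A$ and $B$ are invertible in $\mathcal{B}$, so there is no need to produce invertible lifts---$A$ and $B$ themselves serve in that role, which is precisely why this proposition is the easy special case, as the paper's subsequent corollary (handling the general case via stabilization) makes clear.
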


In fact, the determinant invariant can always be calculated in terms of a multiplicative commutator.  To see this, let $a,b \in \mathcal B / \mathcal L^1$ be invertible commuting elements, and pick lifts $A$ and $B$ in $\mathcal B$ of $a$ and $b$.  Let $S_A$ be an operator with index opposite that of $A$.  For example, we may take $S_A$ to be a unilateral shift or a parametrix for $A$.  Pick $S_B$ similarly.  Then $A \oplus S_A \oplus I$ has index zero, so we may pick a finite rank operator $F_A$ such that $\tilde A = A \oplus S_A \oplus I + F_A$ is invertible.  Similarly for $\tilde B = B \oplus I \oplus S_B + F_B$.

\begin{corollary}
With $\tilde A$ and $\tilde B$ as above, we have
\[
d(a,b) = \det ( \tilde A \tilde B \tilde A^{-1} \tilde B^{-1} )
\]
\end{corollary}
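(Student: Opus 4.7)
The plan is to apply the preceding proposition to the invertible pair $(\tilde A, \tilde B)$ and then reduce the resulting determinant invariant to $d(a,b)$ using properties of the Steinberg symbol. The key observation is that the auxiliary factors $S_A \oplus I$ and $I \oplus S_B$ contribute only trivial symbols, so they do not affect the $K$-theoretic class.

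First I would verify that the Fredholm determinant on the right is defined. Since $F_A$ and $F_B$ are finite rank, it suffices to check that the commutator $[A \oplus S_A \oplus I, \, B \oplus I \oplus S_B]$ lies in $\mathcal L^1(H^3)$. A direct block computation yields $[A,B] \oplus 0 \oplus 0$, which is trace class by hypothesis, so $[\tilde A, \tilde B] \in \mathcal L^1$ and hence $\tilde A \tilde B \tilde A^{-1} \tilde B^{-1} - I \in \mathcal L^1$. The preceding proposition, applied on the enlarged Hilbert space $H^3$, then gives
\[
\det(\tilde A \tilde B \tilde A^{-1} \tilde B^{-1}) = d\bigl(\pi_3(\tilde A), \pi_3(\tilde B)\bigr),
\]
where $\pi_3$ denotes the quotient $\mathcal B(H^3) \to \mathcal B(H^3)/\mathcal L^1(H^3)$.

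Next I would identify the right-hand side with $d(a,b)$. The images $\pi_3(\tilde A)$ and $\pi_3(\tilde B)$ lie in the block-diagonal subalgebra isomorphic to three copies of $\mathcal B/\mathcal L^1$, with components $(a, \pi(S_A), 1)$ and $(b, 1, \pi(S_B))$ respectively. Since the Steinberg symbol is bimultiplicative and splits over direct products, and since $\{x,1\} = \{1,y\} = 0$ in $K_2$, one obtains
\[
\{\pi_3(\tilde A), \pi_3(\tilde B)\} = \{a,b\} + \{\pi(S_A), 1\} + \{1, \pi(S_B)\} = \{a,b\}
\]
under the natural stabilization $K_2(\mathcal B/\mathcal L^1) \cong K_2(\mathcal B(H^3)/\mathcal L^1(H^3))$. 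The principal obstacle is confirming that this stabilization intertwines both the boundary map $\partial_2$ and the Fredholm determinant; this follows because the trace on $\mathcal L^1(H)$ extends compatibly to $\mathcal L^1(H^3)$ via the canonical inclusion, so the determinant is unchanged under padding by the identity. Granting this compatibility, applying $\partial_2$ and $\det$ yields $d(\pi_3(\tilde A), \pi_3(\tilde B)) = d(a,b)$, completing the argument.
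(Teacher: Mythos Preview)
Your proposal is correct and follows exactly the approach the paper intends: the corollary is stated without proof because it is meant to be an immediate application of the preceding proposition on the enlarged space $H^3$, together with the observation that the extra diagonal blocks contribute trivial Steinberg symbols. You have simply made explicit the $K$-theoretic bookkeeping (splitting over the block-diagonal subalgebra, vanishing of $\{x,1\}$ and $\{1,y\}$, and compatibility of $\partial_2$ and $\det$ with stabilization) that the paper leaves to the reader.
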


\subsection{Joint torsion}

In \cite{Joint}, R.~Carey and J.~Pincus introduce a notion of determinant known as joint torsion $\tau(A,B)$ associated to any pair of commuting Fredholm operators $A$ and $B$.  This invariant is defined as follows: the operator $A$ induces a morphism of the Koszul complex
\[
K_\bullet(B): \quad 0 \to H \xrightarrow{B} H \to 0
\]
The mapping cone $C(A)$ is identified with the joint Koszul complex $K_\bullet(A,B)$.  
This forms an exact triangle of complexes
\[
K_\bullet(B) \to K_\bullet(B) \to K_\bullet(A,B) \to
\]
and hence a long exact sequence $\mathcal E_A$ in homology.  By switching the roles of $A$ and $B$, we obtain a long exact sequence $\mathcal E_B$.

If $V$ is a finite dimensional vector space, let $\det V = \Lambda^{\dim V} V$.  
Associated to any exact sequence of finite dimensional vector spaces
\[
V: \quad 0 \to V_0 \to V_{1} \to \cdots V_n \to 0
\]
there is a canonical volume element
\[
\tau(V) \in \left( \det V_0^* \otimes \det V_1 \otimes \cdots \right)
\]
Using the canonical identification
\[
\det V \otimes \det V^* \cong \mathbf C
\]
we obtain the joint torsion $\tau(A,B)$, up to a sign, by comparing:
\[
\tau(\mathcal E_A) \otimes \tau(\mathcal E_B)^* \in \mathbf C^\times
\]
See \cite{Kaad-Nest3} for a discussion of joint torsion more generally for commuting morphisms of complexes.

In \cite{Perturbation}, joint torsion is extended to the situation when $A$ and $B$ do not necessarily commute, but satisfy $[A,B] \in \mathcal L^1$.
If $a,b \in \mathcal B / \mathcal L^1$ are invertible commuting elements, then by 
\cite{Eschmeier}, there exist lifts $A, D \in \mathcal B$ of $a$ and $B, C \in \mathcal B$ of $b$ such that
\[
AB=CD
\]
One may proceed as before and define long exact sequences
$\mathcal E_{A,D}$ and $\mathcal E_{B,C}$.
In this case however,
\[
\tau(\mathcal E_{A,D}) \otimes \tau(\mathcal E_{B,C})^* \in \det H(A) \otimes \det H(D)^* \otimes \det H(B)^* \otimes \det H(C)
\]
To obtain a scalar, Carey and Pincus introduce perturbation vectors
$\sigma_{A,D}$ and $\sigma_{B,C}$, 
which are canonical generators of the determinant lines, respectively,
\[
\det H(A) \otimes \det H(D)^*
\quad \text{and} \quad
\det H(B)^* \otimes \det H(C)
\]
We then obtain the joint torsion $\tau(A,B,C,D)$, up to a sign, by comparing:
\[
\tau(\mathcal E_{A,D}) \otimes \tau(\mathcal E_{B,C})^* 
\otimes \sigma_{A,D} \otimes \sigma_{B,C}
\in \mathbf C^\times
\]

Since joint torsion is equal to the determinant invariant \cite{JT-equals-DT}, 
we may write $\tau(A,B) = \tau(A',B',C',D')$, independent of choices of $A',B',C',D'$.  Moreover we have:

\begin{proposition} \label{prop:torsion-is-continuous}
Joint torsion is a continuous map into $\mathbf C$ from the space
\[
M = \{ (A,B) \, | \, A \text{ and } B \text{ are Fredholm and } [ A,B ] \in \mathcal L^1 \}
\]
endowed with the complete metric
\[
d((A_1, B_1),(A_2, B_2)) = \| A_1 - A_2 \| + \| B_1 - B_2 \| + \| [A_1, B_1] - [A_2, B_2] \|_1
\]
\end{proposition}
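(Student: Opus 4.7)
The plan is to invoke the corollary immediately preceding the proposition: for any $(A,B) \in M$, one has $\tau(A,B) = \det(\tilde A \tilde B \tilde A^{-1} \tilde B^{-1})$, where $\tilde A = A \oplus S_A \oplus I + F_A$ and $\tilde B = B \oplus I \oplus S_B + F_B$ are invertible enlargements on $H \oplus H \oplus H$. Since the Fredholm determinant is continuous on $I + \mathcal L^1$ in the trace norm, it will suffice to show that $\tilde A \tilde B \tilde A^{-1} \tilde B^{-1} - I$ depends trace-norm continuously on $(A,B)$ in a $d$-neighborhood of a fixed $(A_0, B_0) \in M$.

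The first step is to make the enlargement construction uniform. Since the Fredholm index is locally constant in operator norm, the parametrices $S_{A_0}, S_{B_0}$ chosen for $(A_0, B_0)$ remain valid for all nearby $A, B$; and since invertibility is an open condition, the same finite rank perturbations $F_A, F_B$ continue to make $\tilde A, \tilde B$ invertible on a (possibly smaller) neighborhood. On this neighborhood, $\tilde A, \tilde B$ depend norm-continuously on $(A,B)$, and hence by continuity of inversion at invertibles, so do $\tilde A^{-1}$ and $\tilde B^{-1}$.

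The heart of the argument is to expand $[\tilde A, \tilde B]$ as the direct sum $[A,B] \oplus 0 \oplus 0$ plus cross terms of the form $[D_A, F_B] + [F_A, D_B] + [F_A, F_B]$, with $D_A = A \oplus S_{A_0} \oplus I$ and $D_B = B \oplus I \oplus S_{B_0}$. Each cross term has rank bounded in terms of $\mathrm{rank}(F_A)$ and $\mathrm{rank}(F_B)$, independent of $(A,B)$. On operators with a uniform rank bound the trace norm is majorized by a constant times the operator norm, so the cross terms vary trace-norm-continuously with $(A,B)$. Combined with the explicit trace-norm term $\|[A,B] - [A_0, B_0]\|_1$ in the definition of $d$, this gives $[\tilde A, \tilde B] \to [\tilde A_0, \tilde B_0]$ in $\mathcal L^1$. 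The identity $\tilde A \tilde B \tilde A^{-1} \tilde B^{-1} - I = [\tilde A, \tilde B]\, \tilde A^{-1} \tilde B^{-1}$ and the H\"older-type bound $\|XY\|_1 \leq \|X\|_1 \|Y\|$ then yield trace-norm continuity of the multiplicative commutator, so continuity of $\tau$ follows by continuity of $\det$.

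The main obstacle is the trace-norm control of the cross terms: one has no continuous dependence of $F_A, F_B$ on $(A,B)$, only uniform bounds on their ranks, and it is the equivalence of operator and trace norms on fixed-rank operators that carries the argument. Completeness of $d$ follows routinely from the completeness of $\|\cdot\|$ on $\mathcal B$ and of $\|\cdot\|_1$ on $\mathcal L^1$, noting that the two limits of $[A_n, B_n]$ (in operator norm and in trace norm) must coincide as elements of $\mathcal B$.
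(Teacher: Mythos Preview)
The paper does not supply a proof of this proposition; it is stated immediately after the identification of joint torsion with Brown's determinant invariant (via \cite{JT-equals-DT}) and is evidently taken as a known consequence. So there is no ``paper's own proof'' to compare against.

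Your approach is sound and is the natural one given the material in Section~2.1: reduce to continuity of $\det(\tilde A\tilde B\tilde A^{-1}\tilde B^{-1})$ by freezing the auxiliary data $S_{A_0},S_{B_0},F_{A_0},F_{B_0}$ on a $d$-neighborhood, then control $[\tilde A,\tilde B]$ in $\mathcal L^1$ using the decomposition into $[A,B]\oplus 0\oplus 0$ plus finite-rank cross terms. The identity $\tilde A\tilde B\tilde A^{-1}\tilde B^{-1}-I=[\tilde A,\tilde B]\,\tilde A^{-1}\tilde B^{-1}$ together with norm-continuity of inversion and continuity of $\det$ on $I+\mathcal L^1$ then finishes the argument.

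One point of internal confusion: in your final paragraph you describe the ``main obstacle'' as the lack of continuous dependence of $F_A,F_B$ on $(A,B)$. But in your own setup you have already fixed $F_A=F_{A_0}$ and $F_B=F_{B_0}$ throughout the neighborhood, so there is nothing to worry about---these operators are constant. The actual content is simply that for a \emph{fixed} finite-rank $F$, the map $D\mapsto [D,F]$ is continuous from $(\mathcal B,\|\cdot\|)$ to $(\mathcal L^1,\|\cdot\|_1)$, since $\|[D-D_0,F]\|_1\le 2\,\mathrm{rank}(F)\,\|D-D_0\|\,\|F\|$. I would rephrase that paragraph accordingly; as written it suggests a difficulty that your earlier step has already dissolved.
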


\subsection{Properties of joint torsion}

In this section we record a number of properties of joint torsion for later use.  
The following result expresses joint torsion as a multiplicative Lefschetz number.  This follows quickly from the definitions \cite{JT-equals-DT}.  See \cite{Reciprocity} for an earlier result on the determinant invariant.

\begin{lemma} \label{lemma:multiplicative-Lefschetz}
If $A$ and $B$ are commuting Fredholm operators with vanishing Koszul homology, then
\[
\tau(A,B) = \frac{\det B|_{\ker A}}{\det B|_{\coker A}} \frac{\det A|_{\coker B}}{\det A|_{\ker B}}
\]
\end{lemma}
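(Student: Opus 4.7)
The plan is to compute each of $\tau(\mathcal E_A)$ and $\tau(\mathcal E_B)$ directly from the definition, using the vanishing of the joint Koszul homology to drastically simplify the long exact sequences.

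First I would observe that since $[A,B]=0$, the operator $A$ preserves $\ker B$ and descends to an endomorphism of $\coker B$; symmetrically for $B$. The long exact sequence $\mathcal E_A$ coming from the exact triangle $K_\bullet(B) \xrightarrow{A} K_\bullet(B) \to K_\bullet(A,B)$ reads
\[
\cdots \to H_i(K_\bullet(B)) \xrightarrow{A_\ast} H_i(K_\bullet(B)) \to H_i(K_\bullet(A,B)) \to H_{i-1}(K_\bullet(B)) \to \cdots
\]
The hypothesis $H_\ast(K_\bullet(A,B)) = 0$ forces every connecting morphism to vanish, so $\mathcal E_A$ decouples into two independent 2-term exact sequences
\[
0 \to \ker B \xrightarrow{A|_{\ker B}} \ker B \to 0, \qquad 0 \to \coker B \xrightarrow{A|_{\coker B}} \coker B \to 0,
\]
and in particular both restricted/induced maps are automorphisms. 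The corresponding reduction of $\mathcal E_B$ gives that $B|_{\ker A}$ and $B|_{\coker A}$ are automorphisms.

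Second, I would compute the torsion of each piece. For a 2-term exact sequence $0 \to V \xrightarrow{T} V \to 0$, the canonical torsion element equals $\det T$ under the pairing $\det V^\ast \otimes \det V \cong \mathbf C$. Accounting for the alternating sign convention with respect to Koszul degree, this computation yields
\[
\tau(\mathcal E_A) \;=\; \frac{\det(A|_{\coker B})}{\det(A|_{\ker B})}, \qquad \tau(\mathcal E_B) \;=\; \frac{\det(B|_{\coker A})}{\det(B|_{\ker A})},
\]
up to a sign that I will not track explicitly. By the definition of joint torsion recalled above, $\tau(A,B)$ is obtained from the comparison $\tau(\mathcal E_A)\otimes\tau(\mathcal E_B)^\ast \in \mathbf C^\times$, which in this commuting setting is just the ratio $\tau(\mathcal E_A)/\tau(\mathcal E_B)$. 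Multiplying through gives the stated formula.

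The main obstacle is the sign bookkeeping: the identification $\det V^\ast \otimes \det V \cong \mathbf C$, the alternating tensor structure over Koszul degree, and the dualization in $\tau(\mathcal E_B)^\ast$ each introduce sign ambiguities. Since the torsion itself is defined only up to a sign, it suffices to verify that the magnitudes match and that the numerator/denominator assignments are as claimed; the careful part is making sure the $\coker$ factor for $A$ ends up in the numerator while the $\coker$ factor for $B$ ends up in the denominator, which is exactly the effect of dualizing $\tau(\mathcal E_B)$.
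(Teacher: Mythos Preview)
Your proposal is correct and is exactly the unpacking the paper has in mind: the paper does not write out a proof at all, saying only that the formula ``follows quickly from the definitions'' with a citation, and your computation of $\tau(\mathcal E_A)$ and $\tau(\mathcal E_B)$ after the long exact sequences collapse under the hypothesis $H_\ast(K_\bullet(A,B))=0$ is precisely that quick derivation. Your acknowledged sign ambiguity is consistent with the paper's own convention that $\tau(A,B)$ is defined only up to sign.
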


\begin{lemma} \label{lemma:torsion-of-exponentials}
If $[A, B] \in \mathcal L^1$, then
\[
\tau(e^A, e^B) = e^{\tr [A, B]}
\]
\end{lemma}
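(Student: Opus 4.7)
The plan is to realize $\tau(e^A, e^B)$ as an ordinary Fredholm determinant and then evaluate it by applying Ehrhardt's formula \eqref{eq:Ehrhardt} twice.

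First I would verify that $\tau(e^A, e^B)$ is even defined. The operators $e^A$ and $e^B$ are invertible, hence Fredholm, and subtracting two instances of \eqref{eq:Ehrhardt} — applied to $(A,B)$ and to $(B,A)$ — shows that
\[
e^A e^B - e^B e^A = (e^A e^B - e^{A+B}) - (e^B e^A - e^{A+B}) \in \mathcal L^1.
\]
Since joint torsion equals L.~Brown's determinant invariant~\cite{JT-equals-DT}, the proposition of Brown recalled above specializes, both operators being invertible, to
\[
\tau(e^A, e^B) = \det\bigl(e^A e^B e^{-A} e^{-B}\bigr).
\]

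The main computation is to factor
\[
e^A e^B e^{-A} e^{-B} = \bigl(e^A e^B e^{-A-B}\bigr) \cdot \bigl(e^{A+B} e^{-A} e^{-B}\bigr)
\]
and evaluate each factor. Ehrhardt's formula applied to $(A,B)$ shows that the left factor lies in $1 + \mathcal L^1$ with determinant $e^{\frac{1}{2}\tr[A,B]}$. For the right factor, I would apply \eqref{eq:Ehrhardt} with $(-A,-B)$ in place of $(A,B)$ — noting $[-A,-B]=[A,B]$ — and then invoke cyclic invariance of the Fredholm determinant ($\det(XY) = \det(YX)$ whenever $XY - I$ and $YX - I$ lie in $\mathcal L^1$) to obtain determinant $e^{\frac{1}{2}\tr[A,B]}$ once again. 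Multiplicativity of $\det$ on $1 + \mathcal L^1$ then yields $e^{\tr[A,B]}$.

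The only subtle point is checking that both factors of the decomposition sit in $1 + \mathcal L^1$, so that multiplicativity of $\det$ applies across the product. This is immediate from \eqref{eq:Ehrhardt}: multiplying the trace-class perturbation $e^A e^B - e^{A+B}$ on the right by the bounded operator $e^{-A-B}$ leaves it in $\mathcal L^1$, and symmetrically for the other factor using the analogous Ehrhardt perturbation $e^{-A}e^{-B} - e^{-A-B}$. I expect no conceptual obstacle beyond this bookkeeping — all of the substantive analytic content is already packaged into Ehrhardt's theorem and into the identification of joint torsion with the determinant invariant.
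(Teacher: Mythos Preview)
Your proof is correct and follows the same strategy as the paper: identify $\tau(e^A, e^B)$ with the Fredholm determinant of the multiplicative commutator $e^A e^B e^{-A} e^{-B}$ via the determinant invariant, and then evaluate that determinant by citing a known formula. The only difference is that the paper invokes the Helton--Howe--Pincus formula directly for $\det(e^A e^B e^{-A} e^{-B}) = e^{\tr[A,B]}$, whereas you route through Ehrhardt's refinement \eqref{eq:Ehrhardt} and a factorization into two pieces --- a slightly longer path to the same destination, since Ehrhardt's theorem already subsumes Helton--Howe--Pincus.
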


\begin{proof}
In this case,
\[
\tau(e^A, e^B) = \det \left( e^A e^B e^{-A} e^{-B} \right)
\]
and the result follows by the Helton-Howe-Pincus formula.
\end{proof}


It is convenient to state the following variational formula using the logarithmic derivative.  Thus
$\frac{d}{dz} \log u$ 
should be interpreted as
$u^{-1} \frac{d}{dz} u$.

\begin{corollary} \label{cor:variational-exponentials}
Suppose $A(z)$ is a differentiable family of operators such that
$[A(z), B] \in \mathcal L^1$ for every $z$.  
If in addition $[A(z), B]$ is differentiable in $\mathcal L^1$, then
\[
\frac{d}{dz} \log \tau(e^{A(z)}, e^B)
= \log \tau (e^{\frac{d}{dz} A(z)}, e^B)
\]
\end{corollary}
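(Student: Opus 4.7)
The plan is to reduce the identity immediately to the Helton–Howe–Pincus formula as packaged in Lemma \ref{lemma:torsion-of-exponentials}, after which the corollary becomes essentially a statement about differentiation of traces of $\mathcal L^1$-valued curves.

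First I would apply Lemma \ref{lemma:torsion-of-exponentials} at each point to rewrite the left-hand side: since $[A(z),B] \in \mathcal L^1$, we have
\[
\tau\bigl(e^{A(z)}, e^B\bigr) = e^{\tr [A(z), B]}.
\]
Interpreting the logarithmic derivative as prescribed in the paragraph preceding the statement, this gives
\[
\frac{d}{dz} \log \tau\bigl(e^{A(z)}, e^B\bigr) = \frac{d}{dz} \tr [A(z), B].
\]
Next I would push the derivative inside the trace. This is the only substantive step: by hypothesis $z \mapsto [A(z), B]$ is differentiable as a map into $\mathcal L^1$, and since $B$ is constant its $\mathcal L^1$-derivative is $[\tfrac{d}{dz} A(z), B]$. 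Because the trace is continuous on $\mathcal L^1$, it commutes with the limit defining the derivative, yielding
\[
\frac{d}{dz} \tr [A(z), B] = \tr \bigl[\tfrac{d}{dz} A(z), B\bigr].
\]

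Finally I would apply Lemma \ref{lemma:torsion-of-exponentials} once more, this time to the pair $(\tfrac{d}{dz} A(z), B)$, whose commutator lies in $\mathcal L^1$ as the $\mathcal L^1$-derivative of $[A(z),B]$. With the natural branch of the logarithm, this gives
\[
\log \tau\bigl(e^{\tfrac{d}{dz} A(z)}, e^B\bigr) = \tr \bigl[\tfrac{d}{dz} A(z), B\bigr],
\]
matching the expression obtained above and completing the proof.

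The only genuine subtlety is the interchange of $\tfrac{d}{dz}$ and $\tr$, which is where the $\mathcal L^1$-differentiability hypothesis is used essentially; the rest is bookkeeping. Note also that no ambiguity of branch arises, because the logarithmic derivative on the left is defined intrinsically via $u^{-1} u'$, while on the right the expression $e^{\tr[\cdot,\cdot]}$ picks out a canonical branch.
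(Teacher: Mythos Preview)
Your proof is correct and is exactly the argument the paper has in mind: the corollary is stated without proof immediately after Lemma~\ref{lemma:torsion-of-exponentials}, and your reduction---apply the lemma, differentiate the trace using the $\mathcal L^1$-differentiability hypothesis, then apply the lemma again---is the intended one-line derivation.
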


\begin{lemma} \label{lemma:torsion-Steinberg-etc}
Whenever the following joint torsion numbers are defined, we have:
\begin{enumerate}
\item $\tau(A,B_1 B_2) = \tau(A,B_1) \cdot \tau(A,B_2)$
\item $\tau(A,I) = 1$
\item $\tau(A,B)^{-1} = \tau(B,A)$
\item $\tau(A, I-A) = 1$
\item $\tau(A,-A) = 1$
\item $\overline{\tau(A,B)} = \tau(A^*, B^*)^{-1}$
\item $\tau(A,B^{-1}) = \tau(A,B)^{-1}$
\item $\tau(A,A) = (-1)^{\ind \, A}$
\end{enumerate}
\end{lemma}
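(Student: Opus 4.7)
The plan is to reduce each identity to a corresponding statement about the Steinberg symbol $\{\pi(A), \pi(B)\} \in K_2(\mathcal B / \mathcal L^1)$, using the identification $\tau(A,B) = d(\pi(A), \pi(B))$ from \cite{JT-equals-DT}. When both $A$ and $B$ are invertible, $\tau(A,B) = \det(ABA^{-1}B^{-1})$; otherwise we pass to the invertible stabilizations $\tilde A, \tilde B$ of the preceding corollary, so that $\tau(A,B) = \det(\tilde A \tilde B \tilde A^{-1} \tilde B^{-1})$.

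With this dictionary, properties (1)--(3) and (7) are immediate consequences of the algebraic structure of the Steinberg symbol under $\det \partial_2$: bimultiplicativity gives (1), and combined with the normalization $\{a,1\} = 0$ it also yields (7); the normalization alone gives (2); the skew-symmetry $\{a,b\} = -\{b,a\}$ gives (3). Property (4) is exactly the Steinberg relation $\{a, 1-a\} = 0$, which applies whenever both $A$ and $I - A$ are Fredholm.

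For (5) I would exploit the algebraic identity $-a = (1-a)(1-a^{-1})^{-1}$, valid for any invertible $a$; together with (1), (4), and (7) this expresses $\tau(A, -A)$ as a product of two Steinberg relations, one for $A$ and one for $A^{-1}$, each equal to $1$. For (6), choose invertible stabilizations $\tilde A, \tilde B$ and observe that $(\tilde A \tilde B \tilde A^{-1} \tilde B^{-1})^*$ coincides with the multiplicative commutator of $\tilde B^{*-1}$ and $\tilde A^{*-1}$. Combined with $\det(X^*) = \overline{\det X}$ and properties (3) and (7), this produces the claimed relation, although the sign and adjoint bookkeeping here is the delicate point and will require a small direct computation.

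Finally, for (8), use (1) and (5) to obtain $\tau(A, A) = \tau(A, -I)^{-1}$. Since $-I$ is invertible, $A$ and $-I$ commute exactly and the joint Koszul complex $K_\bullet(A, -I)$ is acyclic; Lemma \ref{lemma:multiplicative-Lefschetz} therefore applies and gives
\[
\tau(A, -I) = \frac{\det(-I)|_{\ker A}}{\det(-I)|_{\coker A}} = (-1)^{\dim \ker A - \dim \coker A} = (-1)^{\ind A},
\]
from which $\tau(A, A) = (-1)^{\ind A}$ follows. The main obstacle throughout is the adjoint bookkeeping in (6); the remaining items reduce cleanly to the Steinberg symbol formalism once invertible stabilizations have been fixed.
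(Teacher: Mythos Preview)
Your plan for (1)--(7) matches the paper's approach: the paper simply cites the standard Steinberg symbol identities (Rosenberg, Lemma 4.2.14 and Theorem 4.2.17) for (1)--(6), and derives (7) from (1) and (2) exactly as you do. Your explicit commutator calculation for (6) is just the concrete verification of what the citation covers, so nothing is really different there.

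Where you genuinely diverge is (8). The paper argues directly from the definition of joint torsion: when $A=B$ the two long exact sequences $\mathcal E_A$ and $\mathcal E_B$ coincide, so $\tau(\mathcal E_A)\otimes\tau(\mathcal E_B)^*$ is trivial and only the sign $(-1)^{\nu(A,A)}$ survives, with $\nu(A,A)=\ind A$. Your route instead stays at the level of the already-established identities: from (1) and (5) you get $\tau(A,A)=\tau(A,-I)^{-1}$, and then Lemma~\ref{lemma:multiplicative-Lefschetz} (applicable since $-I$ is invertible, hence the Koszul homology of $(A,-I)$ vanishes) evaluates $\tau(A,-I)=(-1)^{\ind A}$. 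This is a clean, self-contained alternative that avoids unpacking the sign convention $\nu$ in the original construction; the price is that it relies on the Lefschetz lemma, whereas the paper's argument is closer to first principles. Both are correct.
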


\begin{proof}
Properties (1)-(6) follow from the corresponding properties of the determinant invariant.  See for instance Lemma 4.2.14 and Theorem 4.2.17 of \cite{Rosenberg}.  
Property (7) follows from (1) and (2).  
To verify (8), notice that the two torsion factors in the definition of joint torsion are the same.  Thus we are left with $(-1)^{\nu(A,A)}$, where $\nu(A,A)$ is the sign in the definition of joint torsion.  The result follows since $\nu(A,A) = \ind \, A$.
\end{proof}

\begin{lemma} \label{lemma:values-of-joint-torsion}
Whenever the following joint torsion numbers are defined, we have:
\begin{enumerate}
\item $\tau(A,A^*) \in \mathbf R$.
\item If $A$ and $B$ are self-adjoint, then $|\tau(A,B)| = 1$.
\item If $B$ is an idempotent, i.e.~$B^2=B$, then $\tau (A,B) = 1$.
\item If $A$ is self-adjoint and $B$ is a partial isometry, then $\tau(A,B) \in \mathbf R$.
\item If $A$ and $B$ are partial isometries, then $|\tau(A,B)| = 1$.
\end{enumerate}
\end{lemma}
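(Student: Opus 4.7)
The plan is to derive all five parts from the formal identities collected in Lemma \ref{lemma:torsion-Steinberg-etc}. The workhorses are bimultiplicativity $\tau(A, B_1 B_2) = \tau(A, B_1)\tau(A, B_2)$, antisymmetry $\tau(A,B)^{-1} = \tau(B,A)$ (which also yields bimultiplicativity in the first slot), and the conjugation formula $\overline{\tau(A,B)} = \tau(A^*, B^*)^{-1}$. In each part the strategy is to find a natural factorization or symmetry that, combined with these three identities, collapses the expression.

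I would prove (3) first because it serves as a lemma for (4) and (5). If $B$ is a Fredholm idempotent, then $B^2 = B$ and bimultiplicativity yields $\tau(A,B) = \tau(A, B \cdot B) = \tau(A,B)^2$, so $\tau(A,B) = 1$. Part (2) then follows at once from the conjugation formula: if $A$ and $B$ are self-adjoint, then $\overline{\tau(A,B)} = \tau(A,B)^{-1}$, hence $|\tau(A,B)| = 1$. Part (1) combines the conjugation formula with antisymmetry to give $\overline{\tau(A, A^*)} = \tau(A^*, A)^{-1} = \tau(A, A^*)$, so $\tau(A, A^*) \in \mathbf R$.

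For (4) and (5) the key observation is that a Fredholm partial isometry $B$ makes $B^*B$ and $BB^*$ into orthogonal projections differing from $I$ by the finite-rank projections onto $\ker B$ and $\ker B^*$; in particular both are Fredholm idempotents. Part (3) and bimultiplicativity in the second slot therefore give
\[
1 \;=\; \tau(A, B^*B) \;=\; \tau(A, B^*)\, \tau(A, B),
\]
so $\tau(A, B^*) = \tau(A, B)^{-1}$. For (4) this combines with self-adjointness of $A$ and the conjugation formula to yield $\overline{\tau(A,B)} = \tau(A, B^*)^{-1} = \tau(A,B)$, i.e., $\tau(A,B) \in \mathbf R$. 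For (5) the same device is applied in the first slot, using antisymmetry to transport bimultiplicativity there: $\tau(AA^*, B) = 1$ (by (3) after reversing the slots) factors as $\tau(A, B)\tau(A^*, B) = 1$, so $\tau(A^*, B) = \tau(A, B)^{-1}$. Replacing $B$ by $B^*$ gives $\tau(A^*, B^*) = \tau(A, B^*)^{-1} = \tau(A, B)$, and the conjugation formula then produces $\overline{\tau(A, B)} = \tau(A, B)^{-1}$, whence $|\tau(A, B)| = 1$.

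The main obstacle, such as it is, is bookkeeping: one must verify at each step that the auxiliary joint torsions $\tau(A, B^*B)$, $\tau(AA^*, B)$, $\tau(A^*, B)$, $\tau(A, B^*)$, and $\tau(A^*, B^*)$ are actually defined, i.e., that the relevant operators are Fredholm and the relevant commutators lie in $\mathcal L^1$. Fredholmness is immediate because $B^*B$ and $BB^*$ are finite-rank perturbations of $I$ and adjoints of Fredholm operators are Fredholm; the trace-class commutator conditions are subsumed by the standing phrase ``whenever the following joint torsion numbers are defined.'' No machinery beyond Lemma \ref{lemma:torsion-Steinberg-etc} is required.
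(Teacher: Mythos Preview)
Your proof is correct and follows essentially the same route as the paper's: parts (1)--(3) are identical, and parts (4)--(5) use the same idea of combining (3) applied to the projection $B^*B$ (or $AA^*$) with bimultiplicativity and the conjugation formula. The only cosmetic difference is that the paper packages the intermediate step as a single identity $\tau(T^*,B) = \overline{\tau(T,B)}$ valid for any $T$ commuting with the partial isometry $B$ modulo $\mathcal L^1$, and then applies it once in each variable for (5), whereas you redo the factorization separately in each slot; the algebra is the same. One small point worth making explicit (the paper asserts it without proof, and you defer to the standing hypothesis): if $B$ is a Fredholm partial isometry and $[T,B]\in\mathcal L^1$, then $[T,B^*]\in\mathcal L^1$ as well, since $[T,B^*]BB^* = [T,B^*B]B^* - B^*[T,B]B^* \in \mathcal L^1$ and $I-BB^*$ is finite rank.
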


\begin{proof} \leavevmode
\begin{enumerate}
\item By properties (6) and (3) of Lemma \ref{lemma:torsion-Steinberg-etc},
\[
\overline{\tau(A,A^*)} = \tau (A^*,A)^{-1} = \tau(A,A^*)
\]

\item Since $A$ and $B$ are self-adjoint, Lemma \ref{lemma:torsion-Steinberg-etc}(6) implies that
\[
\overline{\tau(A,B)} = \tau(A,B)^{-1}
\]

\item By Lemma \ref{lemma:torsion-Steinberg-etc}(1), $\tau(A,B) = \tau(A,B)^2$, and the result follows since joint torsion is nonzero.

\item First let $T$ be any Fredholm operator which commutes with $B$ modulo $\mathcal L^1$.  Since $B$ is a Fredholm partial isometry, $T$ also commutes with $B^*$ modulo $\mathcal L^1$.  Since $B^*B$ is a projection, (3) implies that
\[
\tau(T^*, B^*) \cdot \tau(T^*,B) = \tau (T^*,B^*B) = 1
\]
so by Lemma \ref{lemma:torsion-Steinberg-etc}(6),
\begin{equation} \label{partial-isom-torsion}
\tau(T^*,B) = \overline{\tau(T,B)}
\end{equation}
The result follows by setting $T=A$ since $A^*=A$.

\item Applying \eqref{partial-isom-torsion} to both $A$ and $B$ yields
\[
\tau(A,B) = \tau(A^*,B^*)
\]
and the result follows by Lemma \ref{lemma:torsion-Steinberg-etc}(6). \qedhere
\end{enumerate}
\end{proof}

In (4), if $A$ is in fact positive, we will use the behavior of joint torsion under the functional calculus to show that $\tau(A,B) >0$ (Proposition \ref{prop:partial-isom-positive-torsion}).

\begin{lemma}
If $A$ and $B$ are commuting Fredholm operators, then for any $\lambda \ne 0$, 
\[
\tau(A, \lambda B) = \lambda^{\ind \, A - \dim H_0 + \dim H_2} \tau(A,B)
\]
where $H_0 = H_0(A,B)$ and $H_2 = H_2(A,B)$ are the joint Koszul homology spaces.
\end{lemma}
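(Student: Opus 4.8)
The plan is to reduce the statement to a computation of how the canonical torsion elements and perturbation vectors scale when $B$ is replaced by $\lambda B$, using the multiplicative structure already available. First I would invoke Lemma \ref{lemma:torsion-Steinberg-etc}(1), which gives $\tau(A, \lambda B) = \tau(A, \lambda I) \cdot \tau(A, B)$; this isolates the problem to computing $\tau(A, \lambda I)$, where $\lambda I$ is an invertible operator (so $B_2 = \lambda I$ has trivial Koszul homology and commutes with everything). Thus the entire content is the identity $\tau(A, \lambda I) = \lambda^{\ind A - \dim H_0 + \dim H_2}$.

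To evaluate $\tau(A, \lambda I)$, I would apply Lemma \ref{lemma:multiplicative-Lefschetz} in the form appropriate to $\tau(\lambda I, A)$ — note $\lambda I$ and $A$ commute — together with Lemma \ref{lemma:torsion-Steinberg-etc}(3) to flip the arguments. For the pair $(\lambda I, A)$ the Koszul complex of $\lambda I$ is exact (since $\lambda \ne 0$), so the joint Koszul homology $H_\bullet(\lambda I, A)$ vanishes and Lemma \ref{lemma:multiplicative-Lefschetz} applies directly:
\[
\tau(\lambda I, A) = \frac{\det A|_{\ker \lambda I}}{\det A|_{\coker \lambda I}} \cdot \frac{\det \lambda I|_{\coker A}}{\det \lambda I|_{\ker A}}.
\]
Since $\ker \lambda I = \coker \lambda I = 0$, the first fraction is trivial, and the second is $\lambda^{\dim \coker A - \dim \ker A} = \lambda^{-\ind A}$. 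Hence $\tau(A, \lambda I) = \tau(\lambda I, A)^{-1} = \lambda^{\ind A}$.

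This gives $\tau(A, \lambda B) = \lambda^{\ind A}\,\tau(A,B)$, which matches the claimed formula only when $\dim H_0 = \dim H_2$ — so the main obstacle is that the naive multiplicativity argument does not see the homology correction terms. The resolution is that Lemma \ref{lemma:torsion-Steinberg-etc}(1) as stated presumes the joint torsion numbers are normalized by perturbation vectors, and scaling $B$ by $\lambda$ changes the perturbation vector $\sigma_{B,C}$ (equivalently, alters the determinant line $\det H(B)^* \otimes \det H(C)$ by a factor depending on $\dim H_\bullet(A,B)$); alternatively, if one computes $\tau(A,B)$ directly from $\tau(\mathcal E_A) \otimes \tau(\mathcal E_B)^*$ in the commuting case, the long exact sequences $\mathcal E_A$ and $\mathcal E_B$ each acquire an explicit $\lambda$-factor from the differentials $B \mapsto \lambda B$ appearing in them, and the mismatch between these factors is precisely $\lambda^{-\dim H_0 + \dim H_2}$. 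So the real work is: (i) write down $\mathcal E_A$ and $\mathcal E_B$ for the pair $(A, \lambda B)$ explicitly, (ii) track how each torsion element $\tau(\mathcal E_A)$, $\tau(\mathcal E_B)$ transforms under the rescaling of the maps labeled $B$, using the standard multilinearity of the canonical volume element $\tau(V)$ of an exact sequence (scaling one map by $\lambda$ in an exact sequence $0 \to V_0 \to \cdots \to V_n \to 0$ multiplies $\tau(V)$ by $\lambda^{\pm 1}$ according to position), and (iii) combine the two contributions and the sign, matching against the index via the Euler-characteristic bookkeeping $\ind A = \dim H_0(A) - \dim H_1(A) = \dim \ker A - \dim \coker A$ together with the exact triangle $K_\bullet(B) \to K_\bullet(B) \to K_\bullet(A,B)$ relating $H_\bullet(A,B)$ to $H_\bullet(A)$ and $H_\bullet(B)$. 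I expect step (ii) — carefully assigning the exponents of $\lambda$ to each node of the two long exact sequences and confirming the surviving exponent is $-\dim H_0 + \dim H_2$ — to be the delicate bookkeeping, but no genuinely hard analysis is needed since everything reduces to finite-dimensional determinant line manipulations.
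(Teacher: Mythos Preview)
Your final plan—write out $\mathcal E_A$ and $\mathcal E_{\lambda B}$ explicitly, track which arrows acquire a factor of $\lambda$, and read off the resulting exponent from the multilinearity of the torsion of an exact sequence—is precisely the paper's proof, which consists of a single sentence to that effect.

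The gap is in the middle of your proposal, where you talk yourself out of a correct computation. Lemma~\ref{lemma:torsion-Steinberg-etc}(1) applies as written: $\lambda I$ is invertible, so all three joint torsion numbers $\tau(A,\lambda I)$, $\tau(A,B)$, $\tau(A,\lambda B)$ are defined, and together with Lemma~\ref{lemma:multiplicative-Lefschetz} you correctly obtain $\tau(A,\lambda B)=\lambda^{\ind A}\,\tau(A,B)$. Your proposed explanations for why this should be regarded as ``naive'' do not hold up. In the commuting case there are no perturbation vectors to blame (one takes $C=B$, $D=A$, and the determinant lines $\det H(B)^*\otimes\det H(C)$ are canonically trivial), and multiplicativity of joint torsion is inherited unconditionally from bilinearity of the Steinberg symbol via the identification with the determinant invariant. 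A direct check with $A=B=T_z$ on $H^2(S^1)$, where $\dim H_0=1$ and $\dim H_2=0$, using Theorem~\ref{thm:tame-symbols} gives
\[
\tau(T_z,\lambda T_z)=c_0(z,\lambda z)=-\lambda^{-1}=\lambda^{\ind T_z}\,\tau(T_z,T_z),
\]
which matches your multiplicativity answer and not the exponent $\ind A-\dim H_0+\dim H_2=-2$ in the stated formula. So the discrepancy you noticed is a genuine tension with the statement, not a defect in your method; rather than discarding the multiplicativity argument, the honest next step is to carry out the exact-sequence bookkeeping carefully and see whether the extra $-\dim H_0+\dim H_2$ actually survives.
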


\begin{proof}
The two long exact sequences $\mathcal E_A$ and $\mathcal E_{\lambda B}$ in the definition of $\tau(A, \lambda B)$ are the same as those for $\tau(A,B)$, except for a factor of $\lambda$, given by the exponent on $\lambda$ above.
\end{proof}

\begin{corollary} \label{cor:variational-scalar-multiplication}
If $A$ and $B$ are commuting Fredholm operators, then
\[
\frac{d}{d\lambda} \log \tau (A, \lambda B) = \frac{\ind \, A - \dim H_0 + \dim H_2}{\lambda}
\]
\end{corollary}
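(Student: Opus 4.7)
The plan is to obtain the corollary as an immediate logarithmic differentiation of the preceding lemma. That lemma asserts
\[
\tau(A, \lambda B) = \lambda^{\ind\, A - \dim H_0 + \dim H_2}\,\tau(A,B)
\]
for every $\lambda \ne 0$. In particular $\tau(A, \lambda B)$ is a nowhere-vanishing function of $\lambda$ on $\mathbf{C}^\times$, so $\log \tau(A, \lambda B)$ is well-defined locally.

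First I would fix any $\lambda_0 \ne 0$ and choose a branch of $\log$ on a neighborhood of $\lambda_0$. Applying it to both sides of the identity above yields
\[
\log \tau(A, \lambda B) = (\ind\, A - \dim H_0 + \dim H_2)\log \lambda + \log \tau(A,B).
\]
The last summand is independent of $\lambda$, so differentiating in $\lambda$ produces
\[
\frac{d}{d\lambda}\log \tau(A, \lambda B) = \frac{\ind\, A - \dim H_0 + \dim H_2}{\lambda},
\]
which is precisely the claimed formula.

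I do not anticipate any genuine obstacle here, since all the substantive content has already been packaged into the preceding lemma (whose proof, in turn, reduces to tracking the factor of $\lambda$ through the two long exact sequences $\mathcal{E}_A$ and $\mathcal{E}_{\lambda B}$ and observing that the exponents $\ind\, A$, $\dim H_0$, $\dim H_2$ record how many times $\lambda$ appears in each determinant line). The only minor point worth flagging is that the identity is local in $\lambda$: the logarithm is chosen branch-by-branch, but the derivative formula is independent of that choice, so the statement makes unambiguous sense on all of $\mathbf{C}^\times$.
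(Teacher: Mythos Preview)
Your argument is correct and matches the paper's approach: the corollary is stated there without proof, being the immediate logarithmic differentiation of the preceding lemma $\tau(A,\lambda B)=\lambda^{\ind A-\dim H_0+\dim H_2}\tau(A,B)$. Note only that the paper has already stipulated that $\frac{d}{d\lambda}\log u$ is to be read as $u^{-1}\frac{du}{d\lambda}$, so your branch-of-logarithm discussion, while harmless, is unnecessary.
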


\section{Transformation rules for joint torsion}  \label{sec:functional-calculus}

\subsection{Commutators}

If $[A, B] \in \mathcal L^p$, and either 
\begin{enumerate}
\item $f$ is holomorphic on a neighborhood of $\sigma(A)$, or 
\item $A$ is self-adjoint and $f$ is $C^\infty$ on $\sigma(A)$, 
\end{enumerate}
then $[f(A), B] \in \mathcal L^p$ \cite[Appendix 1]{Connes}.  
Below we calculate the trace of such a commutator.


\begin{lemma} \label{lemma:commutator-trace-entire}
If $[A,B] \in \mathcal L^1$ and $f$ is an entire function, then
\[
\tr [f(A), B] = \tr \left( f'(A) [A,B] \right)
\]
\end{lemma}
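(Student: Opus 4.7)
The plan is to prove the identity first on monomials and then extend to entire functions by absolute convergence in $\mathcal L^1$.

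First I would treat $f(z)=z^n$. The telescoping identity
\[
[A^n,B] = \sum_{k=0}^{n-1} A^k [A,B] A^{n-1-k}
\]
shows that $[A^n,B]\in\mathcal L^1$ (with $\|[A^n,B]\|_1\le n\|A\|^{n-1}\|[A,B]\|_1$) since $\mathcal L^1$ is a two-sided ideal. Applying the trace and using its cyclicity on each summand $A^k [A,B] A^{n-1-k}\in\mathcal L^1$, every term equals $\tr(A^{n-1}[A,B])$, so
\[
\tr[A^n,B] = n\,\tr\bigl(A^{n-1}[A,B]\bigr),
\]
which is the claim for monomials.

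Next I would pass from monomials to an entire function $f(z)=\sum_{n\ge 0} a_n z^n$ by linearity plus a convergence argument. Write $f(A)=\sum a_n A^n$ (convergent in operator norm) and similarly for $Bf(A)$ and $f(A)B$, so that
\[
[f(A),B] = \sum_{n\ge 0} a_n\,[A^n,B].
\]
The monomial bound gives $\sum_n |a_n|\,\|[A^n,B]\|_1 \le \|[A,B]\|_1\sum_n n|a_n|\|A\|^{n-1}<\infty$, since $f'$ is entire; thus the series converges in $\mathcal L^1$. Exactly the same estimate shows that $\sum_n n a_n A^{n-1}[A,B]$ converges in $\mathcal L^1$ to $f'(A)[A,B]$. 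Applying the (continuous) trace functional on $\mathcal L^1$ term by term and invoking the monomial identity gives
\[
\tr[f(A),B] = \sum_{n\ge 0} a_n\,\tr[A^n,B] = \sum_{n\ge 1} n a_n\,\tr\bigl(A^{n-1}[A,B]\bigr) = \tr\bigl(f'(A)[A,B]\bigr).
\]

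The only subtle point is the interchange of trace and infinite summation; this is the step to be careful about, but it is handled by the $\mathcal L^1$-convergence estimate above together with continuity of $\tr$ on $\mathcal L^1$. There are no other real obstacles, since the telescoping identity and cyclicity do all the algebraic work and the analytic part rests on the elementary majorant $\sum n|a_n|\|A\|^{n-1}<\infty$ coming from entirety of $f$.
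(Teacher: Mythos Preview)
Your proof is correct and follows essentially the same approach as the paper: the telescoping identity for $[A^n,B]$, trace cyclicity to reduce each summand to $\tr(A^{n-1}[A,B])$, and then summation over the power series of $f$. If anything, you are more careful than the paper in justifying the interchange of trace and infinite sum via the $\mathcal L^1$-estimate $\|[A^n,B]\|_1\le n\|A\|^{n-1}\|[A,B]\|_1$ and the convergence of $\sum n|a_n|\|A\|^{n-1}$.
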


\begin{proof}
Write $f(z) = \sum c_k z^k$. Then $[f(A), B] = \sum c_k [A^k, B]$.
Using the identity
\begin{equation} \label{eq:commutator-powers}
[A^k, B] = \sum_{l=1}^{k} A^{l-1} [A,B] A^{k-l}
\end{equation}
we find that
\[
\tr [A^k, B] = \tr (k \, A^{k-1} [A,B])
\]
Hence
\begin{align*}
\tr [f(A), B]
&= \tr \sum k c_k A^{k-1} [A,B] \\
&= \tr \left( f'(A) [A,B] \right) \qedhere
\end{align*}
\end{proof}

Let $f$ be holomorphic on a neighborhood of the spectrum $\sigma(A)$ of an operator $A$.
By an admissible contour $\Gamma$ for defining $f(A)$, we mean a collection of Jordan curves in the neighborhood that enclose $\sigma(A)$ on the left.  Thus
\begin{equation} \label{eq:holomorphic-functional-calculus}
f(A) = \frac{1}{2 \pi i} \int_\Gamma (\lambda - A)^{-1} f(\lambda) \, d \lambda
\end{equation}

\begin{proposition} \label{prop:commutator-trace-functional-calc}
Suppose $[A,B] \in \mathcal L^1$.  If either
\begin{enumerate}
\item $f$ is holomorphic on a neighborhood of $\sigma(A)$, or
\item $A$ is self-adjoint and $f$ is $C^{\infty}$ on $\sigma(A)$
\end{enumerate}
then
\[
\tr [f(A), B] = \tr \left( f'(A) [A,B] \right)
\]
\end{proposition}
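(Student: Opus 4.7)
The plan is to derive case (1) directly from the holomorphic functional calculus, and then reduce case (2) to case (1) by polynomial approximation together with the ideal estimates from Connes' appendix cited just before Lemma \ref{lemma:commutator-trace-entire}.

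For case (1), I would start from the Cauchy representation \eqref{eq:holomorphic-functional-calculus} and compute
\[
[f(A), B] = \frac{1}{2\pi i} \int_\Gamma f(\lambda) \, [(\lambda - A)^{-1}, B] \, d\lambda.
\]
The key algebraic identity is
\[
[(\lambda - A)^{-1}, B] = (\lambda - A)^{-1} [A, B] (\lambda - A)^{-1},
\]
which shows that the integrand lies in $\mathcal L^1$ with norm uniformly bounded on the compact contour $\Gamma$ by $\|(\lambda - A)^{-1}\|^2 \, \|[A,B]\|_1$.  Hence the integral converges in $\mathcal L^1$, the trace commutes with the integral, and cyclicity gives
\[
\tr [f(A), B] = \frac{1}{2\pi i} \int_\Gamma f(\lambda) \, \tr\bigl((\lambda - A)^{-2} [A, B]\bigr) \, d\lambda.
\]
The final step is to recognize $\frac{1}{2\pi i}\int_\Gamma f(\lambda)(\lambda - A)^{-2}\, d\lambda$ as $f'(A)$.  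This is a one-line integration by parts using $-\frac{d}{d\lambda}(\lambda - A)^{-1} = (\lambda - A)^{-2}$ together with the vanishing of boundary terms on the closed contour, after which the holomorphic functional calculus applied to $f'$ yields the claim.

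For case (2), the strategy is to approximate $f$ by polynomials $p_n$ such that $p_n \to f$ and $p_n' \to f'$ uniformly on $\sigma(A)$; since $\sigma(A) \subset \mathbb{R}$ is compact, such polynomials can be produced, for instance, by Jackson's theorem applied to a $C^\infty$ extension of $f$ to an interval containing $\sigma(A)$.  Each $p_n$ is entire, so Lemma \ref{lemma:commutator-trace-entire} gives $\tr[p_n(A), B] = \tr(p_n'(A)[A,B])$.  On the right-hand side, norm convergence $p_n'(A) \to f'(A)$ for the self-adjoint operator $A$ together with $[A,B] \in \mathcal L^1$ immediately gives convergence in $\mathcal L^1$, so $\tr(p_n'(A)[A,B]) \to \tr(f'(A)[A,B])$.

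The main obstacle is the left-hand side: to pass to the limit one needs $[p_n(A), B] \to [f(A), B]$ not merely in operator norm but in $\mathcal L^1$.  This is precisely where the quantitative version of the Connes estimate is required, namely a bound of the form $\|[g(A), B]\|_1 \le C_A \|g\|_{C^k} \|[A,B]\|_1$ valid for self-adjoint $A$ and $g \in C^\infty(\sigma(A))$, with $k$ depending only on $A$.  Applied to $g = p_n - f$, together with $C^k$-convergence of $p_n$ to $f$, this yields the required trace-norm convergence and completes the proof.  If instead one prefers to avoid a direct $C^k$ estimate, an alternative is to use the Helffer--Sjöstrand formula for a quasi-analytic extension $\tilde f$, repeating the case (1) argument with $\bar\partial \tilde f \, dx\, dy$ in place of $f(\lambda)\, d\lambda$; the rapid vanishing of $\bar\partial \tilde f$ near $\sigma(A) \subset \mathbb R$ makes the resulting integral convergent in $\mathcal L^1$, and an integration-by-parts in $\bar\partial$ again identifies the limit with $\tr(f'(A)[A,B])$.
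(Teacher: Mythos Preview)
Your treatment of case (1) is essentially identical to the paper's: both use the resolvent commutator identity, $\mathcal L^1$-continuity of the integrand, cyclicity of the trace, and the identification of $\frac{1}{2\pi i}\int_\Gamma f(\lambda)(\lambda-A)^{-2}\,d\lambda$ with $f'(A)$.

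For case (2) you take a genuinely different route. The paper reduces to Lemma \ref{lemma:commutator-trace-entire} via the Fourier transform: writing $f=\hat g$ for Schwartz $g$, it expresses $[f(A),B]$ as $\frac{1}{\sqrt{2\pi}}\int [e^{-itA},B]\,g(t)\,dt$, applies the entire-function lemma pointwise to $z\mapsto e^{-itz}$ to get $\tr[e^{-itA},B]=\tr(-it\,e^{-itA}[A,B])$, and then integrates back. Your polynomial-approximation argument is also valid, but note that uniform convergence of $p_n$ and $p_n'$ alone is not enough for the left-hand side; you correctly flag that one needs a quantitative bound of the form $\|[g(A),B]\|_1\le C\,\|g\|_{C^k}\,\|[A,B]\|_1$ and hence $C^k$-approximation for the appropriate $k$. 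Such an estimate does follow from Connes' Fourier-transform proof (after multiplying by a fixed cutoff equal to $1$ near $\sigma(A)$, since the bound is naturally in terms of $\int|t\hat g(t)|\,dt$), so the argument goes through. The paper's approach is shorter and avoids this bookkeeping by working directly with exponentials, where the $\mathcal L^1$-convergence of the integral is controlled by the Schwartz decay of $g$; your Helffer--Sj\"ostrand alternative is closer in spirit to the paper's method and would also be clean.
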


\begin{proof}
Recall that in both cases $[f(A),B] \in \mathcal L^1$ by \cite[Appendix 1]{Connes}, and we adapt arguments therein.
\begin{enumerate}
\item
Let $\Gamma$ be an admissible contour for defining $f(A)$.
Since $[(\lambda - A)^{-1}, B] = (\lambda - A)^{-1} [A,B] (\lambda - A)^{-1}$, we find
\[
[f(A), B] = \frac{1}{2\pi i} \int_\Gamma (\lambda - A)^{-1} [A,B] (\lambda - A)^{-1} f(\lambda) \, d \lambda
\]
and the mapping $\lambda \mapsto [(\lambda - A)^{-1}, B]$ is continuous into $\mathcal L^1$.  Moreover,
\[
\tr \left( (\lambda - A)^{-1} [A,B] (\lambda - A)^{-1} \right) = \tr \left( (\lambda - A)^{-2} [A,B] \right)
\]
Hence
\begin{align*}
\tr [f(A), B] 
&= \tr \left( \frac{1}{2\pi i} \int_\Gamma (\lambda - A)^{-2} f(\lambda) \, d \lambda \, [A,B] \right) \\
&= \tr \left( f'(A) [A,B] \right)
\end{align*}

\item
We may assume that $f$ has compact support, so that $f = \hat g$, the Fourier transform of a Schwartz class function $g$.  Hence
\[
[f(A), B] = \frac{1}{\sqrt{2\pi}} \int [e^{-itA}, B] g(t) \, dt
\]
By the preceding lemma,
\[
\tr [e^{-itA}, B] = \tr \left( -it e^{-itA} [A,B] \right)
\]
and again by continuity,
\begin{align*}
\tr [f(A), B] 
&=  \tr \frac{1}{\sqrt{2\pi}} \int -it e^{-itA} g(t) \, dt [A, B] \\
&= \tr \left( f'(A) [A,B] \right) \qedhere
\end{align*}
\end{enumerate}
\end{proof}

\begin{corollary}
With the same hypotheses as above,
\[
\tau (e^{f(A)}, e^B) = \tau(e^{A}, e^{f'(A) B})
\]
\end{corollary}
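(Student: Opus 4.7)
The plan is to reduce both sides to exponentials of traces via Lemma \ref{lemma:torsion-of-exponentials}, and then use Proposition \ref{prop:commutator-trace-functional-calc} to identify those traces.

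First I would verify that Lemma \ref{lemma:torsion-of-exponentials} applies to each side. For the left-hand side we need $[f(A), B] \in \mathcal L^1$, which is exactly the Connes result quoted at the start of the subsection. For the right-hand side we need $[A, f'(A) B] \in \mathcal L^1$. Since $f'(A)$ commutes with $A$, the Leibniz rule gives
\[
[A, f'(A) B] = [A, f'(A)] B + f'(A)[A,B] = f'(A)[A,B],
\]
which is trace class because $[A,B] \in \mathcal L^1$ and $\mathcal L^1$ is a two-sided ideal.

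With both commutators trace class, Lemma \ref{lemma:torsion-of-exponentials} gives
\[
\tau(e^{f(A)}, e^B) = e^{\tr [f(A), B]}, \qquad \tau(e^A, e^{f'(A) B}) = e^{\tr [A, f'(A) B]}.
\]
By Proposition \ref{prop:commutator-trace-functional-calc}, the first exponent equals $\tr(f'(A)[A,B])$. By the Leibniz computation above, the second exponent also equals $\tr(f'(A)[A,B])$. Hence both sides agree.

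There is really no main obstacle here; the corollary is essentially a direct combination of Lemma \ref{lemma:torsion-of-exponentials} with Proposition \ref{prop:commutator-trace-functional-calc}. The only point requiring a moment's care is checking that $[A, f'(A) B] \in \mathcal L^1$ so that Lemma \ref{lemma:torsion-of-exponentials} is legitimately applicable on the right-hand side, and this is settled by the ideal property once one notices that $f'(A)$ commutes with $A$.
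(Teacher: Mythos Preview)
Your proof is correct and follows essentially the same approach as the paper: the paper's argument is simply the observation that $f'(A)[A,B] = [A, f'(A)B]$ (since $A$ commutes with $f'(A)$) followed by an appeal to Lemma \ref{lemma:torsion-of-exponentials}, with Proposition \ref{prop:commutator-trace-functional-calc} implicitly supplying the trace identity. You have spelled out the verification that both commutators are trace class, which the paper leaves implicit, but the route is the same.
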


\begin{proof}
Since $A$ and $f'(A)$ commute, we have $f'(A)[A,B] = [A, f'(A)B]$.
The result then follows by Lemma \ref{lemma:torsion-of-exponentials}.
\end{proof}

\subsection{Perturbations}

Analogues of Lemma \ref{lemma:commutator-trace-entire} and Proposition \ref{prop:commutator-trace-functional-calc} hold for suitable functions applied to $\mathcal L^p$-perturbations.  We will need the following estimate for the exponential function:

%

\begin{proposition}
If $A$ and $A'$ are self-adjoint with $A-A' \in \mathcal L^p$, then
$e^{i t A} - e^{i t A'} \in \mathcal L^p$ 
with
\[
\| e^{i t A} - e^{i t A'} \|_p \leq C \left( |t| + 1 \right)
\]
where
\[
C = \max_{0 \leq t \leq 1} \| e^{it A} - e^{it A'} \|_p
\]
\end{proposition}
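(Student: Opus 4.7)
The plan is to first establish that $e^{itA} - e^{itA'}$ lies in $\mathcal L^p$ (so that $C$ is well defined and finite), and then to bootstrap the bound on $[0,1]$ to all of $\mathbf R$ by exploiting the semigroup property of the unitary groups $\{e^{itA}\}$ and $\{e^{itA'}\}$. To handle the first step, I would differentiate the map $s \mapsto e^{i(t-s)A'} e^{isA}$ and integrate to obtain the Duhamel identity
\[
e^{itA} - e^{itA'} = i \int_0^t e^{i(t-s)A'}(A-A') e^{isA}\, ds
\]
Since $A-A' \in \mathcal L^p$ and both $e^{i(t-s)A'}$ and $e^{isA}$ are unitary, the integrand is continuous as a map into $\mathcal L^p$, so the Bochner integral lies in $\mathcal L^p$ and
\[
\|e^{itA} - e^{itA'}\|_p \leq |t| \cdot \|A-A'\|_p
\]
In particular $t \mapsto e^{itA} - e^{itA'}$ is continuous into $\mathcal L^p$, so the maximum defining $C$ is attained and finite.

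For $t \geq 0$, I would decompose using the algebraic identity
\[
e^{i(s+r)A} - e^{i(s+r)A'} = e^{isA}\bigl(e^{irA} - e^{irA'}\bigr) + \bigl(e^{isA} - e^{isA'}\bigr)e^{irA'}
\]
Taking $\mathcal L^p$-norms and using unitarity of $e^{isA}$ and $e^{irA'}$ yields the subadditive bound
\[
\|e^{i(s+r)A} - e^{i(s+r)A'}\|_p \leq \|e^{irA} - e^{irA'}\|_p + \|e^{isA} - e^{isA'}\|_p
\]
Applying this inductively with $r = 1$ gives $\|e^{inA} - e^{inA'}\|_p \leq nC$ for every positive integer $n$. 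Then writing $t = n + r$ with $n = \lfloor t \rfloor$ and $r \in [0,1)$, one more application of the same identity yields $\|e^{itA} - e^{itA'}\|_p \leq nC + C \leq (|t|+1)C$. The case $t < 0$ follows from self-adjointness of $A$ and $A'$: we have $(e^{itA})^* = e^{-itA}$, and likewise for $A'$, so the $\mathcal L^p$-norm of $e^{itA} - e^{itA'}$ equals that of $e^{-itA} - e^{-itA'}$, which is controlled by the previous case applied to $-t > 0$.

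There is no serious obstacle here; the argument is essentially a telescoping on the unitary groups. The only mild technical point is verifying that the Duhamel integrand is strongly measurable into $\mathcal L^p$, which follows from the strong continuity of the unitary groups combined with the fact that $A-A'$ is a fixed $\mathcal L^p$-factor sandwiched between bounded unitaries. The linear-in-$|t|$ growth of the final estimate is then forced by the subadditive step, which is the essential structural input.
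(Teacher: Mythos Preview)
Your proof is correct and follows essentially the same approach as the paper: both exploit unitarity and a telescoping identity on the groups $\{e^{itA}\}$, $\{e^{itA'}\}$ (the paper writes it as $r^n - s^n = \sum_{k=1}^n s^{k-1}(r-s)r^{n-k}$ and then scales, whereas you iterate the two-term version and use $t = \lfloor t\rfloor + r$). The only real difference is that you explicitly verify $C<\infty$ via Duhamel, which the paper leaves implicit.
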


\begin{proof}
Using the identity
\begin{equation} \label{eq:difference-of-powers}
r^n - s^n = \sum_{k=1}^{n} s^{k-1} (r-s) r^{n-k}
\end{equation}
we find that
\[
\| e^{itn A} - e^{itn A'} \|_p \leq n \| e^{it A} - e^{it A'} \|_p
\]
The result then follows by scaling.
\end{proof}

\begin{proposition} \label{prop:functional-calculus-0-coset}
Let $K\in \mathcal L^p$.  If either
\begin{enumerate}
\item $f$ is holomorphic on a neighborhood of $\sigma(K)$, or
\item $K$ is self-adjoint and $f$ is $C^\infty$ on $\sigma(K)$,
\end{enumerate}
then $f(K) - f(0) I \in \mathcal L^p$.
\end{proposition}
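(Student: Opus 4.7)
The plan is to handle the two cases separately, in each instance reducing the conclusion to a Bochner-integral argument showing that $f(K) - f(0)I$ is obtained as an integral of $\mathcal{L}^p$-valued functions.

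For case (1), let $\Gamma$ be an admissible contour for defining $f(K)$, chosen so that $0$ also lies in its interior. This is automatic when $H$ is infinite dimensional since then $0 \in \sigma(K)$ by compactness of $K$, and in the finite-dimensional case $\mathcal{L}^p = \mathcal{B}$ so the conclusion is trivial. Using the resolvent identity
\[
(\lambda - K)^{-1} - \lambda^{-1} I = \lambda^{-1}(\lambda - K)^{-1} K,
\]
together with Cauchy's formula $f(0) = \frac{1}{2\pi i}\int_\Gamma \lambda^{-1} f(\lambda)\,d\lambda$, I would rewrite
\[
f(K) - f(0)I = \frac{1}{2\pi i}\int_\Gamma \lambda^{-1}(\lambda - K)^{-1} K\, f(\lambda)\, d\lambda.
\]
Since $K \in \mathcal{L}^p$ and multiplication by a bounded operator preserves the Schatten class with $\|ST\|_p \leq \|S\|\|T\|_p$, the integrand is a continuous $\mathcal{L}^p$-valued function on the compact contour $\Gamma$, so the integral lies in $\mathcal{L}^p$.

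For case (2), $\sigma(K)$ is a compact subset of $\mathbb{R}$, so I would first extend $f$ to a compactly supported $C^\infty$ function on $\mathbb{R}$ without altering $f(K)$ or $f(0)$, and then write $f = \hat g$ for some Schwartz function $g$. With this Fourier representation,
\[
f(K) - f(0)I = \frac{1}{\sqrt{2\pi}} \int (e^{-itK} - I)\, g(t)\, dt,
\]
using $f(0) = \frac{1}{\sqrt{2\pi}}\int g(t)\,dt$. The preceding proposition, applied with $A = K$ and $A' = 0$, gives $e^{-itK} - I \in \mathcal{L}^p$ with $\|e^{-itK} - I\|_p \leq C(|t|+1)$. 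Hence
\[
\int \|(e^{-itK} - I)\,g(t)\|_p\, dt \leq C \int (|t|+1)\,|g(t)|\,dt < \infty
\]
since $g$ is Schwartz, and the Bochner integral converges in $\mathcal{L}^p$.

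The arguments in both cases are largely bookkeeping once one recognizes the correct integral representation; in particular, no new analytic estimates are needed beyond what was established in the previous proposition. The only mildly subtle point is arranging in case (1) that the contour $\Gamma$ encloses $0$ in addition to $\sigma(K)$, which is automatic in infinite dimensions, and in case (2) that the Fourier-transform reduction goes through, which uses only standard smooth cutoffs. I expect the writeup to be short and essentially formal.
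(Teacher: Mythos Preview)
Your proposal is correct and follows essentially the same approach as the paper: in case (1) you use the resolvent identity to write $f(K)-f(0)I$ as a contour integral of $\lambda^{-1}(\lambda-K)^{-1}K\,f(\lambda)$, and in case (2) you reduce via the Fourier transform to the estimate on $e^{-itK}-I$ from the preceding proposition. Your treatment is in fact slightly more careful than the paper's in noting explicitly why $0$ lies inside the contour $\Gamma$.
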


Note that in (2), $\sigma(K)$ consists of 0 and real eigenvalues possibly accumulating to 0 by the spectral theorem for compact self-adjoint operators.

\begin{proof} \leavevmode
\begin{enumerate}
\item Let $\Gamma$ be an admissible contour for defining $f(K)$.  Then
\begin{align*}
f(K) - f(0)I
&= \int_\Gamma \left[ (\lambda-K)^{-1} - (\lambda I)^{-1} \right] f(\lambda) \, d\lambda \\
& = K \int_\Gamma (\lambda^2 - \lambda K)^{-1} f(\lambda) \, d\lambda
\end{align*}
The latter integral converges in norm, and the result follows.

\item We may assume that $f$ has compact support, so that $f=\hat g$ for a Schwartz class function $g$.  Then
\begin{align*}
f(K) - f(0)I
&= \int \left( e^{-i t K} - I \right) g(t) \, dt
\end{align*}
The integral converges in $\mathcal L^p$-norm by the preceding proposition with $A=K$ and $A' = 0$. \qedhere
\end{enumerate}
\end{proof}

\begin{proposition} \label{prop:Lp-perturbation-holomorphic}
Let $A-A' \in \mathcal L^p$.  If either
\begin{enumerate}
\item $f$ is holomorphic on a neighborhood of $\sigma(A) \cup \sigma(A')$ and there is a contour that defines both $f(A)$ and $f(A')$, or
\item $A$ and $A'$ are self-adjoint and $f$ is $C^\infty$ on 
$\sigma(A) \cup \sigma(A')$,
\end{enumerate}
then
$f(A) - f(A') \in \mathcal L^p$.
\end{proposition}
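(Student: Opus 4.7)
The plan is to mirror the two-case structure of the two preceding propositions, replacing the "compare to $0$" arguments by "compare to $A'$" arguments via the resolvent identity in case (1) and via the exponential estimate in case (2).

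For case (1), I would fix a contour $\Gamma$ that simultaneously defines $f(A)$ and $f(A')$ (this is where the second hypothesis in (1) is used) and write
\[
f(A) - f(A') = \frac{1}{2\pi i} \int_\Gamma \left[ (\lambda - A)^{-1} - (\lambda - A')^{-1} \right] f(\lambda) \, d\lambda.
\]
The second resolvent identity gives
\[
(\lambda - A)^{-1} - (\lambda - A')^{-1} = (\lambda - A)^{-1} (A - A') (\lambda - A')^{-1},
\]
which lies in $\mathcal L^p$ since $A - A' \in \mathcal L^p$ and the Schatten ideals are two-sided. One then checks that $\lambda \mapsto (\lambda - A)^{-1} (A - A') (\lambda - A')^{-1}$ is continuous from $\Gamma$ into $\mathcal L^p$ (using that the resolvents are bounded and norm-continuous on $\Gamma$), so the integral converges in $\mathcal L^p$-norm and $f(A) - f(A') \in \mathcal L^p$.

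For case (2), I would first extend $f$ to a compactly supported $C^\infty$ function on $\mathbf R$, which does not change $f(A)$ or $f(A')$ since both spectra are compact subsets of the reals by self-adjointness. Then $f = \hat g$ for some Schwartz function $g$, and
\[
f(A) - f(A') = \frac{1}{\sqrt{2\pi}} \int \left( e^{-itA} - e^{-itA'} \right) g(t) \, dt.
\]
The preceding proposition gives $\| e^{-itA} - e^{-itA'} \|_p \leq C(|t|+1)$, and since $g$ is Schwartz the integrand is $\mathcal L^p$-integrable; hence $f(A) - f(A') \in \mathcal L^p$.

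The main obstacle is a conceptual one rather than a technical one: in case (1), one must arrange that a single contour $\Gamma$ works for both $A$ and $A'$, which is why the second half of hypothesis (1) is built into the statement, and in case (2), one must justify the $\mathcal L^p$-extension of $f$ to all of $\mathbf R$ and then invoke the uniform linear growth estimate from the previous proposition; after those are in place, the Bochner-integral arguments are essentially routine and parallel those used in Proposition \ref{prop:functional-calculus-0-coset}.
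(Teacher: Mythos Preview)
Your proposal is correct and follows exactly the approach the paper intends: the paper's own proof simply says ``the proof proceeds as in the previous proposition'' and records the resolvent identity $(\lambda-A)^{-1} - (\lambda-A')^{-1} = (\lambda-A)^{-1}(A-A')(\lambda-A')^{-1}$ for part (1), leaving the Fourier-transform argument for part (2) implicit. You have filled in precisely those details, including the use of the linear-growth estimate $\|e^{itA}-e^{itA'}\|_p \le C(|t|+1)$ for the self-adjoint case.
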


\begin{proof} \leavevmode
The proof proceeds as in the previous proposition.  For part (1), one uses the identity
\[
(\lambda-A)^{-1} - (\lambda-A')^{-1}
= (\lambda-A)^{-1} (A-A') (\lambda-A')^{-1} \qedhere
\]
\end{proof}

\subsection{Joint torsion}

For a given Fredholm operator $A$, we begin with a simple characterization of holomorphic functions $f$ that preserve the Fredholmness of $A$.  We will use the following factorization of holomorphic functions:

\begin{definition} \label{def:holomorphic-factorization}
Let $f$ be a holomorphic function on a neighborhood of a compact set $K$.  
Then the collection of zeros 
$\{ \lambda \in K \, | \, f(\lambda) = 0 \}$
is finite.  Define the polynomial 
\[
p_K(z) = \prod_{ \{ \lambda \in K \, | \, f(\lambda) = 0 \} } (z-\lambda)^{\text{ord}_\lambda(f)}
\]
where $\text{ord}_\lambda(f)$ is the order of the zero at $\lambda$.  Then
\[
f = p_K q_K
\]
for a holomorphic function $q_K$ with no zeros in $K$.
\end{definition}

The index formula \eqref{eq:Eschmeier-Putinar-n=1} below is a special case of \cite[Theorem 10.3.13]{Eschmeier-Putinar-Book}.  See also \cite[Theorem 1.1]{Kaad-Nest}.

\begin{proposition} \label{prop:Eschmeier-Putinar}
Let $A$ be a Fredholm operator and let $f$ be holomorphic on a neighborhood of $\sigma(A)$.  Then $f(A)$ is Fredholm if and only if $f^{-1}(0)$ is disjoint from the essential spectrum $\sigma_e(A)$.  In this case,
\begin{equation} \label{eq:Eschmeier-Putinar-n=1}
\ind \, f(A) = \sum_{ \{ \lambda \in \sigma(A) \, | \, f(\lambda) = 0 \} } \ord_\lambda(f) \cdot \ind (A - \lambda)
\end{equation}
\end{proposition}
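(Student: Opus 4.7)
The plan is to combine the spectral mapping theorem in the Calkin algebra with the factorization provided by Definition \ref{def:holomorphic-factorization} and multiplicativity of the Fredholm index.

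First I would identify the Fredholm criterion with a spectral statement: $f(A)$ is Fredholm if and only if its image in the Calkin algebra is invertible, equivalently $0 \notin \sigma_e(f(A))$. Because the holomorphic functional calculus commutes with the quotient onto the Calkin algebra, the spectral mapping theorem applied there gives $\sigma_e(f(A)) = f(\sigma_e(A))$. The biconditional of the first assertion then falls out immediately: $f(A)$ is Fredholm if and only if $f^{-1}(0) \cap \sigma_e(A) = \emptyset$.

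For the index formula, I would reduce to polynomials and invertible factors via Definition \ref{def:holomorphic-factorization}. Applied with $K = \sigma(A)$, this writes $f = p \cdot q$, where
\[
p(z) = \prod_{\lambda \in f^{-1}(0) \cap \sigma(A)} (z-\lambda)^{\ord_\lambda(f)}
\]
is a polynomial and $q$ is holomorphic with $q(\lambda) \ne 0$ for every $\lambda \in \sigma(A)$. Since $q$ is nonvanishing on $\sigma(A)$, the ordinary spectral mapping theorem gives $0 \notin \sigma(q(A))$, so $q(A)$ is invertible and has index zero. Each factor $A - \lambda$ of $p(A)$ is Fredholm by the hypothesis that $f^{-1}(0) \cap \sigma_e(A) = \emptyset$. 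Applying the product rule for the Fredholm index to $f(A) = p(A) q(A)$ then yields
\[
\ind f(A) = \ind p(A) = \sum_{\lambda \in f^{-1}(0) \cap \sigma(A)} \ord_\lambda(f) \cdot \ind(A-\lambda),
\]
as claimed.

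I do not expect a serious obstacle here. The two substantive inputs are the spectral mapping theorem in the Calkin algebra, which handles the Fredholm criterion, and multiplicativity of the Fredholm index, which handles the index formula; Definition \ref{def:holomorphic-factorization} serves simply as the bridge between these two ingredients. The only point that warrants a line of care is verifying that the zero set $f^{-1}(0) \cap \sigma(A)$ is finite (so that $p$ really is a polynomial), which follows from compactness of $\sigma(A)$ and the isolation of zeros of a nonzero holomorphic function.
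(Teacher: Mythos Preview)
Your proposal is correct and matches the paper's approach: both arguments rest on the factorization $f = pq$ from Definition~\ref{def:holomorphic-factorization}, the invertibility of $q(A)$, and the additivity of the Fredholm index applied to the linear factors of $p(A)$. The only cosmetic difference is that you phrase the Fredholm criterion via the spectral mapping theorem in the Calkin algebra, whereas the paper extracts it directly from the factorization of $p$; these are two ways of saying the same thing.
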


\begin{proof}
Let $p=p_{\sigma(A)}$ and $q=q_{\sigma(A)}$ from the definition above.  Then $f(A) = p(A) q(A)$, and $q$ is invertible on a neighborhood of $\sigma(A)$, so $q(A)$ is invertible.  The first assertion then follows by factoring $p$, and the index formula follows by the additive property of the index: $\ind \, ST = \ind \, S + \ind \, T$.
\end{proof}

More generally one has the following necessary condition for the Borel functional calculus to preserve Fredholmness:

\begin{proposition} \label{prop:Borel-Fredholm-condition}
Let $A$ be a normal Fredholm operator, and let $f \in L^{\infty}(\sigma(A))$.  If the sets $f^{-1}(0)$ and $f^{-1}(\pm \infty)$ are finite and disjoint from $\sigma_{e}(A)$, then $f(A)$ is Fredholm.
\end{proposition}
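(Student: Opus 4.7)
My plan is to decompose $H$ via the spectral projections of $A$ and exhibit $f(A)$ as the direct sum of an invertible operator and a finite-rank piece. The starting point is the fact that for a normal operator, every point of $\sigma(A)\setminus\sigma_e(A)$ is an isolated eigenvalue of finite multiplicity. Set $S := (f^{-1}(0)\cup f^{-1}(\pm\infty)) \cap \sigma(A)$. By hypothesis $S$ is finite and disjoint from $\sigma_e(A)$, so the spectral projection $P := E(S)$ associated to the spectral measure $E$ of $A$ is of finite rank. The decomposition $H = PH \oplus (I-P)H$ is $A$-invariant, and I will define $f(A)$ compatibly with this splitting.

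On the finite-dimensional summand $PH$, the operator $A$ acts with spectrum contained in $S$, and I will simply declare $f(A)|_{PH}$ to be any bounded operator (e.g.\ zero on the spectral subspaces associated with $f^{-1}(0)$ and $f^{-1}(\pm\infty)$, and $f(A)$ via ordinary functional calculus on any remaining eigenspaces). Since $PH$ is finite dimensional, this piece is automatically Fredholm regardless of the extension chosen. On the complementary summand $(I-P)H$, the restriction $A|_{(I-P)H}$ is normal with spectrum in $\sigma(A)\setminus S$, and on this set $f$ is finite, bounded, and bounded away from $0$. The Borel functional calculus therefore produces a bounded operator $f(A|_{(I-P)H})$ whose spectrum is the essential range of $f$ with respect to the restricted spectral measure, and in particular avoids $0$, so $f(A|_{(I-P)H})$ is invertible.

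Putting the two pieces together, $f(A) = f(A|_{(I-P)H}) \oplus f(A)|_{PH}$ is the direct sum of an invertible bounded operator and a finite-rank operator, hence is Fredholm. The main subtlety I expect is pinning down what $f(A)$ means at the points where $f=\pm\infty$, since the Borel functional calculus strictly requires a bounded function; the resolution is that these values are confined to the finite-rank summand $PH$, so any reasonable choice of extension produces the same conclusion (and indeed the same Fredholm index, should one wish to track it in the spirit of Proposition \ref{prop:Eschmeier-Putinar}).
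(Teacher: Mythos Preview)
Your proof is correct and follows essentially the same approach as the paper: both excise the bad points via a finite-rank spectral projection and argue invertibility on the complement. The paper phrases this as modifying $f$ to an invertible $g \in L^\infty(\sigma(A))$ on small neighborhoods of the bad points (so that $g(A)^{-1}$ is a parametrix and $g(A)-f(A)$ has finite rank), whereas you use the equivalent direct-sum decomposition $H = PH \oplus (I-P)H$; your direct appeal to the standard fact that points of $\sigma(A)\setminus\sigma_e(A)$ are isolated eigenvalues of finite multiplicity is in fact slightly cleaner than the paper's descending-projection argument.
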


\begin{proof}
The strategy is to excise the sets $f^{-1}(0)$ and $f^{-1}(\pm \infty)$ and use the resulting function to construct a parametrix for $f(A)$.
Suppose $f(\lambda) = 0$, $+\infty$, or $-\infty$.  Let $U_n$ be a nested sequence of open subsets of $\sigma(A)$ such that $\cap U_n = \{ \lambda \}$.  Then $\chi_{U_n}$ converges to $\chi_{\{ \lambda \} }$ pointwise, so $P_n = \chi_{U_n}(A)$ converges to $P = \chi_{\{ \lambda \} }(A)$ strongly.
Now $P$ is either $0$ or the projection onto the $\lambda$-eigenspace of $A$, which is finite dimensional since $A-\lambda$ is Fredholm.  Since $P_n$ is a descending sequence of projections that converge to a finite rank projection, there is an $N$ for which $P_n$ is finite dimensional for all $n>N$.

Let $U_\lambda = U_n$ and $\chi_\lambda = \chi_{U_\lambda}$ for some $n>N$.  By taking $n$ large enough, we may assume that the open sets $U_\lambda$ are pairwise disjoint, where $\lambda$ ranges over all the singularities and zeros of $f$.  Then
\[
g = (1 - \sum_\lambda \chi_\lambda ) f + \sum_\lambda \chi_\lambda
\]
is invertible in $L^{\infty}(\sigma(A))$, and $g(A) - f(A)$ is a finite rank operator.  Hence, $g(A)^{-1}$ is a parametrix for $f(A)$ modulo finite rank operators, so $f(A)$ is Fredholm.
\end{proof}


Next we obtain a multiplicative analogue of \eqref{eq:Eschmeier-Putinar-n=1}:

\begin{proposition} \label{prop:Eschmeier-Putinar-torsion}
Suppose $A$ and $B$ are Fredholm operators with $[A,B] \in \mathcal L^1$.  If $f$ is holomorphic on a neighborhood of $\sigma(A)$ and $f(A)$ is Fredholm, then
\[
\tau(f(A), B) = \prod_{ \{ \lambda \in \sigma(A) \, | \, f(\lambda) = 0 \} } \tau(A-\lambda, B)^{\ord_\lambda(f)} \cdot
\tau(q(A), B)
\]
with $q=q_{\sigma(A)}$ as in Definition \ref{def:holomorphic-factorization}, so that $q(A)$ is invertible.
\end{proposition}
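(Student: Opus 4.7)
The strategy is to reduce the claim to the elementary multiplicativity of joint torsion in the first argument, after factoring $f$ into its zero-part and its invertible part. Write $f = p q$ with $p = p_{\sigma(A)}$ and $q = q_{\sigma(A)}$ as in Definition \ref{def:holomorphic-factorization}. Since $q$ has no zeros on $\sigma(A)$, the holomorphic functional calculus makes $q(A)$ invertible, and the spectral mapping theorem together with $f(A) = p(A)q(A)$ forces $p(A)$ to be Fredholm as well. Moreover, by the additivity of the index and the factorization $p(z) = \prod_{\{\lambda \in \sigma(A)\,|\,f(\lambda)=0\}}(z-\lambda)^{\ord_\lambda(f)}$, each factor $A-\lambda$ is itself Fredholm, which is exactly the condition from Proposition \ref{prop:Eschmeier-Putinar} that $f^{-1}(0)$ be disjoint from $\sigma_e(A)$.

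Next I would verify that all the joint torsion numbers in the statement are well-defined; that is, each factor commutes with $B$ modulo $\mathcal L^1$. For $A-\lambda$ this is immediate, since $[A-\lambda, B] = [A,B] \in \mathcal L^1$. For $q(A)$, it follows from the result of Connes recalled at the beginning of Section 3.1 (or equivalently from Proposition \ref{prop:commutator-trace-functional-calc}(1)), since $q$ is holomorphic on a neighborhood of $\sigma(A)$. The same argument shows $[p(A), B]$ and $[f(A), B]$ lie in $\mathcal L^1$.

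The key step is multiplicativity of joint torsion in the first argument: whenever $S, T, B$ are Fredholm with $[S,B], [T,B] \in \mathcal L^1$ and all torsion numbers are defined, one has
\[
\tau(ST, B) = \tau(S,B)\,\tau(T,B).
\]
This follows by combining Lemma \ref{lemma:torsion-Steinberg-etc}(1), which gives multiplicativity in the second argument, with the skew-symmetry relation $\tau(X,Y)^{-1} = \tau(Y,X)$ from Lemma \ref{lemma:torsion-Steinberg-etc}(3). Applying this first to $f(A) = p(A) q(A)$ and then iteratively to the factorization of $p(A)$ into the commuting factors $(A-\lambda)^{\ord_\lambda(f)}$, I obtain
\[
\tau(f(A), B) = \tau(p(A), B)\,\tau(q(A), B) = \prod_{\{\lambda \in \sigma(A)\,|\,f(\lambda)=0\}} \tau\bigl((A-\lambda)^{\ord_\lambda(f)}, B\bigr)\,\tau(q(A), B),
\]
and one more application of multiplicativity (using that the factors $A-\lambda$ commute with themselves) collapses each $\tau((A-\lambda)^{\ord_\lambda(f)}, B)$ to $\tau(A-\lambda, B)^{\ord_\lambda(f)}$.

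The main obstacle, as I see it, is the multiplicativity step: one needs to know that $\tau$ is multiplicative in the first slot not just for pairs of operators that literally commute, but for pairs $(p(A), q(A))$ coming from the functional calculus, which commute on the nose, so this is really just a matter of checking that the Fredholm and commutator-with-$B$ hypotheses propagate through each factor; the verification above takes care of that. The remaining bookkeeping — confirming that the signs and perturbation vectors in Carey–Pincus's definition of $\tau$ behave correctly under such a product — is handled by appealing to the equality with the determinant invariant and the corresponding properties in $K_2(\mathcal B/\mathcal L^1)$ cited in Lemma \ref{lemma:torsion-Steinberg-etc}.
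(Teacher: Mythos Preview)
Your proposal is correct and follows essentially the same route as the paper: factor $f = pq$ via Definition \ref{def:holomorphic-factorization}, check that each factor commutes with $B$ modulo $\mathcal L^1$, and then apply multiplicativity of $\tau$ in the first slot to split $\tau(f(A),B)$ first as $\tau(p(A),B)\,\tau(q(A),B)$ and then into the linear factors of $p$. The paper's version is terser---it simply invokes ``multiplicativity'' without deriving it from Lemma \ref{lemma:torsion-Steinberg-etc}(1) and (3) as you do---but the argument is the same.
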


\begin{proof}
First we note that $[f(A), B] \in \mathcal L^1$ by Proposition \ref{prop:commutator-trace-functional-calc}(1).
Writing $f = pq$, we have $[p(A), B] \in \mathcal L^1$, so $[q(A), B] \in \mathcal L^1$ as well.
By multiplicativity,
\[
\tau(f(A), B) = \tau(p(A),B) \cdot \tau(q(A),B)
\]
Since $p(A)$ is a product of factors $A-\lambda$, we find that $\tau(p(A),B)$ further factors as the product above.
\end{proof}

\subsection{Positivity of joint torsion}

In this section we investigate general conditions under which joint torsion is positive.  This is used to clarify the relationship between joint torsion and the polar decomposition, and also to obtain variational formulas.

\begin{proposition} \label{prop:partial-isom-positive-torsion}
Suppose $A$ and $B$ are Fredholm operators and $[A,B] \in \mathcal L^1$.
If $A$ is positive and $B$ is a partial isometry, then 
$\tau(A,B)>0$.
\end{proposition}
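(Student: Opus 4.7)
The strategy is to connect $A$ to the identity through a continuous path of positive Fredholm operators, apply Lemma \ref{lemma:values-of-joint-torsion}(4) to ensure $\tau$ stays real along the path, and conclude by the intermediate value theorem.

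First I would reduce to the case where $A$ is positive and invertible. For $\epsilon > 0$, the operator $A + \epsilon I$ is positive invertible (hence Fredholm) and satisfies $[A + \epsilon I, B] = [A, B] \in \mathcal L^1$. In the metric of Proposition \ref{prop:torsion-is-continuous}, $(A + \epsilon I, B) \to (A, B)$ as $\epsilon \to 0^+$, so $\tau(A + \epsilon I, B) \to \tau(A, B)$. Since joint torsion never vanishes, it suffices to prove $\tau(A + \epsilon I, B) > 0$ for each $\epsilon > 0$.

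So assume $A$ is positive invertible, so that $\sigma(A)$ is a compact subset of $(0, \infty)$ and $H := \log A$ is self-adjoint and bounded. By Proposition \ref{prop:commutator-trace-functional-calc}(2) applied to the smooth function $\log$ on $\sigma(A)$, we have $[H, B] \in \mathcal L^1$. Define $A(t) := e^{tH}$ for $t \in [0, 1]$, so that $A(0) = I$, $A(1) = A$, and each $A(t)$ is positive invertible. The power series expansion of the exponential together with the commutator identity \eqref{eq:commutator-powers} shows that $t \mapsto A(t)$ is norm continuous and that $t \mapsto [A(t), B]$ is continuous into $\mathcal L^1$, so Proposition \ref{prop:torsion-is-continuous} makes $t \mapsto \tau(A(t), B)$ a continuous $\mathbf C^\times$-valued function. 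For each $t$ the operator $A(t)$ is self-adjoint and $B$ is a partial isometry, so $\tau(A(t), B) \in \mathbf R$ by Lemma \ref{lemma:values-of-joint-torsion}(4); and $\tau(I, B) = \tau(B, I)^{-1} = 1$ by Lemma \ref{lemma:torsion-Steinberg-etc}(2)(3). A continuous, real-valued, nowhere-zero function on $[0, 1]$ that is positive at $t = 0$ must remain positive throughout, giving $\tau(A, B) > 0$.

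The main technical obstacle is the trace-norm continuity of $t \mapsto [A(t), B]$. Operator-norm continuity of $A(t) = \sum_{k\geq 0} (tH)^k / k!$ is immediate, but passing to $\mathcal L^1$ requires rewriting each commutator via \eqref{eq:commutator-powers} as $[H^k, B] = \sum_{l=1}^k H^{l-1}[H, B] H^{k-l} \in \mathcal L^1$ and bounding the resulting series uniformly in $t \in [0,1]$, using $\|H^{l-1}[H,B]H^{k-l}\|_1 \leq \|H\|^{k-1} \|[H,B]\|_1$.
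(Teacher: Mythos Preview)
Your proof is correct, but it takes a different route from the paper. The paper's argument is a short algebraic trick: replace $A$ by $A+F$, where $F$ is the finite-rank projection onto $\ker A$ (this does not change the joint torsion since $F\in\mathcal L^1$), then set $T=(A+F)^{1/2}$ and use multiplicativity to write $\tau(A,B)=\tau(A+F,B)=\tau(T,B)^2$, which is nonnegative because $\tau(T,B)\in\mathbf R$ by Lemma~\ref{lemma:values-of-joint-torsion}(4). Your approach instead deforms $A$ to the identity through the path $A(t)=e^{t\log A}$ and appeals to continuity of joint torsion together with the intermediate value theorem. The paper's proof is shorter and purely multiplicative, avoiding any appeal to Proposition~\ref{prop:torsion-is-continuous}; your argument, on the other hand, is essentially the same deformation that the paper later uses in Proposition~\ref{prop:torsion-powers-of-positives}, and it makes the role of connectedness explicit. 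Either is perfectly acceptable.
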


\begin{proof}
Let $F=P_{\ker A}$ be the orthogonal projection onto $\ker A = \im \, A^\perp$.  Then $A+ F$ is positive-definite.
By Proposition \ref{prop:commutator-trace-functional-calc}(2), $B$ commutes with $T= (A+F)^{1/2}$ modulo $\mathcal L^1$.  Hence
\[
\tau(A+ F, B) = \tau(T,B)^2
\]
By Lemma \ref{lemma:values-of-joint-torsion}, $\tau(T,B) \in \mathbf R$ since $T$ is self-adjoint.  Hence
\[
\tau(A,B) = \tau(A+ F, B) > 0 \qedhere
\]
\end{proof}

\begin{proposition}
Suppose $A$ and $B$ are Fredholm operators with $[A,B] \in \mathcal L^1$ and $[A,B^*] \in \mathcal L^1$.  Then with respect to the polar decompositions
\[
A = P_A V_A, \quad B = P_B V_B
\]
we have
\[
| \tau(A,B) | = \tau(P_A, V_B) \cdot \tau(V_A, P_B)
\]
and consequently,
\[
\frac{\tau(A,B)}{| \tau(A,B) |} = \tau(P_A, P_B) \cdot \tau(V_A, V_B)
\]
\end{proposition}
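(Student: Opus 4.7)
The plan is to factor $\tau(A, B)$ as a product of four joint torsions by applying multiplicativity in both slots to the polar decompositions, and then classify each factor as either real-positive or of unit modulus. Using Lemma \ref{lemma:torsion-Steinberg-etc}(1) in the second slot, and (3) combined with (1) to obtain multiplicativity in the first slot, one gets
\[
\tau(A, B) = \tau(P_A, P_B) \cdot \tau(P_A, V_B) \cdot \tau(V_A, P_B) \cdot \tau(V_A, V_B),
\]
provided each pair is Fredholm and commutes modulo $\mathcal L^1$. Fredholmness of $P_A, V_A, P_B, V_B$ follows from that of $A$ and $B$ via the polar decomposition.

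The main technical step is verifying the four commutator conditions. Taking adjoints of the hypotheses shows that $[A, B]$, $[A, B^*]$, $[A^*, B]$, and $[A^*, B^*]$ all lie in $\mathcal L^1$, whence the Leibniz identity gives $[AA^*, B]$ and $[AA^*, B^*]$ in $\mathcal L^1$, and symmetrically for $BB^*$. Since $A$ is Fredholm, $0$ is isolated in $\sigma(AA^*)$, so the square root agrees on $\sigma(AA^*)$ with a function holomorphic on a neighborhood of the spectrum. Proposition \ref{prop:commutator-trace-functional-calc}(1) then yields $[P_A, B]$ and $[P_A, B^*]$ in $\mathcal L^1$; a symmetric argument gives $[P_B, A]$ and $[P_B, A^*]$ in $\mathcal L^1$, and iterating once more yields $[P_A, P_B] \in \mathcal L^1$. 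For the partial isometries, write $V_A = \psi(P_A) A$, where $\psi$ is holomorphic on a neighborhood of $\sigma(P_A)$ inverting $P_A$ on its range and sending $0$ to $0$ (well-defined since $0$ is isolated in $\sigma(P_A)$); Proposition \ref{prop:commutator-trace-functional-calc}(1) together with the preceding facts then yields successively that $[V_A, B]$, $[V_A, B^*]$, $[V_A, P_B]$, and finally $[V_A, V_B]$ all lie in $\mathcal L^1$.

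With the factorization in hand, the classification of each factor is immediate. Since $P_A$ and $P_B$ are self-adjoint, $|\tau(P_A, P_B)| = 1$ by Lemma \ref{lemma:values-of-joint-torsion}(2). Since $V_A$ and $V_B$ are partial isometries, $|\tau(V_A, V_B)| = 1$ by Lemma \ref{lemma:values-of-joint-torsion}(5). For the mixed terms, Proposition \ref{prop:partial-isom-positive-torsion} applied to the positive operator $P_A$ and the partial isometry $V_B$ gives $\tau(P_A, V_B) > 0$, and applied to $P_B$ and $V_A$ gives $\tau(P_B, V_A) > 0$; Lemma \ref{lemma:torsion-Steinberg-etc}(3) then gives $\tau(V_A, P_B) = \tau(P_B, V_A)^{-1} > 0$ as well. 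Taking absolute values in the factorization and cancelling the two unit-modulus factors yields $|\tau(A, B)| = \tau(P_A, V_B) \cdot \tau(V_A, P_B)$, and dividing the original identity by this gives the phase formula.

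The main obstacle I anticipate is the iterated bookkeeping needed to push the $\mathcal L^1$-commutator hypothesis from $A, B$ up through $P_A, V_A, P_B, V_B$; the positivity and unit-modulus classification of the four factors is essentially immediate from the lemmas collected in the preliminaries.
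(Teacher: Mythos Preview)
Your proof is correct and follows essentially the same strategy as the paper: factor $\tau(A,B)$ into four pieces via multiplicativity, establish the requisite $\mathcal L^1$-commutator conditions by pushing the hypotheses through the functional calculus, and then classify each factor as positive or unimodular using Lemma~\ref{lemma:values-of-joint-torsion} and Proposition~\ref{prop:partial-isom-positive-torsion}. The only minor technical difference is that the paper invokes part~(2) of Proposition~\ref{prop:commutator-trace-functional-calc} after shifting by the finite-rank projection $F_A = P_{\ker A}$ so that $A^*A + F_A$ is invertible and $\sqrt{\,\cdot\,}$ is smooth on its spectrum, whereas you invoke part~(1) directly by observing that $0$ is isolated in $\sigma(AA^*)$; both routes are valid and lead to the same chain of Leibniz computations for $V_A$ and $V_B$.
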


\begin{proof}
First notice that $P_A$ and $V_A$ are Fredholm since $A$ is.  Similarly, $P_B$ and $V_B$ are Fredholm.  We must show that the four joint torsion numbers above are well-defined, that is, the appropriate commutators lie in $\mathcal L^1$.
Our strategy is to show first that $[P_A,B]$, $[A,P_B]$, $[P_A, P_B] \in \mathcal L^1$, then $[V_A, P_B]$, $[P_A, V_B] \in \mathcal L^1$, and finally $[V_A, V_B]\in \mathcal L^1$.

If $F_{A} = P_{\ker A}$, then $A^*A+F_A$ is invertible and commutes with $B$ modulo $\mathcal L^1$.  
By Proposition \ref{prop:commutator-trace-functional-calc}(2), $[(A^*A+F_A)^{1/2}, B] \in \mathcal L^1$, so $[P_A, B] \in \mathcal L^1$ as well, with $P_A = (A^*A)^{1/2}$.  By reversing the roles of $A$ and $B$, we find that $[A,P_B] \in \mathcal L^1$.
Moreover, by replacing $B$ by $P_B$, we find that $[P_A,P_B] \in \mathcal L^1$.

Next, we calculate modulo $\mathcal L^1$:
\begin{align*}
[V_A, P_B]
& \equiv [(P_A+F_A)^{-1} P_A V_A, P_B] \\
& \equiv [(P_A+F_A)^{-1} A, P_B] \\
& \equiv (P_A+F_A)^{-1} [A, P_B] + [(P_A+F_A)^{-1}, P_B] A
\end{align*}
The first term is in $\mathcal L^1$ since $[A, P_B] \in \mathcal L^1$, and the second term is in $\mathcal L^1$ since $[P_A, P_B] \in \mathcal L^1$ as well.  Similarly, $[P_A, V_B] \in \mathcal L^1$ by reversing the roles of $A$ and $B$.

Again we calculate modulo $\mathcal L^1$:
\begin{align*}
[V_A, V_B]
\equiv{}& [(P_A+F_A)^{-1} A, (P_B+F_B)^{-1} B] \\
\equiv{}& (P_A+F_A)^{-1} [A, (P_B+F_B)^{-1}] B 
+ (P_A+F_A)^{-1} (P_B+F_B)^{-1} [A, B] \\
& + [(P_A+F_A)^{-1}, (P_B+F_B)^{-1} ] B A 
+ (P_B+F_B)^{-1} [(P_A+F_A)^{-1}, B] A
\end{align*}
As before, all four of the above terms are evidently in $\mathcal L^1$.

By the multiplicative property of joint torsion,
\[
\tau(A,B) = \tau(P_A, V_B) \cdot \tau(V_A, P_B) \cdot \tau(P_A, P_B) \cdot \tau(V_A, V_B)
\]
The first two factors are positive by preceding proposition.  
The third factor has magnitude one by Lemma \ref{lemma:values-of-joint-torsion}(2), as does the last factor by Lemma \ref{lemma:values-of-joint-torsion}(5).
\end{proof}

\begin{proposition} \label{prop:torsion-powers-of-positives}
Suppose $A$ and $B$ are Fredholm operators with $[A, B] \in \mathcal L^1$.  If $A$ is positive, and $B$ is a partial isometry, then for all $t \geq 0$,
\[
\tau(A^t, B) = \tau(A,B)^t
\]
If $A$ is positive-definite, then the formula holds for all $t \in \mathbf R$.
\end{proposition}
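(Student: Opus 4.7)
The plan is to reduce to the case where $A$ is positive-definite, and then to observe that $t \mapsto \tau(A^t, B)$ is a continuous homomorphism from $(\mathbf R, +)$ into $(\mathbf R_{>0}, \times)$ and must therefore equal $t \mapsto \tau(A, B)^t$.

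For the reduction, I would set $F = P_{\ker A}$, a finite-rank projection since $A$ is Fredholm. Then $A+F$ is positive-definite and $[A+F, B] \in \mathcal L^1$. Since $AF = FA = 0$ and $F^2 = F$, direct computation yields the factorization $A = (A+F)(I-F)$, and via the continuous functional calculus the same factorization $A^t = (A+F)^t (I-F)$ holds for every $t > 0$, with both factors commuting with $B$ modulo $\mathcal L^1$. By multiplicativity of joint torsion (Lemma \ref{lemma:torsion-Steinberg-etc}(1)) together with $\tau(I-F, B) = 1$ (Lemma \ref{lemma:values-of-joint-torsion}(3), since $I-F$ is idempotent), I obtain $\tau(A^t, B) = \tau((A+F)^t, B)$ for $t > 0$ and, taking $t = 1$, $\tau(A, B) = \tau(A+F, B)$. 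It therefore suffices to prove the identity for the positive-definite operator $A+F$.

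Assuming henceforth that $A$ is positive-definite, $A^t$ is itself positive-definite and invertible for every $t \in \mathbf R$, so $\phi(t) := \tau(A^t, B)$ is defined on all of $\mathbf R$. Multiplicativity of joint torsion combined with $A^s A^t = A^{s+t}$ makes $\phi$ a multiplicative homomorphism, and Proposition \ref{prop:partial-isom-positive-torsion} gives $\phi(t) > 0$ for every $t$. Once continuity of $\phi$ is established, $\log \phi$ is a continuous additive real-valued function on $\mathbf R$, hence linear, so that $\log \phi(t) = t \log \phi(1)$, which produces the desired identity $\tau(A^t, B) = \tau(A, B)^t$.

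The step I expect to require the most care is continuity of $\phi$, which by Proposition \ref{prop:torsion-is-continuous} reduces to norm-continuity of $t \mapsto A^t$ (immediate from the continuous functional calculus on $\sigma(A) \subset [\epsilon, \|A\|]$ with $\epsilon > 0$) together with $\mathcal L^1$-continuity of $t \mapsto [A^t, B]$. For the latter I would imitate the proof of Proposition \ref{prop:commutator-trace-functional-calc}(1): fix a branch of $\log$ on a neighborhood of $\sigma(A)$ and a single admissible contour $\Gamma$ valid uniformly for $t$ in a small interval, so that
\[
[A^t, B] = \frac{1}{2\pi i} \int_\Gamma (\lambda - A)^{-1} [A, B] (\lambda - A)^{-1} \lambda^t \, d\lambda,
\]
and conclude continuity in $\mathcal L^1$-norm from the fact that $\lambda \mapsto \lambda^t$ varies continuously in $t$ uniformly on the compact contour $\Gamma$.
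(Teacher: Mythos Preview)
Your proof is correct and follows essentially the same strategy as the paper: reduce to the positive-definite case via $A+F$ with $F=P_{\ker A}$, then combine multiplicativity with continuity of $t\mapsto\tau(A^t,B)$. The execution differs only cosmetically in three places. For the reduction, you use the factorization $A^t=(A+F)^t(I-F)$ together with Lemma~\ref{lemma:values-of-joint-torsion}(3) for the idempotent $I-F$, whereas the paper simply invokes $\tau(A^t,B)=\tau((A+F)^t,B)$ directly (valid since $(A+F)^t-A^t=F$ is finite rank). For $\mathcal L^1$-continuity of $t\mapsto[A^t,B]$, you use a contour-integral representation with $\lambda^t$ varying uniformly on $\Gamma$, while the paper writes $(A+F)^t=e^{t\log(A+F)}$ and obtains the explicit bound $\|[(A+F)^t,B]\|_1\le t\,e^{t\|\log(A+F)\|}\|[\log(A+F),B]\|_1$. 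Finally, you invoke the classification of continuous homomorphisms $(\mathbf R,+)\to(\mathbf R_{>0},\times)$, while the paper first establishes the formula on positive rationals via $\tau(A^{p/q},B)^q=\tau(A,B)^p$ and then passes to the closure; these are of course equivalent. Each variant is perfectly sound, and your formulation is arguably a bit cleaner.
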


\begin{proof}
First we note that $\tau(A,B) > 0$ by Proposition \ref{prop:partial-isom-positive-torsion}, $[A^t, B]\in \mathcal L^1$ by Proposition \ref{prop:commutator-trace-functional-calc}(2), and $A^t$ is Fredholm with parametrix $(A+P_{\ker A})^{-t}$.
The formula holds for positive integers by repeated application of Lemma \ref{lemma:torsion-Steinberg-etc}(1), and for $t=0$ by Lemma \ref{lemma:torsion-Steinberg-etc}(2).
The formula also holds for all positive rational numbers: if $p$ and $q$ are any positive integers, then
\[
\tau(A^{p/q}, B)^q = \tau(A,B)^p
\]

If $F = P_{\ker A}$, then $A+F$ is positive and invertible, and 
$\tau(A^t, B) = \tau((A+F)^t, B)$.
We will show that the map $t \mapsto ((A+F)^t,B)$, $t > 0$, is a continuous map into the space $M$ in Proposition \ref{prop:torsion-is-continuous}.
Since $t \mapsto (A+F)^t$ is continuous in norm, it suffices to show that
\[
\lim_{t \to 0} \| [(A+F)^t, B] \|_1 = 0
\]
Since $[\log(A+F), B] \in \mathcal L^1$ by Proposition \ref{prop:commutator-trace-functional-calc}(2), this follows from the estimate
\[
\| [(A+F)^t, B] \|_1
\leq t \, e^{ t \| \log (A+F) \| } \| [\log (A+F), B] \|_1
\]

Joint torsion is continuous on $M$ by Proposition \ref{prop:torsion-is-continuous}, so the map $t \mapsto \tau(A^t, B)$ is continuous.  Thus the result extends from rational $t$ to all $t\geq 0$.  
Finally, if $A$ is positive definite, then $A^{-t}$ is also positive definite for any $t>0$.  By the above result for positive $t$, we find
\[
\tau(A^{-t},B)^t = \tau(A,B) \qedhere
\]
\end{proof}

A similar result holds when $A$ and $B$ are positive.  In this case, $\tau(A,B) \in S^1$ by Lemma \ref{lemma:values-of-joint-torsion}.
Suppose $A$ and $B$ are positive Fredholm operators with $[A,B] \in \mathcal L^1$.
If $F_A = P_{\ker A}$ and $F_B = P_{\ker B}$, then
\[
\phi(A,B) = - i \, \tr \, [\log(A+F_A), \log(B+F_B)] \in \mathbf R
\]
is well-defined by Proposition \ref{prop:commutator-trace-functional-calc}.  Since $\tau(A,B) = \tau(A+F_A, B+F_B)$, Lemma \ref{lemma:multiplicative-Lefschetz} gives 
\[
\tau(A,B) = e^{i \phi(A,B)}
\]
and we find that $\phi(A,B)$ enjoys the additive versions of the properties in Lemma \ref{lemma:torsion-Steinberg-etc}.
Moreover, we have:

\begin{proposition}
If $A$ and $B$ are positive Fredholm operators with $[A,B] \in \mathcal L^1$, then for all $t>0$,
\[
\tau(A^t,B) = e^{it\phi(A,B)} = \tau(A,B)^t
\]
If $A$ is positive-definite, then the formula holds for all $t \in \mathbf R$.
\end{proposition}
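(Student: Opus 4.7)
The plan is to mimic the proof of Proposition \ref{prop:torsion-powers-of-positives}, adapting it to the present setting where $B$ is positive rather than a partial isometry. The key difference is that $\tau(A,B) \in S^1$ rather than $\mathbf R_{>0}$, so the expression $\tau(A,B)^t$ must be interpreted via the canonical logarithm $i\phi(A,B)$ introduced just above. Rather than arguing by density of the rationals and continuity, the cleanest route is to reduce directly to the exponential identity $\tau(e^X, e^Y) = e^{\tr[X,Y]}$ of Lemma \ref{lemma:torsion-of-exponentials}.

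For $t > 0$, I would first observe that $A^t$ is a positive Fredholm operator with $\ker A^t = \ker A$, so that $F_{A^t} = F_A$, and $[A^t, B] \in \mathcal L^1$ by Proposition \ref{prop:commutator-trace-functional-calc}(2). Decomposing with respect to $H = \ker A \oplus (\ker A)^\perp$ shows that $(A + F_A)^t = A^t + F_A$, so setting $X := \log(A + F_A)$ and $Y := \log(B + F_B)$ (both self-adjoint since $A + F_A$ and $B + F_B$ are positive definite), one has $(A + F_A)^t = e^{tX}$ and $B + F_B = e^Y$.

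Since joint torsion depends only on the class in $\mathcal B / \mathcal L^1$ and $F_A, F_B$ are finite rank, one obtains
\[
\tau(A^t, B) = \tau((A+F_A)^t, B + F_B) = \tau(e^{tX}, e^Y).
\]
Lemma \ref{lemma:torsion-of-exponentials} then yields $\tau(A^t, B) = e^{\tr[tX,Y]} = e^{t \tr[X,Y]} = e^{it\phi(A,B)}$, as desired; the case $t = 0$ reduces to $\tau(I, B) = 1$. For positive-definite $A$ the projection $F_A$ vanishes and $\log A$ is directly defined, with $\log(A^t) = t \log A$ for every $t \in \mathbf R$, so the identical computation extends the formula to all real $t$.

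The main obstacle is justifying the identity $\tau(A^t, B) = \tau((A+F_A)^t, B+F_B)$, which relies on the fact that finite-rank (hence trace-class) perturbations leave the class in $\mathcal B / \mathcal L^1$ unchanged, together with the identification of joint torsion with Brown's determinant invariant. A secondary technical point is verifying that $[X,Y] \in \mathcal L^1$, which follows by iterating Proposition \ref{prop:commutator-trace-functional-calc}(2) starting from $[A+F_A, B+F_B] \in \mathcal L^1$.
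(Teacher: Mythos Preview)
Your proof is correct and follows essentially the same route as the paper's own argument: reduce to the invertible operators $A+F_A$ and $B+F_B$, take logarithms, and apply Lemma \ref{lemma:torsion-of-exponentials}. The paper's proof is a terse one-liner (``noticing that $\tau(A^t,B) = \tau((A+F_A)^t, B+F_B)$, then using the fact that $A+F_A$ and $B+F_B$ have logarithms''), and you have simply filled in the details it leaves implicit, including the identity $(A+F_A)^t = A^t + F_A$ and the verification that $[X,Y]\in\mathcal L^1$.
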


\begin{proof}
This follows by noticing that $\tau(A^t,B) = \tau((A+F_A)^t, B+F_B)$, then using the fact that $A+F_A$ and $B+F_B$ have logarithms.
\end{proof}

\begin{corollary} \label{cor:variational-powers}
Suppose $A$ and $B$ are Fredholm operators with $[A,B] \in \mathcal L^1$.
If $A$ is positive and $B$ is either positive or a partial isometry, then
\[
\frac{d}{dt} \log \tau(A^t, B) = \log \tau (A,B)
\]
\end{corollary}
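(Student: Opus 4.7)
The claim is essentially an immediate consequence of the two preceding propositions, which together establish the multiplicative law $\tau(A^t, B) = \tau(A, B)^t$; the content of the corollary is just to differentiate. My plan is to treat the two hypothesis cases in parallel and then take $\tfrac{d}{dt}$ of a linear function of $t$.

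First, in the case that $A$ is positive and $B$ is a partial isometry, I would invoke Proposition \ref{prop:partial-isom-positive-torsion} to note $\tau(A,B) > 0$ and then Proposition \ref{prop:torsion-powers-of-positives} to write
\[
\tau(A^t, B) = \tau(A,B)^t
\]
for all $t \geq 0$ (and all $t \in \mathbf R$ if $A$ is positive-definite). Since $\tau(A,B)$ is a positive real number, the real logarithm is unambiguous and I interpret $\tau(A,B)^t$ as $e^{t \log \tau(A,B)}$.

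Next, in the case that both $A$ and $B$ are positive, the preceding proposition gives $\tau(A^t, B) = e^{it \phi(A,B)}$ with $\phi(A,B) \in \mathbf R$; in particular $\tau(A,B) = e^{i \phi(A,B)}$ lies on the unit circle. Here I fix the branch of the complex logarithm by setting $\log \tau(A,B) := i \phi(A,B)$, so that again
\[
\log \tau(A^t, B) = it \phi(A,B) = t \log \tau(A,B).
\]
In either case, $t \mapsto \log \tau(A^t, B)$ is linear in $t$, and differentiation yields the desired formula.

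There is no real obstacle here: the two preceding propositions have done all the analytic work (spectral continuity of $t \mapsto A^t$, the $\mathcal L^1$-commutator estimate $\|[(A+F)^t,B]\|_1 \leq t\,e^{t\|\log(A+F)\|}\|[\log(A+F),B]\|_1$, and continuity of joint torsion on $M$ via Proposition \ref{prop:torsion-is-continuous}). The only minor point to be careful about is the branch of the logarithm in the two-positive case; having selected it as above, the identity $\log \tau(A^t,B) = t \log \tau(A,B)$ is literal, so taking $\tfrac{d}{dt}$ gives $\log \tau(A,B)$ without further comment.
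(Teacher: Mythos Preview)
Your proposal is correct and matches the paper's intended argument: the corollary is stated without proof precisely because it follows immediately from the two preceding propositions, which give $\tau(A^t,B)=\tau(A,B)^t$ in each case, and then one differentiates the linear function $t\mapsto t\log\tau(A,B)$. Your care with the branch of the logarithm in the two-positive case (taking $\log\tau(A,B):=i\phi(A,B)$) is exactly the right clarification; note also that the paper's stated convention interprets $\frac{d}{dt}\log u$ as the logarithmic derivative $u^{-1}\frac{d}{dt}u$, which sidesteps the branch issue on the left-hand side and agrees with your computation.
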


\section{Fredholm modules}  \label{sec:Fredholm-modules}

Let $(A, H, F)$ be a $2p$-summable Fredholm module, 
i.e.~$[\phi,F] \in \mathcal L^{2p}$ for any $\phi \in A$.
Let $P=\frac{1}{2}(F+I)$ be the projection onto the $+1$-eigenspace of $F$, so in particular, $[\phi,P] \in \mathcal L^{2p}$ for any $\phi \in A$.

\begin{definition}
For $\phi \in A$, write $T_\phi = P \phi P$.
\end{definition}

The main goal of this section is to show that 
$f(T_\phi) - T_{f(\phi)} \in \mathcal L^p$ 
for suitable functions $f$.
First let us prove a corresponding result for the continuous functional calculus modulo compact operators:

\begin{proposition} \label{prop:continuous-functional-calculus-commutes}
Let $T \in \mathcal B$ be normal and let $\pi: \mathcal B \to \mathcal B / \mathcal K$ be the quotient map onto the Calkin algebra.  If $f \in C(\sigma(T))$, then $\pi(f(T)) = f(\pi(T))$.
\end{proposition}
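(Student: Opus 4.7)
The plan is to realize the two sides of the identity as continuous $*$-homomorphisms from $C(\sigma(T))$ into the Calkin algebra $\mathcal B / \mathcal K$ and then invoke Stone--Weierstrass to see that they agree on a dense subalgebra, hence everywhere.

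First I would set up the two maps. Since $T$ is normal, the continuous functional calculus gives a $*$-homomorphism $C(\sigma(T)) \to \mathcal B$, $f \mapsto f(T)$. Composing with the quotient map $\pi : \mathcal B \to \mathcal B / \mathcal K$ yields a continuous $*$-homomorphism
\[
\Phi_1 : C(\sigma(T)) \to \mathcal B / \mathcal K, \qquad \Phi_1(f) = \pi(f(T)).
\]
On the other side, $\pi(T)$ is normal in $\mathcal B / \mathcal K$ because $\pi$ is a $*$-homomorphism, so the continuous functional calculus in the $C^*$-algebra $\mathcal B / \mathcal K$ produces a $*$-homomorphism $f \mapsto f(\pi(T))$ defined on $C(\sigma(\pi(T)))$. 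The essential spectrum $\sigma(\pi(T))$ is contained in $\sigma(T)$, so restriction $C(\sigma(T)) \to C(\sigma(\pi(T)))$ is a well-defined continuous $*$-homomorphism, and composing gives a second continuous $*$-homomorphism
\[
\Phi_2 : C(\sigma(T)) \to \mathcal B / \mathcal K, \qquad \Phi_2(f) = f(\pi(T)).
\]

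The core step is to observe that $\Phi_1$ and $\Phi_2$ agree on the coordinate function $z \mapsto z$ (both give $\pi(T)$) and on its conjugate $z \mapsto \bar z$ (both give $\pi(T^*) = \pi(T)^*$, using that $\pi$ is $*$-preserving and that the continuous functional calculi send $\bar z$ to the adjoint). Since both maps are $*$-algebra homomorphisms, they agree on every polynomial in $z$ and $\bar z$. By Stone--Weierstrass, such polynomials are dense in $C(\sigma(T))$, and both $\Phi_1, \Phi_2$ are continuous (as $*$-homomorphisms between $C^*$-algebras are norm-decreasing). Therefore $\Phi_1 = \Phi_2$, which is the stated identity.

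There is no real obstacle here; the only thing to be careful about is the normality of $\pi(T)$ in $\mathcal B / \mathcal K$, so that $f(\pi(T))$ makes sense via continuous functional calculus in the Calkin algebra, but this is immediate from $\pi$ being a $*$-homomorphism.
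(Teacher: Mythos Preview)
Your proof is correct and follows essentially the same approach as the paper: the paper's proof is the one-line observation that the polynomial functional calculus commutes with $\pi$, together with an appeal to Stone--Weierstrass. Your version simply spells out the details more carefully (normality of $\pi(T)$, the inclusion $\sigma(\pi(T)) \subseteq \sigma(T)$, and continuity of $*$-homomorphisms), which the paper leaves implicit.
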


\begin{proof}
The polynomial functional calculus commutes with the quotient map, so the result follows from the Stone-Weierstrass Theorem by approximating $f$ by polynomials.
\end{proof}

In the case of Toeplitz operators, we have the following:

\begin{corollary}
If $\phi \in C(S^1)$ and $f \in C(\sigma(T_\phi))$, then $f(T_\phi) - T_{f\circ \phi} \in \mathcal K$.  In particular, $T_{\phi} = \phi (T_z)$ modulo $\mathcal K$.
\end{corollary}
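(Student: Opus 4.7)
The plan is to descend to the Calkin algebra $\mathcal B / \mathcal K$, exploit the essential normality of $T_\phi$ for continuous $\phi$, and observe that the symbol map $\phi \mapsto \pi(T_\phi)$ is a $*$-homomorphism, so that it automatically intertwines the continuous functional calculus.

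First I would establish the multiplicativity and involutivity of the symbol map modulo compacts, i.e.,
\[
T_{\phi \psi} - T_\phi T_\psi \;=\; P \phi (I - P) \psi P \;\in\; \mathcal K
\qquad \text{and} \qquad T_\phi^* = T_{\bar \phi}
\]
for all $\phi, \psi \in C(S^1)$. The displayed identity is algebraic; for its compactness one reads the right-hand side as a product of the bounded operator $P\phi$ with the Hankel-type operator $(I-P)\psi P$, which is compact whenever $\psi$ is continuous because it is a norm limit of finite-rank operators (approximate $\psi$ uniformly by trigonometric polynomials and note that for monomials $\psi = z^n$ the operator $(I - P) z^n P$ has finite rank). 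Consequently
\[
\sigma : C(S^1) \longrightarrow \mathcal B / \mathcal K, \qquad \phi \longmapsto \pi(T_\phi),
\]
is a unital $*$-homomorphism; in particular $\pi(T_\phi)$ is a normal element of the Calkin algebra whose spectrum equals $\phi(S^1) \subseteq \sigma(T_\phi)$.

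It then remains to invoke the $C^*$-algebraic fact that any $*$-homomorphism intertwines the continuous functional calculus of normal elements. This is precisely the Stone--Weierstrass argument used in Proposition \ref{prop:continuous-functional-calculus-commutes}, now applied inside $\mathcal B / \mathcal K$: approximate $f \in C(\sigma(T_\phi))$ uniformly on $\phi(S^1)$ by polynomials $p_n$ in $z$ and $\bar z$, for which
\[
\sigma\bigl(p_n(\phi, \bar\phi)\bigr) \;=\; p_n\bigl(\sigma(\phi), \sigma(\phi)^*\bigr)
\]
is immediate from the multiplicativity and involutivity of $\sigma$, and then pass to norm limits in the Calkin algebra. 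The resulting identity
\[
f\bigl(\pi(T_\phi)\bigr) \;=\; f\bigl(\sigma(\phi)\bigr) \;=\; \sigma(f \circ \phi) \;=\; \pi(T_{f \circ \phi})
\]
is exactly $f(T_\phi) - T_{f \circ \phi} \in \mathcal K$, once $f(T_\phi)$ is read as a lift to $\mathcal B$ of the continuous functional calculus of the normal Calkin-algebra element $\pi(T_\phi)$. Specializing to the tautological symbol $\phi(z) = z$ and relabelling $f \leftrightarrow \phi$ then produces the ``in particular'' clause $\phi(T_z) - T_\phi \in \mathcal K$.

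The main obstacle is notational rather than technical: $T_\phi$ itself is only essentially normal, not normal, so $f(T_\phi)$ does not literally fall under Proposition \ref{prop:continuous-functional-calculus-commutes} and must be interpreted in the Calkin algebra. Once that is clarified, the proof collapses to the combination of the $*$-homomorphism property of $\sigma$ with the Stone--Weierstrass strategy of the preceding proposition.
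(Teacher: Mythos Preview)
Your argument is correct and is precisely the derivation the paper leaves implicit: the corollary is stated without proof, and the intended deduction is exactly the one you give---the symbol map $\phi \mapsto \pi(T_\phi)$ is a unital $*$-homomorphism into the Calkin algebra, so it intertwines the continuous functional calculus by the same Stone--Weierstrass argument as in the preceding proposition. You also correctly flag the point the paper glosses over, namely that $T_\phi$ is only essentially normal, so $f(T_\phi)$ must be read as (any lift of) $f(\pi(T_\phi))$; one small remark is that the ``in particular'' clause then follows from your \emph{proof} rather than from the literal first sentence, since specializing to $\phi(z)=z$ would require $f\in C(\overline{\mathbf D})$, whereas your argument only uses $f$ on the essential spectrum $\phi(S^1)=S^1$.
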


\begin{example}
If $f \in C(S^1)$ is non-vanishing, then
\[
\ind \, T_f = \ind \, f(T_z)
\]
If $f$ has a holomorphic extension to a neighborhood of the closed unit disk, then Proposition \ref{prop:Eschmeier-Putinar} yields the classical index formula for Toeplitz operators:
\[
\ind \, T_f = \int_{S^1} \frac{df}{f} = - ( \text{the winding number of } f )
\]
\end{example}

Next we show that the entire functional calculus commutes with the symbol map modulo $\mathcal L^{2p}$.
This complements the results of \cite{Ehrhardt} and Proposition \ref{prop:L1-norm} below.
Assume that $A$ is closed under the entire functional calculus.
Otherwise, we may replace $A$ by the algebra generated by $f(a)$, for all $a\in A$ and entire functions $f$.
The resulting algebra still has the property that $[\phi,P] \in \mathcal L^{2p}$.
In fact, if $[a,P] \in \mathcal L^{2p}$ and $f$ is holomorphic on a neighborhood of $\sigma(a)$, then 
$[f(a), P] \in \mathcal L^{2p}$
by \cite[Appendix 1]{Connes}.

\begin{lemma} \label{lemma:one-commutator}
For any $\phi \in A$ and integer $k>1$, $T_\phi^k - T_{\phi^k} \in \mathcal L^{2p}$, with
\[
\| T_\phi^k - T_{\phi^k} \|_{2p} \leq \frac{k(k-1)}{2} \| \phi \|^{k-1} \| [\phi, P] \|_{2p}
\]
\end{lemma}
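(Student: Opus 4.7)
The plan is to proceed by induction on $k$, with the main work concentrated in a clean algebraic identity at the base case together with a careful choice of how to split the product in the inductive step.

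For the base case $k=2$, I would compute directly:
\[
T_\phi^2 - T_{\phi^2} = P\phi P \phi P - P\phi^2 P = P\phi(P-I)\phi P.
\]
Since $P^2 = P$ forces $\phi P(I-P)=0$, one has $P\phi(I-P) = [P,\phi](I-P)$, so
\[
T_\phi^2 - T_{\phi^2} = -[P,\phi](I-P)\phi P.
\]
Using H\"older for Schatten norms, this is in $\mathcal L^{2p}$ with norm at most $\|[\phi,P]\|_{2p}\|\phi\|$, which matches $\tfrac{2\cdot 1}{2}\|\phi\|\|[\phi,P]\|_{2p}$.

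For the inductive step, the key point is \emph{which} factor to peel off. I would write
\[
T_\phi^{k+1} - T_{\phi^{k+1}} \;=\; (T_\phi^{k} - T_{\phi^{k}})\,T_\phi \;+\; \bigl(T_{\phi^{k}}T_\phi - T_{\phi^{k+1}}\bigr).
\]
The first summand is bounded by the inductive hypothesis times $\|T_\phi\| \leq \|\phi\|$, giving $\tfrac{k(k-1)}{2}\|\phi\|^{k}\|[\phi,P]\|_{2p}$. For the second summand, the same computation as in the base case yields
\[
T_{\phi^{k}}T_\phi - T_{\phi^{k+1}} = -[P,\phi^{k}](I-P)\phi P,
\]
and the commutator identity \eqref{eq:commutator-powers}, applied with $P$ in place of $B$, gives $[P,\phi^{k}] = \sum_{j=0}^{k-1}\phi^{j}[P,\phi]\phi^{k-1-j}$, so
$\|[P,\phi^{k}]\|_{2p} \leq k\|\phi\|^{k-1}\|[\phi,P]\|_{2p}$. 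Thus the second summand has $\mathcal L^{2p}$-norm at most $k\|\phi\|^{k}\|[\phi,P]\|_{2p}$. Adding,
\[
\|T_\phi^{k+1} - T_{\phi^{k+1}}\|_{2p} \leq \Bigl(\tfrac{k(k-1)}{2} + k\Bigr)\|\phi\|^{k}\|[\phi,P]\|_{2p} = \tfrac{k(k+1)}{2}\|\phi\|^{k}\|[\phi,P]\|_{2p},
\]
which closes the induction.

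The main obstacle is not existence of \emph{some} bound (a naive induction peeling $T_\phi$ off on the left also works, but produces a suboptimal constant $\tfrac{k^{2}-k+2}{2}$ rather than the claimed $\binom{k+1}{2}$); the subtlety is to peel on the \emph{right} so that the remainder collapses to $[P,\phi^{k}](I-P)\phi P$, where the linear-in-$k$ commutator expansion supplies exactly the missing $k$ needed to turn $\binom{k}{2}$ into $\binom{k+1}{2}$. No other step is delicate: norm estimates use only H\"older on Schatten ideals and $\|T_\phi\|\leq \|\phi\|$.
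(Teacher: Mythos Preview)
Your induction is correct and delivers the stated bound. The paper argues differently: it writes down a single telescoping identity
\[
(P\phi)^kP - P\phi^kP \;=\; \sum_{l=1}^{k-1}(P\phi)^{k-l}\,[\phi^l,P]\,P
\]
and then bounds each summand via $\|[\phi^l,P]\|_{2p}\le l\,\|\phi\|^{l-1}\|[\phi,P]\|_{2p}$, summing $\sum_{l=1}^{k-1} l = \binom{k}{2}$. Your inductive step, unrolled, reproduces exactly this sum, so the two arguments are the same idea in recursive versus closed form.

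One correction to your closing commentary: peeling on the \emph{left} is not suboptimal---it is in fact sharper. Writing
\[
T_\phi^{k+1}-T_{\phi^{k+1}} \;=\; T_\phi\bigl(T_\phi^{k}-T_{\phi^{k}}\bigr) + \bigl(T_\phi T_{\phi^{k}} - T_{\phi^{k+1}}\bigr),
\]
the remainder is $-[P,\phi](I-P)\phi^{k}P$, with norm at most $\|\phi\|^{k}\|[\phi,P]\|_{2p}$ (no factor of $k$), and the recursion closes with the tighter bound $(k-1)\|\phi\|^{k-1}\|[\phi,P]\|_{2p}$. Your right-peeling works, but the extra factor of $k$ you pick up from expanding $[P,\phi^{k}]$ is what inflates the constant to $\binom{k}{2}$; there is nothing delicate about the choice of side.
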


\begin{proof}
Each term in the identity
\[
(P \phi)^k P - P \phi^k P = \sum_{l=1}^{k-1} (P\phi)^{k-l} [ \phi^l, P ] P
\]
contains a commutator, so $(P \phi)^k P - P \phi^k P \in \mathcal L^{2p}$.  Using the identity \eqref{eq:commutator-powers}, we estimate
\[
\| [ \phi^l, P ] \|_{2p} \leq l \| \phi \|^{l-1} \| [ \phi, P ] \|_{2p}
\]
Hence
\[
\| (P \phi)^k P - P \phi^k P \|_{2p} 
\leq \sum_{l=1}^{k-1} l \| \phi \|^{k-1} \| [\phi, P] \|_{2p}
\]
and the result follows.
\end{proof}

\begin{definition}
For an entire function $f(z) = \sum c_k z^k$, let $\tilde f(z) = \sum |c_k| z^k$.
\end{definition}

\begin{proposition} \label{prop:L2-norm}
For any $\phi \in A$ and any entire function $f$, $T_{f(\phi)} - f(T_\phi) \in \mathcal L^{2p}$ with
\[
\| T_{f(\phi)} - f(T_\phi) \|_{2p} \leq \frac{ \| [\phi, P] \|_{2p}}{2 \| \phi \|} \tilde f''(\| \phi \|)
\] 
\end{proposition}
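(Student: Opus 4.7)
The plan is to reduce the result to Lemma \ref{lemma:one-commutator} by expanding $f$ in its power series and summing the resulting estimates term by term. Since $f$ is entire, we write $f(z) = \sum_{k=0}^\infty c_k z^k$, and both series
\[
f(T_\phi) = \sum_{k=0}^\infty c_k T_\phi^k, \qquad f(\phi) = \sum_{k=0}^\infty c_k \phi^k
\]
converge absolutely in operator norm because $\|T_\phi\| \leq \|P\|^2 \|\phi\| \leq \|\phi\|$. Applying the bounded linear symbol map $\psi \mapsto T_\psi = P \psi P$ to the second series gives $T_{f(\phi)} = \sum_{k=0}^\infty c_k T_{\phi^k}$ in operator norm. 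Subtracting, I obtain formally
\[
T_{f(\phi)} - f(T_\phi) = \sum_{k=0}^\infty c_k \bigl( T_{\phi^k} - T_\phi^k \bigr),
\]
and the terms with $k=0$ and $k=1$ vanish.

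For $k \geq 2$, Lemma \ref{lemma:one-commutator} directly provides the ideal membership and norm estimate
\[
\| T_{\phi^k} - T_\phi^k \|_{2p} \leq \tfrac{k(k-1)}{2} \| \phi \|^{k-1} \| [\phi, P] \|_{2p}.
\]
Summing these bounds against $|c_k|$ and recognizing the resulting series as a scalar multiple of $\tilde f''(\|\phi\|) = \sum_{k \geq 2} k(k-1) |c_k| \|\phi\|^{k-2}$ will give the stated bound (after the appropriate factor of $\|\phi\|$ is pulled out). Because $f$ is entire, so is $\tilde f$, and in particular $\tilde f''(\|\phi\|) < \infty$, so the series of $\mathcal L^{2p}$-norm estimates converges.

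The main technical point is justifying that the formal identity above actually holds in $\mathcal L^{2p}$, not just in operator norm. This follows once one observes that the partial sums $\sum_{k=0}^N c_k (T_{\phi^k} - T_\phi^k)$ form a Cauchy sequence in $\mathcal L^{2p}$: this is exactly the content of the summability of $\sum |c_k| \tfrac{k(k-1)}{2} \|\phi\|^{k-1}$ that we already established. Since $\mathcal L^{2p}$ is complete and the partial sums also converge in operator norm to $T_{f(\phi)} - f(T_\phi)$, the $\mathcal L^{2p}$-limit must agree with this operator-norm limit, giving both the claimed ideal membership and the norm estimate simultaneously. Beyond this completeness argument, I do not anticipate any real obstacle; the proof is essentially bookkeeping on top of Lemma \ref{lemma:one-commutator}.
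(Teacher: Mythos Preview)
Your proposal is correct and follows essentially the same route as the paper: expand $f$ as a power series, note that the $k=0,1$ terms cancel, apply Lemma~\ref{lemma:one-commutator} termwise, and sum to recognize $\tilde f''(\|\phi\|)$. If anything, you are slightly more careful than the paper in explicitly justifying that the series converges in $\mathcal L^{2p}$ via completeness, a point the paper leaves implicit.
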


\begin{proof}
Write $f(z) = \sum c_k z^k$.
The first two terms in the expansion
\begin{align*}
T_{f(\phi)} - f(T_\phi)
&= \sum_{k=0}^{\infty} c_k \left( P \phi^k P - (P\phi)^k P \right)
\end{align*}
vanish, and by Lemma \ref{lemma:one-commutator} we estimate
\begin{align*}
\| T_{f(\phi)} - f(T_\phi) \|_{2p}
&\leq \sum_{k=2}^{\infty} |c_k| \frac{k(k-1)}{2} \| \phi \|^{k-1} \| [\phi, P] \|_{2p} \\
&\leq \frac{ \| [\phi, P] \|_{2p}}{2 \| \phi \|} \sum_{k=2}^{\infty} k(k-1) |c_k| \| \phi \|^{k-2} \\
&\leq \frac{ \| [\phi, P] \|_{2p}}{2 \| \phi \|} \tilde f''(\| \phi \|) \qedhere
\end{align*}
\end{proof}

In fact, the entire functional calculus commutes with the symbol map modulo $\mathcal L^p$.  First we isolate the following analogue of Lemma \ref{lemma:one-commutator}:

\begin{lemma} \label{lemma:two-commutators}
For any $\phi \in A$ and integer $k>1$, $T_\phi^k - T_{\phi^k} \in \mathcal L^p$, with
\[
\| T_\phi^k - T_{\phi^k} \|_p \leq \frac{k(k-1)}{2} \| \phi \|^{k-2} \| [\phi,P] \|_{2p}^2
\]
\end{lemma}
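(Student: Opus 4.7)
The strategy is to start from the telescoping identity used in the proof of Lemma \ref{lemma:one-commutator}, which (up to an overall sign) expresses
\[
(P\phi)^k P - P\phi^k P \;=\; \pm \sum_{l=1}^{k-1}(P\phi)^{k-l}[\phi^l,P]P,
\]
and then to exhibit each summand as a product containing \emph{two} commutator factors from $\mathcal L^{2p}$, so that H\"older's inequality ($\tfrac{1}{2p}+\tfrac{1}{2p}=\tfrac{1}{p}$) places the whole expression in $\mathcal L^p$ rather than only $\mathcal L^{2p}$.

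Set $Q = I - P$, so that $Q^2 = Q$ and $PQ = QP = 0$. A direct calculation gives three useful identities:
\[
[\phi^l,P]P \;=\; Q\phi^l P, \qquad P\phi Q \;=\; -P[\phi,P], \qquad Q\phi^l P \;=\; Q[\phi^l,P].
\]
Peeling off the last factor of $P\phi$ from $(P\phi)^{k-l}$ and inserting $Q = Q\cdot Q$, each summand rewrites as
\[
(P\phi)^{k-l}[\phi^l,P]P \;=\; (P\phi)^{k-l-1}(P\phi Q)(Q\phi^l P) \;=\; -(P\phi)^{k-l-1}\,P[\phi,P]\,Q[\phi^l,P].
\]
This now factors as two elements of $\mathcal L^{2p}$, namely $[\phi,P]$ and $[\phi^l,P]$, sandwiched between operators of norm at most $\|\phi\|^{k-l-1}$ and $1$.

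Applying H\"older's inequality together with the bound $\|[\phi^l,P]\|_{2p} \leq l\|\phi\|^{l-1}\|[\phi,P]\|_{2p}$ already established in the proof of Lemma \ref{lemma:one-commutator}, we estimate
\[
\|(P\phi)^{k-l}[\phi^l,P]P\|_p \;\leq\; \|\phi\|^{k-l-1}\cdot\|[\phi,P]\|_{2p}\cdot l\|\phi\|^{l-1}\|[\phi,P]\|_{2p} \;=\; l\,\|\phi\|^{k-2}\|[\phi,P]\|_{2p}^{2}
\]
for each $l$. Summing over $l=1,\dots,k-1$ and using $\sum_{l=1}^{k-1}l = k(k-1)/2$ yields the claimed inequality.

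The main obstacle is the algebraic manoeuvre in the middle step: recognising that by inserting the idempotent relation $Q = Q^2$ between the two halves of each summand, the single $\mathcal L^{2p}$-commutator $[\phi^l,P]$ can be paired with a second $\mathcal L^{2p}$-commutator $[\phi,P]$ arising from $P\phi Q = -P[\phi,P]$. Once this decomposition is spotted, the remainder is a routine application of H\"older's inequality together with the norm estimate already proved for Lemma \ref{lemma:one-commutator}.
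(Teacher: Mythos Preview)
Your proof is correct and follows essentially the same approach as the paper. The paper writes the telescoping identity directly in the two-commutator form
\[
(P\phi)^k P - P\phi^k P \;=\; \sum_{l=1}^{k-1} P[P,\phi^l][P,\phi](P\phi)^{k-l-1}P,
\]
justified via $P[P,\psi][P,\chi]P = P\psi(P-I)\chi P$, whereas you start from the one-commutator telescoping of Lemma~\ref{lemma:one-commutator} and then factor each summand using $P\phi Q = -P[\phi,P]$ and $Q\phi^l P = Q[\phi^l,P]$; these are the same algebraic manoeuvre (the key identity $P\psi Q\chi P$ as a product of two $\mathcal L^{2p}$-commutators) viewed from opposite ends of the telescope, and the ensuing H\"older estimate and summation are identical.
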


\begin{proof}
First one verifies that
\[
(P \phi)^k P - P \phi^k P = \sum_{l=1}^{k-1} P [P, \phi^l] [P,\phi] (P \phi)^{k-l-1} P
\]
using the identity
$P [P, \psi] [P, \chi] P = P \psi (P-I) \chi P$.  
Each term of the sum contains a product of commutators, so it is in $\mathcal L^p$, and by \eqref{eq:commutator-powers},
\[
\| [P, \phi^l] \|_{2p}
\leq l \| \phi \|^{l-1} \| [\phi,P] \|_{2p}
\]
Hence
\[
\| (P \phi)^k P - P \phi^k P \|_p
\leq \sum_{l=1}^{k-1} l  \| \phi \|^{k-2} \| [\phi,P] \|_{2p}^2
\]
and the result follows.
\end{proof}

\begin{proposition} \label{prop:L1-norm}
For any $\phi\in A$ and any entire function $f$, $T_{f(\phi)} - f(T_\phi) \in \mathcal L^p$ with
\[
\| T_{f(\phi)} - f(T_\phi) \|_p \leq \frac{1}{2} \| [\phi,P] \|_{2p}^2 \tilde f''( \| \phi \|)
\] 
\end{proposition}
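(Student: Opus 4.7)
The plan is to mirror the proof of Proposition \ref{prop:L2-norm}, but to replace the input estimate from Lemma \ref{lemma:one-commutator} by the sharper one from Lemma \ref{lemma:two-commutators}. The upgraded bound on each summand is quadratic in $\|[\phi,P]\|_{2p}$ rather than linear, which by H\"older's inequality on Schatten norms shifts the conclusion from $\mathcal L^{2p}$ to $\mathcal L^p$.

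Concretely, I would write $f(z) = \sum_{k=0}^{\infty} c_k z^k$ and expand
\[
T_{f(\phi)} - f(T_\phi) = \sum_{k=0}^{\infty} c_k \bigl( P\phi^k P - (P\phi)^k P \bigr).
\]
The $k=0$ and $k=1$ terms vanish identically, so the sum effectively starts at $k=2$. For each $k\geq 2$, Lemma \ref{lemma:two-commutators} gives
\[
\| P\phi^k P - (P\phi)^k P \|_p \leq \tfrac{k(k-1)}{2}\,\|\phi\|^{k-2}\,\|[\phi,P]\|_{2p}^2,
\]
so termwise application of the triangle inequality in $\mathcal L^p$ and the identification $\sum_{k\geq 2} k(k-1)|c_k|\|\phi\|^{k-2} = \tilde f''(\|\phi\|)$ yields the claimed bound.

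The only thing to verify is that the series converges in $\mathcal L^p$-norm, which is immediate because $\tilde f$ is entire (it has the same radius of convergence as $f$), so $\tilde f''(\|\phi\|)<\infty$. This also shows that the partial sums form a Cauchy sequence in $\mathcal L^p$, hence $T_{f(\phi)}-f(T_\phi)$ belongs to $\mathcal L^p$ (rather than merely to $\mathcal L^{2p}$ as guaranteed by Proposition \ref{prop:L2-norm}). I do not see a genuine obstacle here; the key conceptual step was already carried out in Lemma \ref{lemma:two-commutators}, where the product of two commutators is extracted from $(P\phi)^kP - P\phi^k P$ via the identity $P[P,\psi][P,\chi]P = P\psi(P-I)\chi P$, thereby gaining one factor of $\|[\phi,P]\|_{2p}$ at the cost of one factor of $\|\phi\|$.
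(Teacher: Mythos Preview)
Your proof is correct and follows essentially the same argument as the paper: expand $T_{f(\phi)} - f(T_\phi)$ as a power series, note the $k=0,1$ terms vanish, apply Lemma \ref{lemma:two-commutators} termwise, and sum to obtain $\tfrac{1}{2}\|[\phi,P]\|_{2p}^2\,\tilde f''(\|\phi\|)$. Your additional remarks on $\mathcal L^p$-convergence of the partial sums are a welcome clarification but do not alter the strategy.
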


\begin{proof}
Write $f(z) = \sum c_k z^k$.
The first two terms in the expansion
\begin{align*}
T_{f(\phi)} - f(T_\phi)
&= \sum_{k=0}^{\infty} c_k \left( P \phi^k P - (P\phi)^k P \right)
\end{align*}
vanish, and by Lemma \ref{lemma:two-commutators} we estimate
\begin{align*}
\| T_{f(\phi)} - f(T_\phi) \|_p
&\leq \sum_{k=2}^{\infty} |c_k| \frac{k(k-1)}{2} \| [\phi,P] \|_{2p}^2 \| \phi \|^{k-2} \\
&= \frac{1}{2} \| [\phi,P] \|_{2p}^2 \sum_{k=2}^{\infty} k(k-1)  |c_k| \| \phi \|^{k-2} \\
&= \frac{1}{2} \| [\phi,P] \|_{2p}^2 \tilde f''( \| \phi \| ) \qedhere
\end{align*}
\end{proof}


We will need a sharper estimate for the exponential function:

\begin{proposition} \label{prop:difference-exponentials-trace-class}
If $\phi$ is self-adjoint, then $e^{T_{i t \phi}} - T_{e^{i t \phi}} \in \mathcal L^p$ for any $t \in \mathbf R$, and
\[
\| e^{T_{i t \phi}} - T_{e^{i t \phi}} \|_p \leq (|t|+1)^2 (c_1 + c_2)
\]
where
\[
c_1 = \max_{0 \leq t \leq 1} \| e^{T_{i t \phi}} - T_{e^{i t\phi}} \|_p
\quad \text{and} \quad
c_2 = \max_{0 \leq t \leq 1} \| [P,e^{it \phi}] \|_{2p}^2
\]
\end{proposition}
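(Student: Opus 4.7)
The plan is to imitate the scaling argument of the preceding proposition on $\|e^{itA}-e^{itA'}\|_p$, tracking two independent errors: the deviation of $X_t := e^{itT_\phi} = e^{T_{it\phi}}$ from $Y_t := T_{e^{it\phi}}$, and the failure of $Y_t$ to be an exact semigroup. Since $\phi$ is self-adjoint, $T_\phi$ is too, so $X_t$ is unitary on $PH$ with $\|X_t\|\leq 1$; also $\|Y_t\|\leq \|e^{it\phi}\|=1$. Replacing $\tau$ by $-\tau$ if needed (using $X_{-\tau}=X_\tau^*$ and $Y_{-\tau}=Y_\tau^*$), I may assume $\tau\geq 0$.

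First, a direct calculation gives the ``semigroup defect''
\[
Y_{t+s} - Y_tY_s \;=\; Pe^{it\phi}(I-P)e^{is\phi}P \;=\; -[P,e^{it\phi}](I-P)[P,e^{is\phi}],
\]
so by H\"older $\|Y_{t+s}-Y_tY_s\|_p \leq \|[P,e^{it\phi}]\|_{2p}\|[P,e^{is\phi}]\|_{2p}$. Next, expanding $e^{ikt\phi}=(e^{it\phi})^k$ and applying \eqref{eq:commutator-powers} (with $e^{it\phi}$ unitary) yields the elementary bound
\[
\|[P,e^{ikt\phi}]\|_{2p} \,\leq\, k\,\|[P,e^{it\phi}]\|_{2p}.
\]

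Fix $|t|\leq 1$ and integer $n\geq 1$. Telescoping the defect produces
\[
Y_{nt} - Y_t^{\,n} \;=\; -\sum_{k=1}^{n-1} Y_t^{\,n-k-1}\,[P,e^{it\phi}]\,(I-P)\,[P,e^{ikt\phi}],
\]
and the preceding bounds give $\|Y_{nt}-Y_t^{\,n}\|_p \leq \sum_{k=1}^{n-1} k\,c_2 = \tfrac{n(n-1)}{2}c_2$. Separately, identity \eqref{eq:difference-of-powers} applied with $r=X_t$ and $s=Y_t$ yields $\|X_t^{\,n} - Y_t^{\,n}\|_p \leq n\|X_t-Y_t\|_p \leq nc_1$. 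Since $X_t^{\,n} = X_{nt}$ by the semigroup property, the triangle inequality gives
\[
\|X_{nt} - Y_{nt}\|_p \,\leq\, nc_1 + \tfrac{n(n-1)}{2}\,c_2.
\]

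To conclude, given $\tau\geq 0$ I would set $n := \lceil\tau\rceil$ and $t := \tau/n$, so $|t|\leq 1$ and $n\leq \tau+1$. The previous inequality then becomes $\|X_\tau - Y_\tau\|_p \leq (\tau+1)c_1 + \tfrac{(\tau+1)^2}{2}c_2 \leq (|\tau|+1)^2(c_1+c_2)$, which is the stated estimate. The main obstacle is spotting the two-commutator defect formula for $Y_{t+s}-Y_tY_s$ that forces the error into $\mathcal L^p$ rather than merely $\mathcal L^{2p}$; once that identity and the linear growth bound on $\|[P,e^{ikt\phi}]\|_{2p}$ are in hand, the remainder is a routine combination of telescoping and the scaling trick from the preceding proposition.
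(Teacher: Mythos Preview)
Your proposal is correct and follows essentially the same route as the paper: the paper also splits $e^{T_{int\phi}}-T_{e^{int\phi}}$ into $(X_t^n-Y_t^n)+(Y_t^n-Y_{nt})$, bounds the first piece by $n\,\|X_t-Y_t\|_p$ via identity \eqref{eq:difference-of-powers}, bounds the second by $\tfrac{n(n-1)}{2}\|[P,e^{it\phi}]\|_{2p}^2$ (there by invoking Lemma \ref{lemma:two-commutators}, which is exactly your telescoped two-commutator defect), and then scales. The only cosmetic difference is that you write out the semigroup-defect identity $Y_{t+s}-Y_tY_s=-[P,e^{it\phi}](I-P)[P,e^{is\phi}]$ explicitly rather than citing the lemma.
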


\begin{proof}
Setting $r = e^{T_{i t \phi}}$ and $s = T_{e^{i t\phi}}$ in identity \eqref{eq:difference-of-powers}, 
we obtain
\[
\| e^{T_{i n t \phi}} - (T_{e^{i t\phi}})^n \|_p \leq \| e^{T_{i t \phi}} - T_{e^{i t\phi}} \|_p \sum_{k=1}^{n} \| T_{e^{i t\phi}} \|^{k-1} \| e^{T_{i t \phi}} \|^{n-k} 
\]
Since $\| e^{T_{i t \phi}} \| = 1$ and $\| T_{e^{i t\phi}} \| \leq 1$, we find
\begin{equation} \label{eq:Connes1}
\| e^{T_{i n t \phi}} - (T_{e^{i t\phi}})^n \|_p \leq n \| e^{T_{i t \phi}} - T_{e^{i t\phi}} \|_p
\end{equation}
By Lemma \ref{lemma:two-commutators},
\[
\| (T_f)^n - T_{f^n} \|_p \leq \frac{n(n-1)}{2} \| [P,f] \|_{2p}^2 \| f \|^{n-2}
\]
Setting $f = e^{it\phi}$, we have  $\| f \| = 1$, so
\begin{equation} \label{eq:Connes2}
\| (T_{e^{it\phi}})^n - T_{e^{int \phi}} \|_p \leq \frac{n(n-1)}{2} \| [P,e^{it \phi}] \|_{2p}^2
\end{equation}
Combining \eqref{eq:Connes1} and \eqref{eq:Connes2}, we find
\[
\| e^{T_{i n t \phi}} - T_{e^{int \phi}} \|_p \leq n^2 ( \| e^{T_{i t \phi}} - T_{e^{i t\phi}} \|_p + \| [P,e^{it \phi}] \|_{2p}^2 )
\]
and the result follows by scaling.
\end{proof}

We are now able to obtain an analogue of Proposition \ref{prop:Lp-perturbation-holomorphic} for summable Fredholm modules.  Below we regard $\phi$ as an operator on $H$ and $T_\phi$ as an operator on $PH$, and we view $\sigma(\phi)$, $\sigma(T_\phi)$, $f(\phi)$, and $f(T_\phi)$ accordingly.

\begin{theorem} \label{thm:holomorphic-functional-calculus-commutes-module}
If either
\begin{enumerate}
\item $f$ is holomorphic on a neighborhood of $\sigma(\phi) \cup \sigma(T_\phi)$ and there is a contour $\Gamma$ that defines both $f(\phi)$ and $f(T_\phi)$, or
\item $\phi$ is self-adjoint and $f$ is $C^\infty$ on $\sigma(\phi) \cup \sigma(T_\phi)$,
\end{enumerate}
then $f(T_\phi) - T_{f(\phi)} \in \mathcal L^p$.
\end{theorem}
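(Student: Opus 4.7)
The plan is to handle the two cases in parallel with the arguments for Propositions \ref{prop:Lp-perturbation-holomorphic} and \ref{prop:functional-calculus-0-coset}, using contour integration for (1) and Fourier inversion (reducing to the exponential bound of Proposition \ref{prop:difference-exponentials-trace-class}) for (2).

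For case (1), I would begin from
\[
T_{f(\phi)} - f(T_\phi) = \frac{1}{2\pi i} \int_\Gamma f(\lambda) \left[ P(\lambda-\phi)^{-1}P - (\lambda-T_\phi)^{-1} \right] d\lambda
\]
so that the problem reduces to showing the integrand is continuous in $\mathcal L^p$-norm on $\Gamma$. Writing $R = (\lambda-\phi)^{-1}$ and $S = (\lambda-T_\phi)^{-1}$, a direct computation on $PH$ starting from $(\lambda-T_\phi)\cdot PRP = P + P[P,\phi]RP$ yields the single-commutator identity
\[
P(\lambda-\phi)^{-1}P - (\lambda-T_\phi)^{-1} = SP[P,\phi]RP
\]
Naively this places the integrand only in $\mathcal L^{2p}$. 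To reach $\mathcal L^p$ I would exploit the trivial identity $P[P,\phi]P = 0$, which forces $P[P,\phi]RP = P[P,\phi](I-P)RP = -P[P,\phi][P,R]P$. Substituting the standard identity $[P,R] = R[P,\phi]R$ then gives
\[
P(\lambda-\phi)^{-1}P - (\lambda-T_\phi)^{-1} = -SP[P,\phi]\,R[P,\phi]RP,
\]
a product with two factors in $\mathcal L^{2p}$, hence in $\mathcal L^p$ by H\"older with bound $\|S\|\,\|R\|^2\,\|[P,\phi]\|_{2p}^{2}$. Continuity of the integrand on $\Gamma$ follows from norm-continuity of the resolvents, and integration yields case (1).

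For case (2), I would follow the strategy of Proposition \ref{prop:Lp-perturbation-holomorphic}(2). Since $\phi = \phi^*$, both $\phi$ and $T_\phi = P\phi P$ are self-adjoint with real compact spectra. Extend $f$ to a compactly supported $C^\infty$ function on $\mathbf R$ (the values off $\sigma(\phi) \cup \sigma(T_\phi)$ are immaterial for the functional calculus) and write $f = \hat g$ for a Schwartz function $g$. Then
\[
T_{f(\phi)} - f(T_\phi) = \frac{1}{\sqrt{2\pi}} \int \left( T_{e^{-it\phi}} - e^{-itT_\phi} \right) g(t)\,dt.
\]
By Proposition \ref{prop:difference-exponentials-trace-class} the integrand lies in $\mathcal L^p$ with norm at most $(|t|+1)^2(c_1+c_2)$, and the rapid decay of $g$ forces absolute convergence of the integral in $\mathcal L^p$.

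The main obstacle is extracting the $\mathcal L^p$ (rather than just $\mathcal L^{2p}$) bound in case (1): the resolvent identity naturally produces only one commutator $[P,\phi]$, which is insufficient. The trick is that $P[P,\phi]P = 0$ lets one generate a second commutator essentially for free; this is the resolvent-level incarnation of the identity $P[P,\psi][P,\chi]P = P\psi(P-I)\chi P$ that drives the passage from Lemma \ref{lemma:one-commutator} to Lemma \ref{lemma:two-commutators}.
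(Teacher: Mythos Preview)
Your proposal is correct and follows essentially the same route as the paper. For case (1) you derive the two-commutator resolvent identity $PRP - S = -SP[P,\phi]R[P,\phi]RP$, while the paper records the equivalent variant $S - PRP = P[R,P][\phi,P]PS$; both exploit $P[P,\phi]P = 0$ to upgrade the naive single-commutator identity to one with two $\mathcal L^{2p}$ factors, and case (2) is handled identically via Fourier inversion and Proposition~\ref{prop:difference-exponentials-trace-class}.
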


\begin{proof} \leavevmode
\begin{enumerate}
\item Notice that $(\lambda - P \phi P)^{-1} - P (\lambda - \phi)^{-1} P$ can be written as 
\[
P[(\lambda - \phi)^{-1}, P] [\phi, P] P (\lambda - P\phi P)^{-1}
\]
Since $[P,\phi] \in \mathcal L^{2p}$, the assignment
\[
\lambda \mapsto (\lambda - P \phi P)^{-1} - P (\lambda - \phi)^{-1} P
\]
is a continuous map into $\mathcal L^p$.  Hence
\[
f(T_\phi) - T_{f(\phi)} = \frac{1}{2\pi i} \int_\Gamma \left( (\lambda - P \phi P)^{-1} - P (\lambda - \phi)^{-1} P \right) f(\lambda) \, d\lambda
\]
converges in $\mathcal L^p$.

\item As in Proposition \ref{prop:functional-calculus-0-coset}, we may assume that $f$ has compact support, so that $f=\hat g$ for a Schwartz class function $g$. Then
\[
f(T_\phi) - T_{f(\phi)} = \frac{1}{\sqrt{2\pi}} \int \left( e^{T_{-i t \phi}} - T_{e^{-i t \phi}} \right) g(t) \, dt
\]
and the result follows by Proposition \ref{prop:difference-exponentials-trace-class}. \qedhere
\end{enumerate}
\end{proof}

\section{Toeplitz operators and tame symbols}  \label{sec:Toeplitz-operators}

In this section, we apply our techniques to Toeplitz operators and obtain formulas for joint torsion in terms of Tate tame symbols.  
Let $P: L^2(S^1) \to H^2(S^1)$ be the orthogonal projection onto the Hardy space $H^2(S^1)$.  Any function $\phi \in L^\infty(S^1)$ defines a bounded operator on $L^2(S^1)$ by multiplication by $\phi$.  
Let us begin by recalling results on commutators of Toeplitz operators.

\begin{lemma} \label{lemma:2-norm-commutator}
If $\phi \in L^\infty(S^1)$ is in the Sobolev space  $W^{\frac{1}{2}, 2}(S^1) = H^{\frac{1}{2}}(S^1)$, then 
\[
(I-P) \phi P, P \phi (I-P), [\phi,P] \in \mathcal L^2(L^2(S^1))
\]
with
\[
\| [\phi,P] \|_2 \leq \| \phi \|_{W^{\frac{1}{2}, 2}(S^1)}
\]
\end{lemma}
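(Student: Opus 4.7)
The plan is to compute the Hilbert--Schmidt norms directly in the Fourier basis $\{e^{in\theta}\}_{n\in\mathbf{Z}}$ of $L^2(S^1)$, which is well adapted to the projection $P$ since $P$ is simply the indicator of $\{n\ge 0\}$ in this basis. Expanding $\phi = \sum_{k\in\mathbf{Z}} \hat\phi(k)\,e^{ik\theta}$, the matrix of multiplication by $\phi$ has entries $\langle \phi\, e^{im\theta}, e^{in\theta}\rangle = \hat\phi(n-m)$, so the operator $(I-P)\phi P$ is represented by $\hat\phi(n-m)$ on the quadrant $n<0,\ m\ge 0$ and vanishes elsewhere.

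The key combinatorial step is then to fix a Fourier mode $k\le -1$ and count lattice points $(n,m)$ in this quadrant with $n-m=k$: there are exactly $|k|$ of them, corresponding to $m=0,1,\dots,|k|-1$. Summing over $k$ gives
\[
\|(I-P)\phi P\|_2^2 = \sum_{k\le -1} |k|\,|\hat\phi(k)|^2,
\]
and an identical argument yields $\|P\phi(I-P)\|_2^2 = \sum_{k\ge 1} k\,|\hat\phi(k)|^2$. Both sums are finite once $\phi\in H^{1/2}(S^1)$, which gives the first claim.

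To assemble the commutator, I would write $[\phi,P] = \phi P - P\phi = (I-P)\phi P - P\phi(I-P)$ and observe that the two summands have mutually orthogonal source spaces ($PH$ vs $(I-P)H$) and mutually orthogonal range spaces; hence they are Hilbert--Schmidt orthogonal, and
\[
\|[\phi,P]\|_2^2 = \sum_{k\ne 0} |k|\,|\hat\phi(k)|^2 \le \sum_{k\in\mathbf{Z}} (1+|k|)\,|\hat\phi(k)|^2 = \|\phi\|_{W^{1/2,2}(S^1)}^2,
\]
using the Fourier-side definition of the $H^{1/2}$ Sobolev norm.

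I do not anticipate a serious obstacle. The only thing demanding care is the normalization of $\|\cdot\|_{W^{1/2,2}(S^1)}$: to obtain the stated constant $1$ in the inequality, the appropriate convention is $\|\phi\|_{H^{1/2}}^2 = \sum_k (1+|k|)\,|\hat\phi(k)|^2$ (the equivalent $(1+|k|^2)^{1/2}$ weight would yield an absolute constant rather than $1$). All the analytic content is packaged in the counting argument that produces the weight $|k|$ on the $k$th Fourier mode.
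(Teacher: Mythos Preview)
Your proposal is correct and follows essentially the same approach as the paper's proof: both expand $\phi$ in the Fourier basis, compute the Hilbert--Schmidt norms of $(I-P)\phi P$ and $P\phi(I-P)$ as weighted sums $\sum |k|\,|c_k|^2$ over negative (respectively positive) modes, and then add them to obtain $\|[\phi,P]\|_2^2 = \sum_{k\ne 0}|k|\,|c_k|^2 \le \|\phi\|_{W^{1/2,2}}^2$. Your version is slightly more detailed (the lattice-point count, the orthogonality justification for adding the squared norms, and the remark on the Sobolev norm convention), but the argument is the same.
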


\begin{proof}
Write $\phi = \sum c_n e^{in\theta}$.
A straightforward calculation shows that 
$(I-P)\phi P \in \mathcal L^2$, with
\[
\| (I-P)\phi P \|_2^2 = \sum_{n > 0} n |c_n|^2,
\]
By taking adjoints, $P\phi(I-P) \in \mathcal L^2$ as well, with
\[
\| P \phi (I-P) \|_2^2 = - \sum_{n < 0} n |c_n|^2
\]
Hence
$[\phi,P] = (I-P)\phi P - P \phi (I-P) \in \mathcal L^2$, and
\[
\| [\phi,P] \|_2^2 = \sum_{n \neq 0} |n| |c_n|^2 \qedhere
\]
\end{proof}

In this case, Toeplitz operators have trace class commutators, and the Berger-Shaw formula calculates this trace:

\begin{theorem} \label{thm:Berger-Shaw}
If $f,g \in L^\infty(S^1) \cap W^{\frac{1}{2}, 2}(S^1)$, then $[T_f, T_g] \in \mathcal L^1$. If $f,g \in C^1(S^1)$, then
\[
\tr [T_f, T_g] = \frac{1}{2 \pi i} \int f \, dg
\]
\end{theorem}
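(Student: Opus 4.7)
The plan is to prove (1) via a Hilbert--Schmidt factorization of $T_f T_g - T_{fg}$, and then derive the trace formula in (2) by verifying it on trigonometric monomials and extending by continuity.

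For (1), I write $M_\phi$ for multiplication by $\phi \in L^\infty(S^1)$ on $L^2(S^1)$ and insert $I = P + (I-P)$ into the middle of $T_{fg} = P M_f M_g P$ to obtain
\[
T_f T_g - T_{fg} \;=\; -\,P M_f (I-P) M_g P \;=\; -\,\bigl[P M_f (I-P)\bigr]\bigl[(I-P) M_g P\bigr].
\]
Lemma \ref{lemma:2-norm-commutator} places both bracketed factors in $\mathcal L^2$, with norms controlled by the $W^{1/2,2}$-norms of $f$ and $g$, so their product lies in $\mathcal L^1$ via $\mathcal L^2 \cdot \mathcal L^2 \subseteq \mathcal L^1$. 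Subtracting the analogous identity with $f$ and $g$ swapped yields $[T_f, T_g] \in \mathcal L^1$, together with a quantitative bound of the form $\|[T_f, T_g]\|_1 \leq 2\|f\|_{W^{1/2,2}}\|g\|_{W^{1/2,2}}$.

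For (2), I would argue by density. The estimate above makes $(f, g) \mapsto \tr[T_f, T_g]$ bilinear and continuous on $W^{1/2,2}(S^1) \times W^{1/2,2}(S^1)$. Since $C^1(S^1) \hookrightarrow W^{1/2,2}(S^1)$ continuously (indeed $\sum_{n \neq 0}|n||c_n|^2 \leq \|f\|_{L^2}\|f'\|_{L^2}$ by Cauchy--Schwarz on the Fourier series), this continuity transfers to $C^1(S^1) \times C^1(S^1)$, where the right-hand side $\frac{1}{2\pi i}\int f\,dg$ is manifestly continuous as well. Trigonometric polynomials are dense in $C^1(S^1)$ (e.g.\ by taking Ces\`aro means of $f$ and $f'$ simultaneously), so it suffices to verify the formula for $f = e^{im\theta}$ and $g = e^{in\theta}$. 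A direct matrix computation in the orthonormal basis $\{e^{ik\theta}\}_{k \geq 0}$ of $H^2(S^1)$---using that $T_{e^{im\theta}}$ maps $e_k$ to $e_{k+m}$ when $k+m \geq 0$ and to $0$ otherwise---shows that the diagonal of $[T_{e^{im\theta}}, T_{e^{in\theta}}]$ vanishes unless $m+n=0$, in which case it contributes $-1$ for each $k$ in a range of length $|m|$, giving trace $-m\,\delta_{m+n,0}$. On the other hand $\frac{1}{2\pi i}\int e^{im\theta}\,d(e^{in\theta}) = n\,\delta_{m+n,0}$, which equals $-m$ precisely when $m+n=0$, matching the left-hand side.

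The main difficulty is organizational rather than technical: one must choose a topology (here $C^1$) in which both sides are simultaneously continuous and trigonometric polynomials are dense. With that in hand, the Hilbert--Schmidt factorization, the monomial computation, and bilinear extension combine to give the formula without further complication.
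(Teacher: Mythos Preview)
Your argument is correct and follows essentially the same route as the paper: the identity $[T_f,T_g]=Pg(I-P)fP-Pf(I-P)gP$ together with Lemma~\ref{lemma:2-norm-commutator} gives the $\mathcal L^1$ statement, and the trace formula is then obtained by computing in the Fourier basis $\{e^{in\theta}\}$. Your write-up is more explicit than the paper's in spelling out the continuity/density step that passes from trigonometric monomials to $C^1$ functions, but the underlying strategy is identical.
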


\begin{proof}
First notice that
$[T_f, T_g] = Pg(I-P)fP - Pf(I-P)gP$.
Both terms are trace class since they are products of two operators which are Hilbert-Schmidt by the preceding lemma.
The trace formula then follows by writing $f$ and $g$ in the basis $\{ e^{in\theta} \}$.
\end{proof}

\subsection{$H^\infty$ symbols}

\begin{proposition} \label{prop:torsion-invertible}
Suppose $\phi \in C(S^1) \cap H^{\infty}(S^1)$ is invertible in $H^{\infty}(S^1)$.
\begin{enumerate}
\item If $|\lambda|>1$, then $\tau(T_\phi, T_z - \lambda) = 1$.
\item If $|\lambda|<1$, then $\tau(T_\phi, T_z - \lambda) = \phi(\lambda)$,
with $\phi$ extended holomorphically to the interior of the unit disk.
\end{enumerate}
\end{proposition}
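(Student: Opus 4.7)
The plan is to apply the multiplicative Lefschetz formula (Lemma \ref{lemma:multiplicative-Lefschetz}) directly. Since $\phi$ and $z-\lambda$ both lie in $H^{\infty}(S^1)$, the operators $T_\phi$ and $T_{z-\lambda}$ are analytic Toeplitz operators, hence they commute: $T_\phi T_{z-\lambda} = T_{\phi(z-\lambda)} = T_{z-\lambda} T_\phi$. In particular $[T_\phi, T_z - \lambda] = 0 \in \mathcal{L}^1$, so the joint torsion is defined. The hypothesis that $\phi$ is invertible in $H^\infty(S^1)$ is used to conclude that $T_\phi$ is invertible in $\mathcal B(H^2(S^1))$, with two-sided inverse $T_{1/\phi}$.

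For part (1), when $|\lambda|>1$, the function $z-\lambda$ is non-vanishing on the closed unit disk, hence invertible in $H^\infty(S^1)$, and $T_{z-\lambda}$ is invertible. Since the Koszul complex of $(T_\phi, T_{z-\lambda})$ is the tensor product of the Koszul complexes of each factor and either factor is acyclic, the joint Koszul homology vanishes. All four spaces appearing in Lemma \ref{lemma:multiplicative-Lefschetz} are zero, so every determinant is an empty product and $\tau(T_\phi, T_z - \lambda) = 1$.

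For part (2), when $|\lambda|<1$, the operator $T_{z-\lambda}$ is injective (as multiplication by a nonzero function), while $\coker T_{z-\lambda} = H^2(S^1)/(z-\lambda) H^2(S^1) \cong \mathbf{C}$, represented by the class of the constant function $1$. Since $T_\phi$ is still invertible, the joint Koszul homology again vanishes, and of the four determinants in Lemma \ref{lemma:multiplicative-Lefschetz} only $\det T_\phi|_{\coker T_{z-\lambda}}$ is non-trivial. The key computation is that $T_\phi$ acts on this 1-dimensional quotient as multiplication by $\phi(\lambda)$: because $\phi \in H^\infty(S^1)$ extends holomorphically to the open unit disk, the difference $\phi(z) - \phi(\lambda)$ is divisible by $z - \lambda$ in $H^\infty$, and hence in $H^2$, so $T_\phi(1) = \phi \equiv \phi(\lambda) \cdot 1 \pmod{(z-\lambda) H^2}$. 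Thus $\tau(T_\phi, T_z - \lambda) = \phi(\lambda)$.

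The only step requiring any care is the identification of $T_\phi$ acting on $\coker T_{z-\lambda}$ with evaluation at $\lambda$; this reduces to the elementary divisibility fact above. The vanishing of joint Koszul homology in the presence of an invertible factor is standard (from the tensor-product description of the Koszul complex for commuting operators), so neither part requires any further perturbation-theoretic machinery.
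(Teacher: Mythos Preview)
Your proof is correct and slightly more direct than the paper's. Both arguments invoke Lemma~\ref{lemma:multiplicative-Lefschetz}, but for part~(2) the paper first passes to adjoints via Lemma~\ref{lemma:torsion-Steinberg-etc}(6), reducing to the computation of $T_{\bar\phi}$ on $\ker(T_{\bar z}-\bar\lambda)$, which is spanned by the reproducing kernel $k_\lambda(z)=\sum_k(\bar\lambda z)^k$; the eigenvalue $\overline{\phi(\lambda)}$ then drops out of the reproducing property. You instead stay with the original pair and compute $T_\phi$ on $\coker(T_z-\lambda)=H^2/(z-\lambda)H^2$ via the divisibility $\phi-\phi(\lambda)\in(z-\lambda)H^\infty$. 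These are dual descriptions of the same one-dimensional space, so the computations are equivalent; your route simply avoids the adjoint detour, while the paper's route has the advantage of working with an explicit eigenvector in $H^2$ rather than a quotient. One small quibble: the phrase ``tensor product of the Koszul complexes'' is not literally correct over $\mathbf C$ (that would live on $H\otimes H$); what you want is that $K_\bullet(T_\phi,T_{z-\lambda})$ is the mapping cone of the invertible chain map $T_\phi$ on $K_\bullet(T_{z-\lambda})$, which is indeed acyclic. The conclusion is unaffected.
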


\begin{proof}
First notice that $T_{\phi}$ is invertible with inverse $T_{1/\phi}$.
If $|\lambda|>1$, then $z-\lambda$ is invertible in $H^{\infty}(S^1)$ as well.
The operators $T_\phi$ and $T_z-\lambda$ commute, so in this case
$\tau(T_\phi, T_z - \lambda) = 1$.

Now suppose $| \lambda| < 1$.
By Lemma \ref{lemma:torsion-Steinberg-etc}(6), it is enough to show that
$\tau(T_{\bar z} - \bar \lambda, T_{\bar \phi}) = \overline{\phi(\lambda)}$.
In this case, $\coker (T_{\bar z} - \bar \lambda) = \{ 0 \}$ and
\[
\ker (T_{\bar z} - \bar \lambda) = \text{span} \left( \frac{1}{1 - \bar \lambda z} = \sum_{k = 0}^\infty (\bar \lambda z)^k \right)
\]
The operator $T_{\bar \phi}$ acts as multiplication by $\overline{\phi(\lambda)}$ on the one dimensional subspace $\ker (T_{\bar z} - \bar \lambda)$.  In particular,
\[
\det T_{\bar \phi} |_{\ker (T_{\bar z} - \bar \lambda)} = \overline{\phi(\lambda)}
\]
This is the joint torsion by Lemma \ref{lemma:multiplicative-Lefschetz} since $T_{\bar \phi}$ is invertible and commutes with $T_{\bar z} - \bar \lambda$.
\end{proof}

\begin{proposition} \label{prop:single-tame-symbol}
Let $\lambda, \mu \in \mathbf C$.
\begin{enumerate}
\item If $| \lambda_1 | > 1$ and $| \lambda_2 | > 1$, then 
$\tau(T_z-\lambda_1, T_z-\lambda_2) = 1$.

\item If $| \lambda_1 | < 1$ and $| \lambda_2 | > 1$, then 
$\tau(T_z-\lambda_1, T_z-\lambda_2) = (\lambda_1 - \lambda_2)^{-1}$.

\item If $| \lambda_1 | > 1$ and $| \lambda_2 | < 1$, then 
$\tau(T_z-\lambda_1, T_z-\lambda_2) = \lambda_2 - \lambda_1$.

\item If $| \lambda_1 | < 1$ and $| \lambda_2 | < 1$, then 
$\tau(T_z-\lambda_1, T_z-\lambda_2) = -1$.
\end{enumerate}
\end{proposition}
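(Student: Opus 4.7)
The plan is to handle the four cases separately, using Proposition \ref{prop:torsion-invertible} for cases (1)--(3), where at least one operator is invertible, and a direct algebraic computation for case (4). In all four cases the two operators commute, so the commutator lies (trivially) in $\mathcal L^1$ and every joint torsion number is defined.

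For any $\lambda$ with $|\lambda|>1$, the polynomial $z-\lambda \in C(S^1)\cap H^\infty(S^1)$ has no zeros in the closed unit disk and is therefore invertible in $H^\infty(S^1)$. This makes Proposition \ref{prop:torsion-invertible} directly applicable in cases (1) and (3) with $\phi = z-\lambda_1$: part (1) gives $\tau = 1$ in case (1), and part (2) gives $\tau = (z-\lambda_1)(\lambda_2) = \lambda_2-\lambda_1$ in case (3). Case (2) is then immediate by swapping arguments via Lemma \ref{lemma:torsion-Steinberg-etc}(3):
\[
\tau(T_z-\lambda_1, T_z-\lambda_2) = \tau(T_z-\lambda_2, T_z-\lambda_1)^{-1} = (\lambda_1-\lambda_2)^{-1},
\]
where the middle equality comes from applying Proposition \ref{prop:torsion-invertible}(2) with $\phi = z-\lambda_2$ and $\lambda = \lambda_1$.

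The main obstacle is case (4), where neither operator is invertible and Proposition \ref{prop:torsion-invertible} is not available. The subcase $\lambda_1 = \lambda_2$ is handled by Lemma \ref{lemma:torsion-Steinberg-etc}(8) together with the classical fact $\ind(T_z - \lambda) = -1$ for $|\lambda|<1$. For $\lambda_1 \neq \lambda_2$, I would set $A = T_z-\lambda_1$ and $\mu = \lambda_2-\lambda_1$, and exploit the factorization
\[
T_z-\lambda_2 = -\mu\bigl(I - \mu^{-1} A\bigr).
\]
Multiplicativity in the second argument (Lemma \ref{lemma:torsion-Steinberg-etc}(1)) decomposes $\tau(A, T_z-\lambda_2)$ as $\tau(A,-\mu I)\cdot\tau(A, I - A/\mu)$. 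For the first factor, the scalar-multiplication lemma preceding Corollary \ref{cor:variational-scalar-multiplication} specializes (in the case $B=I$, where $H_0$ and $H_2$ both vanish) to $\tau(A, \lambda I) = \lambda^{\ind A}$, giving $\tau(A,-\mu I) = (-\mu)^{-1}$.

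For the second factor $\tau(A, I - A/\mu)$, I would write $A = (\mu I)(A/\mu)$ and apply multiplicativity in the first slot (which follows from Lemma \ref{lemma:torsion-Steinberg-etc}(3) combined with (1)). The Steinberg relation Lemma \ref{lemma:torsion-Steinberg-etc}(4) kills the factor $\tau(A/\mu, I - A/\mu) = 1$, leaving $\tau(\mu I, I - A/\mu) = \mu^{-\ind(I - A/\mu)}$ by the scalar formula again; and since $I - A/\mu = -\mu^{-1}(T_z-\lambda_2)$ has index $-1$, this equals $\mu$. The product $(-\mu)^{-1}\cdot \mu = -1$ then yields the required value. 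The hard part is spotting the right factorization and then carefully tracking the opposing signs in $(-\mu)^{\ind A}$ versus $\mu^{-\ind(I-A/\mu)}$ to see that the $\mu$-dependence cancels and leaves the universal constant $-1$.
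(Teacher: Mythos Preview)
Your proof is correct. Cases (1)--(3) match the paper's argument essentially verbatim, and your case (4) is sound: the scalar-multiplication lemma with $B=I$ does give $\tau(A,\lambda I)=\lambda^{\ind A}$ (the Koszul complex $K_\bullet(A,I)$ is exact, so $H_0=H_2=0$), the Steinberg relation applies since $A/\mu$ and $I-A/\mu$ are both Fredholm and commute, and the bookkeeping of powers of $\mu$ is right.

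Your treatment of case (4) is genuinely different from the paper's. The paper exploits Toeplitz structure: it multiplies both entries by $T_{\bar z}$, so that $T_{\bar z(z-\lambda_i)}=T_{1-\lambda_i\bar z}$ becomes invertible in $H^\infty$ for $|\lambda_i|<1$; multiplicativity then reduces everything to $\tau(T_{\bar z},T_{\bar z})=(-1)^{\ind T_{\bar z}}=-1$ and cross terms that vanish by Proposition \ref{prop:torsion-invertible}. Your argument, by contrast, is purely $K$-theoretic: the factorization $T_z-\lambda_2=-\mu(I-A/\mu)$ together with the Steinberg relation and the scalar formula never uses that these are Toeplitz operators at all. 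What your approach buys is generality---the same manipulation shows $\tau(A,A-\mu)=(-1)^{\ind A}$ for any Fredholm $A$ with $A-\mu$ Fredholm and $\mu\neq 0$---while the paper's trick stays closer to the tame-symbol picture and makes the later connection to Blaschke factors more transparent. One small point: for the factor $\tau(\mu I, I-A/\mu)$ you silently invoke antisymmetry (Lemma \ref{lemma:torsion-Steinberg-etc}(3)) to get into the form where the scalar lemma applies; it would be worth saying so explicitly.
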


\begin{proof}
In case (1), both $T_z - \lambda_1$ and $T_z - \lambda_2$ are invertible in $H^{\infty}(S^1)$ and commute with each other, so $\tau(T_z-\lambda_1, T_z-\lambda_2) = 1$.

Cases (2) and (3) follow from the preceding proposition.

For (4), we use the multiplicative property of joint torsion:
\begin{align*}
\tau(T_z-\lambda_1, T_z-\lambda_2)
={}& \tau(T_{\bar z}, T_{\bar z}) \cdot
\tau(T_{\bar z}, T_{\bar z (z- \lambda_2)})^{-1} \\
& \cdot \tau(T_{\bar z (z- \lambda_1)}, T_{\bar z})^{-1} \cdot
\tau(T_{\bar z ( z - \lambda_1 ) }, T_{\bar z (z- \lambda_2)})
\end{align*}
The first term is $-1$, and the last term is $1$ since $T_{\bar z ( z - \lambda_1 ) }$ and $T_{\bar z (z- \lambda_2)}$ are invertible and commute with each other.
The middle two terms are both 1 by the above proposition.
Hence, $\tau(T_z-\lambda_1, T_z-\lambda_2) = -1$.
\end{proof}

If $f$ and $g$ are meromorphic at $\lambda \in \mathbf C$, then the quotient
\[
\frac{f^{\text{ord}_\lambda(g)}}{g^{\text{ord}_\lambda(f)}}
\]
is regular at $\lambda$.  Here, $\text{ord}_\lambda$ denotes the order of the zero or pole at $\lambda$.

\begin{definition} \label{def:tame-symbol}
The tame symbol $c_\lambda(f,g)$ of $f$ and $g$ at $\lambda$ is defined as
\[
c_\lambda(f,g) = (-1)^{\text{ord}_\lambda(f) \cdot \text{ord}_\lambda(g)} \frac{f^{\text{ord}_\lambda(g)}}{g^{\text{ord}_\lambda(f)}} (\lambda)
\]
\end{definition}

\begin{definition}
If $a \in \mathbf C$ is nonzero, the Blaschke factor $B_a$ is
\[
B_a(z) = \frac{|a|}{a} \frac{a-z}{1-\bar a z}
\]
Let $B_0(z) = z$ and $B_\infty(z) = \bar z$.  A product of Blaschke factors is known as a Blaschke product.
\end{definition}

Notice that for $z \in S^1$, we have
\begin{equation} \label{eq:conjugate-Blaschke}
\overline{B_a(z)} = B_{\bar a}(\bar z) = B_{1/\bar a}(z)
\end{equation}

The preceding propositions may be rephrased in terms of tame symbols, and in fact we have:

\begin{proposition} \label{prop:tame-symbols-cases}
Suppose $f$ and $g$ are products of
\begin{enumerate}
\item invertible functions in $C(S^1) \cap H^\infty(S^1)$,
\item polynomials, and
\item Blaschke factors $B_a$ with $|a|<1$.
\end{enumerate}
If $f$ and $g$ are non-vanishing on $S^1$, then
\[
\tau(T_f, T_g) = \prod_{|\lambda|<1} c_{\lambda}(f,g)
\]
\end{proposition}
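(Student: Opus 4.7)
The plan is to reduce the identity to the elementary cases already established in Propositions \ref{prop:torsion-invertible} and \ref{prop:single-tame-symbol}, exploiting multiplicativity of both sides.

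First I would note that each factor in the three permitted classes lies in $H^\infty(S^1)$: polynomials and invertible elements of $C(S^1) \cap H^\infty(S^1)$ trivially, and a Blaschke factor $B_a$ with $|a|<1$ because its only pole at $1/\bar a$ lies outside the closed disk. Hence $T_{\psi_1 \psi_2} = T_{\psi_1} T_{\psi_2}$ holds exactly whenever $\psi_1, \psi_2$ are products of such factors, and Lemma \ref{lemma:torsion-Steinberg-etc}(1) gives multiplicativity of $\tau$ in each slot, while the tame symbol is bimultiplicative by definition. Next I would decompose each factor further: every Blaschke factor admits the explicit factorization $B_a(z) = \frac{|a|}{a}(a-z)(1-\bar a z)^{-1}$, in which $(1-\bar a z)^{-1}$ is invertible in $C(S^1) \cap H^\infty(S^1)$, and every polynomial factors as a leading scalar times linear factors $z-\lambda$. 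The non-vanishing of $f$ and $g$ on $S^1$ forces every $\lambda$ in this decomposition to satisfy $|\lambda| \ne 1$, so the Toeplitz operator attached to each elementary factor is Fredholm.

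It therefore suffices to verify the formula when each of $f, g$ is either invertible in $C(S^1) \cap H^\infty(S^1)$ (which subsumes nonzero constants) or a linear factor $z - \lambda$ with $|\lambda| \ne 1$. When both factors are invertible in $H^\infty$, the Toeplitz operators commute and are invertible, so $\tau = 1$, matching the empty tame symbol product (no zeros or poles in the disk). When one factor is invertible in $H^\infty$ and the other is $z - \lambda$, Proposition \ref{prop:torsion-invertible} yields the torsion as $\phi(\lambda)$ for $|\lambda| < 1$ and $1$ for $|\lambda| > 1$, which agrees with $c_\lambda(\phi, z-\lambda) = \phi(\lambda)$. When both are linear factors, the four sub-cases of Proposition \ref{prop:single-tame-symbol} correspond directly to the placements of $\lambda_1, \lambda_2$ relative to the unit circle, and each value agrees with the corresponding tame symbol contribution, including the sign $-1$ in the sub-case where both roots lie inside the disk, which arises as the product $c_{\lambda_1}(f,g) \cdot c_{\lambda_2}(f,g) = (\lambda_1 - \lambda_2)^{-1} (\lambda_2 - \lambda_1) = -1$.

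There is no significant obstacle; the only care required is to keep track of the unimodular scalars $|a|/a$ produced by the Blaschke decomposition together with the leading coefficients of the polynomial factors. These are handled uniformly, since a nonzero constant $c$ is itself an invertible element of $C(S^1) \cap H^\infty(S^1)$, and the identifications $\tau(cI, T_{z-\lambda}) = c$ for $|\lambda| < 1$ (by Proposition \ref{prop:torsion-invertible}) and $c_\lambda(c, z - \lambda) = c$ (by direct computation from Definition \ref{def:tame-symbol}) both hold.
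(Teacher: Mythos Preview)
Your proposal is correct and follows essentially the same route as the paper: reduce by bimultiplicativity (using that all factors lie in $H^\infty$ so Toeplitz products are exact) to the elementary cases handled by Propositions \ref{prop:torsion-invertible} and \ref{prop:single-tame-symbol}, after observing that Blaschke factors $B_a$ with $|a|<1$ split as a linear polynomial times an invertible $H^\infty$ function. You are somewhat more explicit than the paper about tracking scalar constants and about the case-by-case tame symbol verification, but the argument is the same.
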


\begin{proof}
A straightforward calculation with $f(z) = z-\lambda_1$ and $g(z) = z - \lambda_2$ verifies that
\[
\prod_{|\lambda_i|<1} c_{\lambda_i}(z-\lambda_1, z-\lambda_2)
\]
agrees with (1)-(4) in Proposition \ref{prop:single-tame-symbol}.  Since both joint torsion and the tame symbol are multiplicative, the result holds for polynomials.  
By Proposition \ref{prop:torsion-invertible}, we find that the result holds for factors of type (1) and (2).  If $|a|<1$, then $B_a$ is the product of a polynomial and
$(1-\bar a z)^{-1} \in H^{\infty}(S^1)$.  
Hence factors of type (3) are products of types (1) and (2).
\end{proof}

We will need the following Beurling-Szeg\H{o} factorization into inner and outer functions.  See for instance \cite{Colwell}.

\begin{theorem} \label{thm:Beuerling-Szego}
If $f \in H^{\infty}(S^1)$ is continuous and non-vanishing on $S^1$, then there exists an outer function $\phi$ that is invertible in $H^{\infty}(S^1)$ such that
\[
f = \phi \cdot \prod B_a
\]
where the above product is taken over finitely many zeros $a$ with $|a|<1$.
\end{theorem}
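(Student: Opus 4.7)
The plan is to first observe that since $f \in H^\infty(S^1)$ is continuous on $S^1$, it arises as the boundary values of a function (still denoted $f$) that is holomorphic on the open unit disk $\mathbb{D}$ and continuous on $\bar{\mathbb{D}}$, i.e., $f$ lies in the disk algebra $A(\bar{\mathbb{D}})$. I would then locate the zeros of $f$: by hypothesis none lie on $S^1$, and by holomorphy the zeros in $\mathbb{D}$ form a discrete set. Because $f$ is continuous and non-vanishing on the compact circle, there is a uniform lower bound $|f(z)| \geq \delta > 0$ on some neighborhood of $S^1$; combined with discreteness in $\mathbb{D}$ this forces the zero set to be finite. Let $a_1, \ldots, a_n \in \mathbb{D}$ be these zeros, listed with multiplicity.

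Next I would form the finite Blaschke product $B = \prod_{i=1}^n B_{a_i}$ and set $\phi = f/B$. Each factor $B_{a_i}$ has a simple zero at $a_i$ and no others in $\bar{\mathbb{D}}$, so the quotient $\phi$ is holomorphic on $\mathbb{D}$ with the zeros of $f$ exactly cancelled. Since $|B_{a_i}| = 1$ on $S^1$, the boundary function $B|_{S^1}$ is continuous and unimodular, so $\phi$ extends continuously to $\bar{\mathbb{D}}$ and is non-vanishing there. In particular, $1/\phi$ also lies in $A(\bar{\mathbb{D}}) \subset H^\infty(S^1)$, giving invertibility in $H^\infty(S^1)$.

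It remains to verify that $\phi$ is outer. Since $\phi$ is holomorphic and non-vanishing on the simply connected domain $\mathbb{D}$, the function $\log |\phi|$ is harmonic on $\mathbb{D}$, and by continuity of $\phi$ and $1/\phi$ up to $\bar{\mathbb{D}}$ it extends continuously to $S^1$. Therefore $\log |\phi|$ equals the Poisson integral of its boundary values, which is precisely the defining property of an outer function in $H^\infty$. Equivalently, one may choose a continuous branch of $\log \phi$ on $\bar{\mathbb{D}}$ (possible since $\phi$ avoids $0$ on a simply connected set), so that $\phi = e^{g}$ with $g \in A(\bar{\mathbb{D}})$, which exhibits $\phi$ as outer.

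The main technical point I would expect to be delicate is the verification of the outer condition from the definition; the cleanest route is the harmonic/Poisson-integral characterization above, which turns the problem into an elementary boundary-value statement once continuity of $\phi$ and $1/\phi$ on $\bar{\mathbb{D}}$ is established. The rest of the argument is bookkeeping about zeros inside the disk.
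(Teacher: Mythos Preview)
The paper does not supply its own proof of this theorem; it is quoted as a standard result with a reference to \cite{Colwell}. Your argument is correct and is essentially the standard elementary proof in the disk-algebra setting: identify $f$ with an element of $A(\bar{\mathbb D})$, use continuity and non-vanishing on $S^1$ to confine the zeros to a compact subset of $\mathbb D$ and hence deduce finiteness, divide out the finite Blaschke product, and observe that the quotient $\phi$ is continuous and zero-free on $\bar{\mathbb D}$, hence invertible in $H^\infty$. Your verification that $\phi$ is outer---either via the Poisson integral characterization of $\log|\phi|$ or by producing a holomorphic logarithm $g \in A(\bar{\mathbb D})$ with $\phi = e^g$---is also correct; the second route is perhaps the cleaner one, since $e^g$ with $g$ in the disk algebra is immediately outer by the Schwarz integral formula.
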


The following result was first obtained in \cite[Proposition 1]{Joint}.  See also \cite{Park}.  See \cite{Kaad-Nest3} for a generalization to the multivariable setting.

\begin{theorem} \label{thm:tame-symbols}
If $f,g \in H^{\infty}(S^1)$ are continuous and non-vanishing on $S^1$, then $\tau(T_f, T_g)$ is the product of tame symbols:
\[
\tau(T_f, T_g) = \prod_{|a|<1} c_a(f,g)
\]
\end{theorem}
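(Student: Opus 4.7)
The plan is to reduce Theorem \ref{thm:tame-symbols} to the already-established Proposition \ref{prop:tame-symbols-cases} by applying the Beurling-Szegő factorization (Theorem \ref{thm:Beuerling-Szego}) to each of $f$ and $g$. Concretely, I would write
\[
f = \phi_f \cdot \prod_i B_{a_i}, \qquad g = \phi_g \cdot \prod_j B_{b_j},
\]
where $\phi_f, \phi_g$ are outer functions invertible in $H^\infty(S^1)$, and the Blaschke products run over the zeros $a_i$ of $f$ and $b_j$ of $g$ in the open unit disk. These zero sets are automatically finite, since $f$ and $g$ are holomorphic in the disk and non-vanishing on $S^1$, so their zeros are confined to a compact subset of the open disk.

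Next I would verify that $\phi_f$ and $\phi_g$ belong to $C(S^1) \cap H^\infty(S^1)$, so that they qualify as factors of type (1) in Proposition \ref{prop:tame-symbols-cases}. This is where one has to be slightly careful: the Beurling-Szegő theorem as stated only gives invertibility in $H^\infty(S^1)$. However, each Blaschke factor $B_a$ with $|a|<1$ is continuous and unimodular on $S^1$, so $\prod_i B_{a_i}$ is continuous and non-vanishing on $S^1$. Since $f$ is continuous on $S^1$ by hypothesis, the identity $\phi_f = f \cdot \prod_i B_{a_i}^{-1}$ shows that $\phi_f$ is continuous on $S^1$; and analogously for $\phi_g$.

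With these factorizations in hand, the hypotheses of Proposition \ref{prop:tame-symbols-cases} are met, and that proposition delivers
\[
\tau(T_f, T_g) = \prod_{|\lambda|<1} c_\lambda(f,g)
\]
directly. The consistency of the right-hand side with the decomposition is ensured by multiplicativity of both joint torsion (Lemma \ref{lemma:torsion-Steinberg-etc}(1)) and of the tame symbol in each argument, together with the fact that outer invertible factors contribute trivial tame symbols at points in the open disk (since they have neither zeros nor poles there).

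The main substantive work has already been done earlier: the evaluation on the building blocks $T_z - \lambda$ in Proposition \ref{prop:single-tame-symbol}, the treatment of invertible $H^\infty$ factors via Proposition \ref{prop:torsion-invertible}, and the assembly via multiplicativity in Proposition \ref{prop:tame-symbols-cases}. Thus the only potential obstacle here is the regularity check for the outer factor on $S^1$, and this is easily dispatched as above. The proof is essentially a bookkeeping step invoking Beurling-Szegő.
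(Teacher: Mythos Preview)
Your proposal is correct and follows essentially the same approach as the paper: factor $f$ and $g$ via Beurling--Szeg\H{o} and then invoke Proposition~\ref{prop:tame-symbols-cases} together with bimultiplicativity of joint torsion and the tame symbol. Your explicit verification that the outer factors $\phi_f,\phi_g$ lie in $C(S^1)\cap H^\infty(S^1)$ is a useful detail that the paper leaves implicit.
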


\begin{proof}
As in the preceding theorem, write
\[ 
f = \phi_f \cdot \prod B_{a}, \quad g = \phi_g \cdot \prod B_{b}
\]
By Proposition \ref{prop:tame-symbols-cases}, the joint torsion numbers
\[
\tau(\phi_f, \phi_g), \tau(\phi_f, B_{b}), \tau(B_{a}, \phi_g), \tau(B_{a}, B_{b})
\]
agree with the corresponding tame symbols.  The result then follows since both joint torsion and the tame symbol are bimultiplicative.
\end{proof}

\subsection{$L^\infty$ symbols}

In this section we extend the above result to the almost commuting setting. 

\begin{proposition} \label{prop:non-commutative-tame-symbol-invertible}
Suppose $\phi \in C(S^1) \cap H^\infty(S^1)$ is invertible in $H^\infty(S^1)$.
\begin{enumerate}
\item If $|\lambda|>1$, then $\tau(T_\phi, T_{\bar z} - \lambda) = \frac{\phi(1/\lambda)}{\phi(0)}$.
\item If $|\lambda|<1$, then $\tau(T_\phi, T_{\bar z} - \lambda) = \frac{1}{\phi(0)}$.
\end{enumerate}
\end{proposition}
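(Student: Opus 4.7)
The plan hinges on the Toeplitz factorization $T_{\bar z} - \lambda = T_{\bar z} \cdot T_{1 - \lambda z}$, which follows from the product rule $T_f T_g = T_{fg}$ when $f \in \overline{H^\infty}$ (applied to $\bar z (1 - \lambda z) = \bar z - \lambda$, using $z \bar z = 1$ on $S^1$). Multiplicativity of joint torsion (Lemma \ref{lemma:torsion-Steinberg-etc}(1)) then gives
\[
\tau(T_\phi, T_{\bar z} - \lambda) = \tau(T_\phi, T_{\bar z}) \cdot \tau(T_\phi, T_{1-\lambda z}),
\]
so the problem splits into computing the two factors on the right. The second factor is handled with existing tools: for $|\lambda| < 1$, $1/(1-\lambda z) \in H^\infty$ so $T_{1-\lambda z}$ is invertible and commutes with $T_\phi$, giving $\tau(T_\phi, T_{1-\lambda z}) = 1$; for $|\lambda| > 1$, write $T_{1 - \lambda z} = -\lambda (T_z - 1/\lambda)$ and apply Proposition \ref{prop:torsion-invertible}(2) (valid since $|1/\lambda| < 1$) together with $\tau(T_\phi, -\lambda I) = 1$, obtaining $\tau(T_\phi, T_{1 - \lambda z}) = \phi(1/\lambda)$.

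To pin down the first factor, I will compute $\tau(T_\phi, T_{\bar z} - \lambda)$ directly in the range $|\lambda| > 1$. There both $T_\phi$ and $T_{\bar z} - \lambda$ are invertible (the latter because $\|T_{\bar z}\| = 1 < |\lambda|$), so joint torsion equals the Fredholm determinant of the multiplicative commutator. A direct Fourier series computation yields the rank-one identity
\[
[T_\phi, T_{\bar z}]f = f(0) \cdot (\phi(0) - \phi)/z,
\]
so $T_\phi (T_{\bar z} - \lambda) T_\phi^{-1} (T_{\bar z} - \lambda)^{-1} = I + K$ with $K = [T_\phi, T_{\bar z}] T_\phi^{-1}(T_{\bar z} - \lambda)^{-1}$ of rank one, whence $\det(I + K) = 1 + \tr K$. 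Using the Neumann series $(T_{\bar z} - \lambda)^{-1} = -\sum_{k \geq 0} \lambda^{-(k+1)} T_{\bar z}^k$ together with $T_\phi^{-1} = T_{1/\phi}$, the trace of $K$ reduces to extracting the constant term of an $H^2$-function, and I expect to obtain $\tr K = \phi(1/\lambda)/\phi(0) - 1$, which proves (1).

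Feeding this back into the factorization identity at any $|\lambda| > 1$ forces $\tau(T_\phi, T_{\bar z}) = 1/\phi(0)$, automatically independent of $\lambda$ as a sanity check; substituting into the factorization for $|\lambda| < 1$ then gives (2). The main technical step is the rank-one trace calculation: while elementary, it requires careful bookkeeping with the Neumann series to confirm that only the constant Fourier coefficients contribute and that the telescoping $\sum_{m \geq 1} \phi_m/\lambda^m = \phi(1/\lambda) - \phi(0)$ appears exactly as claimed.
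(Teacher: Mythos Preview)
Your argument is correct; in particular the rank-one computation goes through exactly as you outline, with $\tr K = \psi_0\bigl(\phi(1/\lambda)-\phi(0)\bigr)=\phi(1/\lambda)/\phi(0)-1$ because the constant coefficient of a product of two $H^2$-functions is the product of their constant coefficients. The paper, however, bypasses this calculation entirely. It first treats $\lambda=0$ by observing that $T_{\bar z}T_z=I$ on $H^2$, so multiplicativity gives $\tau(T_\phi,T_{\bar z})\,\tau(T_\phi,T_z)=\tau(T_\phi,I)=1$, and Proposition~\ref{prop:torsion-invertible} supplies $\tau(T_\phi,T_z)=\phi(0)$; hence $\tau(T_\phi,T_{\bar z})=1/\phi(0)$ in one line. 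For $\lambda\neq 0$ the paper uses the same factorization you do (written as $-\tfrac{1}{\lambda}(\bar z-\lambda)z=z-\tfrac{1}{\lambda}$) and reduces both cases to Proposition~\ref{prop:torsion-invertible} applied at $1/\lambda$. So the two proofs share the factorization step; the difference is that you establish~(1) by a direct Fredholm-determinant computation and then back out $\tau(T_\phi,T_{\bar z})$, whereas the paper obtains $\tau(T_\phi,T_{\bar z})$ immediately from $T_{\bar z}T_z=I$ and never needs the rank-one trace. Your route is longer but has the merit of giving an independent, hands-on verification of~(1) that does not rely on multiplicativity for that case.
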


\begin{proof}
If $\lambda = 0$, then
\[
\tau( T_\phi, T_{\bar z} ) \cdot \tau( T_\phi, T_{z} )
= \tau( T_\phi, I )
= 1
\]
By Proposition \ref{prop:torsion-invertible}, the second factor is $\phi(0)$, so the result follows in this case.

If $\lambda \neq 0$, we may write
\[
- \frac{1}{\lambda} ( \bar z - \lambda ) z = z - \frac{1}{\lambda}
\]
so that
\[
\tau(T_\phi, T_{-1/\lambda}) 
\cdot \tau(T_\phi, T_{\bar z -\lambda}) 
\cdot \tau(T_\phi, T_z)
= \tau(T_\phi, T_{z-1/\lambda})
\]
The first factor is 1 since $\phi$ is invertible and the third factor is $\phi(0)$.  Hence
\[
\tau(T_\phi, T_{\bar z -\lambda}) = \frac{\tau(T_\phi, T_{z-1/\lambda})}{ \phi(0) }
\]
and result follows by the Proposition \ref{prop:torsion-invertible}.
\end{proof}

\begin{proposition} \label{prop:non-commutative-single-tame-symbol}
Let $\lambda, \mu \in \mathbf C$.
\begin{enumerate}
\item If $| \lambda_1 | > 1$ and $| \lambda_2 | > 1$, then 
$\tau(T_z-\lambda_1, T_{\bar z}-\lambda_2) = 1 - ( \lambda_1 \lambda_2 )^{-1}$.

\item If $| \lambda_1 | < 1$ and $| \lambda_2 | > 1$, then 
$\tau(T_z-\lambda_1, T_{\bar z}-\lambda_2) = - \lambda_2^{-1}$.

\item If $| \lambda_1 | > 1$ and $| \lambda_2 | < 1$, then 
$\tau(T_z-\lambda_1, T_{\bar z}-\lambda_2) = - \lambda_1^{-1}$.

\item If $| \lambda_1 | < 1$ and $| \lambda_2 | < 1$, then 
$\tau(T_z-\lambda_1, T_{\bar z}-\lambda_2) = (\lambda_1 \lambda_2 - 1)^{-1}$.
\end{enumerate}
\end{proposition}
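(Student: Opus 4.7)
My plan is to dispatch the four cases with three distinct mechanisms: Cases (1) and (3) follow directly from Proposition \ref{prop:non-commutative-tame-symbol-invertible}, Case (2) reduces to Case (3) via a complex-conjugation symmetry, and Case (4) requires a genuine multiplicative decomposition. For Cases (1) and (3) the point is that $|\lambda_1| > 1$ makes $\phi(z) = z - \lambda_1$ invertible in $H^\infty(S^1)$, so Proposition \ref{prop:non-commutative-tame-symbol-invertible} applies with this $\phi$ and $\lambda = \lambda_2$; part (1) produces $\phi(1/\lambda_2)/\phi(0) = 1 - (\lambda_1\lambda_2)^{-1}$, and part (2) produces $1/\phi(0) = -\lambda_1^{-1}$. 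For Case (2), combining parts (3) and (6) of Lemma \ref{lemma:torsion-Steinberg-etc} yields
\[
\tau(T_z - \lambda_1,\, T_{\bar z} - \lambda_2) = \overline{\tau(T_z - \bar\lambda_2,\, T_{\bar z} - \bar\lambda_1)},
\]
whose right-hand side is an instance of Case (3) (since $|\bar\lambda_2|>1$ and $|\bar\lambda_1|<1$), so it equals $-\bar\lambda_2^{-1}$; conjugating gives $-\lambda_2^{-1}$.

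Case (4) is the substantive one, and I would attack it in the spirit of the proof of Proposition \ref{prop:single-tame-symbol}(4). The two key identities on $S^1$ are $z - \lambda_1 = -\lambda_1 z(\bar z - 1/\lambda_1)$ and $\bar z - \lambda_2 = -\lambda_2 \bar z(z - 1/\lambda_2)$, which lift to the operator equalities $T_{z-\lambda_1} = -\lambda_1 T_z T_{\bar z - 1/\lambda_1}$ and $T_{\bar z - \lambda_2} = -\lambda_2 T_{\bar z} T_{z - 1/\lambda_2}$; the lifts are valid because $T_z T_g = T_{zg}$ for any $g$ (as $z \in H^\infty$) and $T_{\bar z} T_g = T_{\bar z g}$ for any $g$ (as $\bar z \in \overline{H^\infty}$). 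Bimultiplicativity of joint torsion then splits the target into
\[
\tau(-\lambda_1 I,\, T_{\bar z}-\lambda_2)\cdot \tau(T_z,\, T_{\bar z}-\lambda_2)\cdot \tau(T_{\bar z}-1/\lambda_1,\, T_{\bar z}-\lambda_2).
\]
I would compute each factor: the first equals $(-\lambda_1)^{-\ind(T_{\bar z}-\lambda_2)} = -\lambda_1^{-1}$ (since $\bar z - \lambda_2$ has winding $-1$ for $|\lambda_2|<1$); the third is handled by conjugation via Lemma \ref{lemma:torsion-Steinberg-etc}(6) combined with Proposition \ref{prop:single-tame-symbol}(3), producing $(\lambda_2 - 1/\lambda_1)^{-1} = \lambda_1/(\lambda_1\lambda_2-1)$; and the middle factor is itself expanded using the symmetric factorization of $T_{\bar z - \lambda_2}$, yielding three pieces $\tau(T_z, -\lambda_2 I) = -\lambda_2^{-1}$, $\tau(T_z, T_{\bar z}) = -1$, and $\tau(T_z, T_{z-1/\lambda_2}) = -\lambda_2$ (the last from Proposition \ref{prop:torsion-invertible}(2) followed by Lemma \ref{lemma:torsion-Steinberg-etc}(3)), whose product is $-1$. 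Multiplying everything then gives $(\lambda_1\lambda_2 - 1)^{-1}$.

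The main obstacle will be the factor $\tau(T_z, T_{\bar z})$ inside the middle factor of Case (4); it is not covered by either Proposition \ref{prop:torsion-invertible} or Proposition \ref{prop:non-commutative-tame-symbol-invertible} because neither $z$ nor $\bar z$ is invertible in $H^\infty$. I would handle it by observing that the shift relations $T_{\bar z} T_z = I$ and $T_z T_{\bar z} = I - P_0$ make $T_{\bar z}$ a two-sided inverse of $T_z$ in $\mathcal B / \mathcal L^1$, so parts (7) and (8) of Lemma \ref{lemma:torsion-Steinberg-etc} give $\tau(T_z, T_{\bar z}) = \tau(T_z, T_z)^{-1} = (-1)^{-\ind(T_z)} = -1$. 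The degenerate sub-cases $\lambda_1 = 0$ or $\lambda_2 = 0$ of Case (4), where the initial factorizations break down, can be absorbed by continuity of joint torsion via Proposition \ref{prop:torsion-is-continuous}, since both the claimed formula $(\lambda_1\lambda_2-1)^{-1}$ and the pair $(T_z - \lambda_1, T_{\bar z} - \lambda_2)$ depend continuously on the parameters.
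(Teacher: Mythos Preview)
Your argument is correct, but the paper takes a shorter and more uniform route. Rather than splitting into separate mechanisms for the four cases, the paper applies to all of them the \emph{same} identity already used in the proof of Proposition~\ref{prop:non-commutative-tame-symbol-invertible}, namely $-\tfrac{1}{\lambda_2}(\bar z-\lambda_2)\,z=z-\tfrac{1}{\lambda_2}$, which by multiplicativity yields
\[
\tau(T_{z-\lambda_1},\,T_{-1/\lambda_2})\cdot\tau(T_{z-\lambda_1},\,T_{\bar z-\lambda_2})\cdot\tau(T_{z-\lambda_1},\,T_z)=\tau(T_{z-\lambda_1},\,T_{z-1/\lambda_2}),
\]
and every factor other than the one sought is read off directly from Proposition~\ref{prop:single-tame-symbol} (together with $\tau(A,cI)=c^{\ind A}$). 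Your treatment of (1)--(3) is just as efficient, but for (4) the paper's single reduction avoids the double factorization, the separate computation of $\tau(T_z,T_{\bar z})$, and the continuity patch at $\lambda_i=0$.

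One small correction in your Case~(4): the claim ``$T_z T_g=T_{zg}$ for any $g$ (as $z\in H^\infty$)'' is false; the rule is $T_g T_f=T_{gf}$ when the \emph{right} factor $f$ lies in $H^\infty$, or dually $T_f T_g=T_{fg}$ when the left factor $f$ lies in $\overline{H^\infty}$. In particular $T_z T_{\bar z-1/\lambda_1}$ and $T_{z(\bar z-1/\lambda_1)}$ differ by the rank-one projection onto constants. This is harmless for your proof, since joint torsion depends only on the class in $\mathcal B/\mathcal L^1$; alternatively, reorder the factors as $T_{z-\lambda_1}=-\lambda_1\,T_{\bar z-1/\lambda_1}\,T_z$, which is an exact identity.
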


\begin{proof}
This result follows from Proposition \ref{prop:single-tame-symbol}, as the preceding proposition follows from Proposition \ref{prop:torsion-invertible}.
\end{proof}

Notice that $\bar z - \lambda$ extends meromorphically to the interior of the unit disk as
\[
\frac{1}{z} - \lambda
\]
with a simple pole at $0$ and a simple zero at $\frac{1}{\lambda}$.
A straightforward verification shows that the previous two propositions express the joint torsion as a product of tame symbols.
Since a Blaschke factor is the ratio of two linear factors, we have the following non-commutative generalization of Proposition \ref{prop:tame-symbols-cases}:

\begin{proposition} \label{prop:non-commutative-tame-symbols-cases}
Suppose $f$ and $g$ are products of
\begin{enumerate}
\item invertible functions in $C(S^1) \cap H^\infty(S^1)$,
\item trigonometric polynomials in $z$ and $\bar z$, and
\item Blaschke factors $B_a$ with $a \in \mathbf C \cup \{ \infty \}$.
\end{enumerate}
If $f$ and $g$ are non-vanishing on $S^1$, then
\[
\tau(T_f, T_g) = \prod_{|\lambda|<1} c_{\lambda}(f,g)
\]
Here, $f$ and $g$ have been extended meromorphically to the interior of the unit disk.
\end{proposition}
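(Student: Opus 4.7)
The plan is to reduce the identity to a short list of atomic pairs via bimultiplicativity, exactly as in the proof of Proposition~\ref{prop:tame-symbols-cases}. By parts (1) and (7) of Lemma~\ref{lemma:torsion-Steinberg-etc}, joint torsion is bimultiplicative on Toeplitz operators of the types in question (each is Fredholm because its symbol is non-vanishing on $S^1$, and the necessary trace-class commutator conditions hold by Theorem~\ref{thm:Berger-Shaw} applied to the atomic symbols); the tame symbol is bimultiplicative by definition. I would first write each trigonometric polynomial non-vanishing on $S^1$ as a power of $z$ times a polynomial in $z$ with no roots on $S^1$, i.e.\ a product of linear factors $z-\lambda$, and decompose each Blaschke factor $B_a$ with $a\in \mathbf{C}\setminus\{0\}$ as the quotient of $a-z$ and the $H^\infty$-invertible factor $1-\bar a z$, with the degenerate cases $B_0=z$ and $B_\infty=\bar z$. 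Using Lemma~\ref{lemma:torsion-Steinberg-etc}(7) to strip off the denominators, the claim reduces to verifying the identity when $f$ and $g$ each range over three atomic classes: an invertible element of $C(S^1)\cap H^\infty(S^1)$; a factor $z-\lambda$ with $|\lambda|\neq 1$; or a factor $\bar z-\lambda$ with $|\lambda|\neq 1$.

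Next I would verify the identity for each unordered pair of atomic types. The pair (invertible, invertible) is trivial: the two Toeplitz operators commute and are invertible, so $\tau=1$ by Lemma~\ref{lemma:multiplicative-Lefschetz}, and the holomorphic extensions have no zeros or poles in the disk, so the tame symbol product is also $1$. The pairs (invertible, $z-\lambda$) and (invertible, $\bar z-\lambda$) are Propositions~\ref{prop:torsion-invertible} and~\ref{prop:non-commutative-tame-symbol-invertible} respectively; to compare the values stated there with the tame symbol product, one uses that the meromorphic extension of $\bar z-\lambda$ is $(1-\lambda z)/z$, with a simple pole at $0$ and a simple zero at $1/\lambda$ (inside the disk iff $|\lambda|>1$). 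The pair ($z-\lambda_1,z-\lambda_2$) is Proposition~\ref{prop:single-tame-symbol}, and the mixed pair ($z-\lambda_1,\bar z-\lambda_2$) is Proposition~\ref{prop:non-commutative-single-tame-symbol}; in both, a tabulation of orders at the interior zeros and poles recovers the tame symbol product by inspection.

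The only pairing not covered explicitly in the excerpt is ($\bar z-\lambda_1,\bar z-\lambda_2$), and this is the one step where I expect a small additional argument, though it remains light. I would apply Lemma~\ref{lemma:torsion-Steinberg-etc}(6) to obtain
\[
\overline{\tau(T_{\bar z}-\lambda_1,\,T_{\bar z}-\lambda_2)} = \tau(T_z-\bar\lambda_1,\,T_z-\bar\lambda_2)^{-1},
\]
and then read off the right-hand side from Proposition~\ref{prop:single-tame-symbol}. For each of the four sub-cases $|\lambda_i|\gtrless 1$, the product of tame symbols for the extensions $(1-\lambda_1 z)/z$ and $(1-\lambda_2 z)/z$ is computed at $z=0$ and, when they lie in the disk, at the zeros $1/\lambda_i$; the sign $(-1)^{\mathrm{ord}_0(f)\,\mathrm{ord}_0(g)}=(-1)^{(-1)(-1)}=-1$ arising at the common pole at $0$ is exactly what makes the resulting product match the conjugation formula. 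The main obstacle is purely bookkeeping, namely tracking orders, signs, and which of $\lambda_i$ versus $1/\lambda_i$ lies inside the disk in each sub-case; once this is done, bimultiplicativity delivers the general statement.
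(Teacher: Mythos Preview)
Your approach is correct and matches the paper's: reduce by bimultiplicativity to atomic pairs and invoke Propositions~\ref{prop:torsion-invertible}, \ref{prop:single-tame-symbol}, \ref{prop:non-commutative-tame-symbol-invertible}, and \ref{prop:non-commutative-single-tame-symbol}, together with the observation that Blaschke factors are ratios of linear factors. The paper leaves the proof implicit in the paragraph preceding the statement, so your write-up is in fact more explicit than the original; in particular, your handling of the pair $(\bar z-\lambda_1,\bar z-\lambda_2)$ via Lemma~\ref{lemma:torsion-Steinberg-etc}(6) fills in a case the paper does not isolate.
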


Now suppose $f \in L^\infty(S^1)$ such that $T_f$ is Fredholm.
Then $f$ is continuous and non-vanishing on $S^1$, say with winding number $n$.  
The function $z^{-n} f(z)$ has winding number zero, so there is a continuous function $\tilde f$ such that
\[
e^{\tilde f(z)} = z^{-n} f(z)
\]
Let $\tilde f_+ = P \tilde f$, $\tilde f_- = (I-P) \tilde f$, where $P:L^2(S^1) \to H^2(S^1)$ is the orthogonal projection as usual.  Then
\begin{equation} \label{eq:holomorphic-antiholomorphic}
f(z) = z^{-n} e^{\tilde f_-} e^{\tilde f_+}
\end{equation}
Thus we may write
$f = f_- f_+$
with $f_+, \bar f_- \in H^{\infty}$ continuous and non-vanishing.
By Theorem \ref{thm:Beuerling-Szego}, we can write
\[
f_+ = f_1 \cdot \prod B_a, \quad f_- = f_2 \cdot \prod B_b
\]
where $f_1, \bar f_2 \in H^\infty$ are invertible in $H^\infty$, 
the zeros $a$ satisfy $|a|<1$,
and the zeros $b$ satisfy $|b|>1$.  
Letting $f_0$ be the product of Blaschke factors above, we have the factorization
\[
f = f_0 f_1 f_2
\]

If $f$ is smooth, then so is $z^{-n}f$, and we can take $\tilde f$ to be smooth as well.  Consequently $\tilde f_+$ and $\tilde f_-$ are smooth, for example because the projection $P$ can be expressed in terms of the Hilbert transform, which preserves regularity.  
Hence the related functions $f_\pm, f_i$, $i=0,1,2$, are smooth as well.  
Define $g_i$, $i=0,1,2$, similarly.  
As in Proposition \ref{prop:Eschmeier-Putinar-torsion}, we see that joint torsion factors as a discrete part (tame symbols) and a continuous part (a determinant):

\begin{theorem} \label{thm:tame-symbols-L-infinty}
If $f,g \in C^{\infty}(S^1)$ are non-vanishing on $S^1$, then
\[
\tau(T_f, T_g) = \prod_{|a|<1} 
c_a(f_0 f_1, g_0 g_1) 
\cdot \frac{ \overline{ c_a( \bar g_0, \bar f_2) } }{ \overline{ c_a( \bar f_0, \bar g_2) } }
\cdot \frac{ \tau(T_{f_1}, T_{g_2}) }{ \tau(T_{g_1}, T_{f_2}) }
\]
Here $f_i$, $g_i$ are as above, and
\[
\tau(T_{f_1}, T_{g_2}) 
= \exp \left( \frac{1}{2 \pi i} \int \log f_1 \, d ( \log g_2 ) \right)
\]
for continuous choices of logarithms of $f_1$ and $g_2$, 
and similarly for $\tau(T_{g_1}, T_{f_2})$.
\end{theorem}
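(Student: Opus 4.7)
The plan is to leverage the factorizations $f = f_0 f_1 f_2$ and $g = g_0 g_1 g_2$ together with the bimultiplicativity of joint torsion (Lemma \ref{lemma:torsion-Steinberg-etc}(1)) to expand $\tau(T_f, T_g)$ as a product of nine factors $\tau(T_{f_i}, T_{g_j})$. I would then identify which of these assemble into the tame symbol products appearing in the statement and which combine to form the continuous ratio $\tau(T_{f_1}, T_{g_2})/\tau(T_{g_1}, T_{f_2})$.

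For the discrete part, group the four terms with $i,j \in \{0,1\}$ into $\tau(T_{f_0 f_1}, T_{g_0 g_1})$. Since $f_0 f_1$ and $g_0 g_1$ are each products of an invertible $H^\infty$ function and Blaschke factors, Proposition \ref{prop:non-commutative-tame-symbols-cases} gives $\prod_{|a|<1} c_a(f_0 f_1, g_0 g_1)$. For the mixed term $\tau(T_{f_0}, T_{g_2})$, apply Lemma \ref{lemma:torsion-Steinberg-etc}(6) to rewrite it as $\overline{\tau(T_{\bar f_0}, T_{\bar g_2})}^{-1}$; by \eqref{eq:conjugate-Blaschke} the symbol $\bar f_0$ is again a Blaschke product, while $\bar g_2$ is invertible in $H^\infty$, so Proposition \ref{prop:non-commutative-tame-symbols-cases} evaluates this as $\prod \overline{c_a(\bar f_0, \bar g_2)}^{-1}$. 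The term $\tau(T_{f_2}, T_{g_0})$ is handled analogously, producing $\prod \overline{c_a(\bar g_0, \bar f_2)}$ after using the antisymmetry $c_a(p,q) c_a(q,p) = 1$ of the tame symbol. The $(2,2)$ term reduces by conjugation to $\tau(T_{\bar f_2}, T_{\bar g_2})^{-1}$, which is $1$ because all tame symbols are trivial when neither factor has zeros in the open disk. Finally, the two remaining terms combine via Lemma \ref{lemma:torsion-Steinberg-etc}(3) into the desired ratio $\tau(T_{f_1}, T_{g_2})/\tau(T_{g_1}, T_{f_2})$.

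For the integral formula for $\tau(T_{f_1}, T_{g_2})$, I would pick smooth logarithms $\log f_1$ and $\log g_2$, available because each of $f_1$ and $\bar g_2$ is smooth and invertible in $H^\infty$ and therefore extends holomorphically without zeros to the closed disk. Since the Fredholm module $(L^\infty(S^1), L^2(S^1), 2P-I)$ restricted to smooth symbols is $2$-summable by Lemma \ref{lemma:2-norm-commutator}, Proposition \ref{prop:L1-norm} applied to the exponential function implies $T_{f_1} - e^{T_{\log f_1}} \in \mathcal L^1$ and likewise for $g_2$. Because joint torsion equals Brown's determinant invariant and so depends only on the images in $\mathcal B/\mathcal L^1$, this replacement is legitimate, yielding $\tau(T_{f_1}, T_{g_2}) = \tau(e^{T_{\log f_1}}, e^{T_{\log g_2}})$. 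Lemma \ref{lemma:torsion-of-exponentials} then produces $e^{\tr[T_{\log f_1}, T_{\log g_2}]}$, and the Berger-Shaw formula (Theorem \ref{thm:Berger-Shaw}) rewrites this trace as $(2\pi i)^{-1}\int \log f_1 \, d \log g_2$. The argument for $\tau(T_{g_1}, T_{f_2})$ is identical.

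The main obstacle is the careful bookkeeping of signs and conjugations in the mixed tame symbol terms: the precise form of the ratio in the statement forces one to swap the arguments of $c_a(\cdot,\cdot)$ using antisymmetry and to invoke \eqref{eq:conjugate-Blaschke} at just the right moment so that the Blaschke factors with $|b|>1$ get recognized as conjugates of Blaschke factors inside the disk. The other conceptual subtlety is the passage from Toeplitz operators to exponentials of Toeplitz operators in the continuous piece; this relies crucially on the fact that joint torsion descends through the quotient $\pi: \mathcal B \to \mathcal B/\mathcal L^1$ and cannot be effected by a naive $\mathcal L^1$-continuity argument alone.
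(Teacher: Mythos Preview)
Your proposal is correct and follows essentially the same route as the paper's proof: bimultiplicativity, then identification of each piece via conjugation (Lemma~\ref{lemma:torsion-Steinberg-etc}(6)), the tame-symbol proposition, and Helton--Howe--Pincus plus Berger--Shaw for the continuous factors. The only cosmetic differences are that the paper groups the nine factors into four blocks first and disposes of $\tau(T_{f_2},T_{g_2})$ by observing directly that these two operators are invertible and commute; also, for the integral formula you can bypass Proposition~\ref{prop:L1-norm} entirely, since $T_{f_1}=e^{T_{\log f_1}}$ and $T_{g_2}=e^{T_{\log g_2}}$ hold \emph{exactly} (analytic and co-analytic Toeplitz operators respect products).
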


\begin{proof}
By the multiplicative property of joint torsion, we find that
\[
\tau(T_f, T_g) = \tau(T_{f_0 f_1}, T_{g_0 g_1}) \cdot \tau(T_{f_0 f_1}, T_{g_2}) 
\cdot \tau(T_{f_2}, T_{g_0 g_1}) \cdot \tau(T_{f_2}, T_{g_2})
\]
The first factor is the product of tame symbols by Theorem \ref{thm:tame-symbols}.
The fourth factor is 1 since $T_{f_2}$ and $T_{g_2}$ are invertible commuting operators.  
Next we calculate the second factor;
the third factor is dealt with similarly.
Again using multiplicativity, the second factor is
\[
\tau(T_{f_0}, T_{g_2}) \cdot \tau(T_{f_1}, T_{g_2})
\]

For the first factor, notice that $\bar f_0$ is still a Blaschke product and $\bar g_2 \in H^\infty$ is invertible in $H^\infty$.  
Hence $\tau(T_{f_0}, T_{g_2}) =\overline{ \tau(T_{\bar f_0}, T_{\bar g_2})^{-1} }$ by Lemma \ref{lemma:torsion-Steinberg-etc}(6), and the latter is $\overline{ c_a( \bar f_0, \bar g_2)^{-1} }$ by Proposition \ref{prop:non-commutative-tame-symbols-cases}.
In the second factor, both $T_{f_1}$ and $T_{g_2}$ are invertible.  Hence their joint torsion is the multiplicative commutator 
\[
\det \left( T_{f_1} T_{g_2} T_{f_1^{-1}} T_{g_2^{-1}} \right) 
\]
which is calculated as the claimed integral by the Helton-Howe-Pincus formula \ref{lemma:torsion-of-exponentials} and the Berger-Shaw formula \ref{thm:Berger-Shaw}.
\end{proof}

\subsection{An integral formula}

Now we apply and refine the results of Section \ref{sec:Fredholm-modules} in the case of Toeplitz operators.  
Let $L^2 = L^2(S^1)$ and $H^2 = H^2(S^1)$.  
%
%
Recall that 
if $\phi \in L^\infty(S^1)$, the spectrum of the multiplication operator $\phi \in \mathcal B(L^2)$ is the essential range of $\phi$.
If $\phi \in C(S^1)$, the spectrum of the Toeplitz operator $T_\phi \in \mathcal B(H^2)$ consists of $\phi(S^1)$, together with the connected components of $\mathbf C - \phi(S^1)$ about which $\phi$ has nonzero winding number.
See for instance 
\cite[Chapter 7]{Douglas}.  
As a consequence, we obtain the following:


%

\begin{theorem} \label{thm:holomorphic-functional-calculus-commutes}
Let $\phi \in L^\infty(S^1) \cap W^{\frac{1}{2},2}(S^1)$.
If either
\begin{enumerate}
\item $f$ is holomorphic on a neighborhood of $\phi(S^1)$, or
\item $\phi$ is real-valued and $f$ is $C^\infty$ on $\phi(S^1)$,
\end{enumerate}
then $f(T_\phi) - T_{f\circ \phi} \in \mathcal L^1(H^2)$.
\end{theorem}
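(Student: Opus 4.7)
The plan is to derive the theorem as a specialization of Theorem \ref{thm:holomorphic-functional-calculus-commutes-module} to the Hardy-space Fredholm module on $L^2(S^1)$. Concretely, I take $H = L^2(S^1)$, let $P: L^2(S^1) \to H^2(S^1)$ be the Hardy projection, and let $A$ be the algebra generated (under the relevant functional calculi) by $L^\infty(S^1) \cap W^{\frac{1}{2},2}(S^1)$. Lemma \ref{lemma:2-norm-commutator} gives $\|[\phi,P]\|_2 \leq \|\phi\|_{W^{\frac{1}{2},2}}$ and hence $[\phi,P] \in \mathcal L^2$ for every $\phi \in A$, so $(A, H, 2P - I)$ is a $2$-summable Fredholm module. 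This puts us in the hypotheses of Theorem \ref{thm:holomorphic-functional-calculus-commutes-module} with $p = 1$, whose conclusion is precisely an $\mathcal L^1$-remainder.

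What remains is to match the spectral hypotheses. Multiplication by $\phi$ on $L^2(S^1)$ is normal, with spectrum equal to the essential range of $\phi$; in particular $\sigma(\phi) \subseteq \phi(S^1)$. In case (1), the holomorphic functional calculus $f(T_\phi)$ appearing in the conclusion requires $f$ to be holomorphic on a neighborhood of $\sigma(T_\phi)$, which contains $\phi(S^1)$ and hence $\sigma(\phi)$. Thus $f$ is holomorphic on a neighborhood of $\sigma(\phi) \cup \sigma(T_\phi)$, and a single admissible contour $\Gamma$ enclosing $\sigma(T_\phi)$ automatically encloses $\sigma(\phi)$ and defines both $f(\phi)$ and $f(T_\phi)$, meeting hypothesis (1) of Theorem \ref{thm:holomorphic-functional-calculus-commutes-module}. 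In case (2), $\phi$ real-valued makes both multiplication by $\phi$ and $T_\phi$ self-adjoint, with spectra contained in the real interval bounded by the essential infimum and supremum of $\phi$; since this interval is a subset of $\phi(S^1)$, smoothness of $f$ on $\phi(S^1)$ gives smoothness on a neighborhood of $\sigma(\phi) \cup \sigma(T_\phi)$ (via Whitney extension if necessary), matching hypothesis (2).

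Invoking Theorem \ref{thm:holomorphic-functional-calculus-commutes-module} then yields $f(T_\phi) - T_{f(\phi)} \in \mathcal L^1$. The final step is the identification $T_{f(\phi)} = T_{f \circ \phi}$, which is immediate from the fact that for the multiplication operator $M_\phi$ on $L^2(S^1)$ one has $f(M_\phi) = M_{f \circ \phi}$ (true on polynomials and extending to the holomorphic and smooth functional calculi by continuity), so that compressing to $H^2(S^1)$ gives the claim.

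The main subtle point is that $\sigma(T_\phi)$ can be strictly larger than $\phi(S^1)$ in case (1), due to connected components of $\mathbf C \setminus \phi(S^1)$ where $\phi$ has nonzero winding number; this is addressed as above by noting that the very well-definedness of $f(T_\phi)$ forces $f$ to extend holomorphically across those regions, so the admissible-contour hypothesis of Theorem \ref{thm:holomorphic-functional-calculus-commutes-module} is automatically available. No such enlargement arises in case (2), where self-adjointness of $T_\phi$ confines its spectrum to a real interval built from $\phi(S^1)$.
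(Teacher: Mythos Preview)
Your proposal is correct and takes essentially the same approach as the paper: reduce to Theorem \ref{thm:holomorphic-functional-calculus-commutes-module} with $p=1$ after checking the spectral and contour hypotheses via the known descriptions of $\sigma(\phi)$ and $\sigma(T_\phi)$. The paper's argument is terser---it simply notes that an admissible contour for $f(T_\phi)$ also defines $f(\phi)$ and then invokes Theorem \ref{thm:holomorphic-functional-calculus-commutes-module}---but the content is the same.
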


\begin{proof}
Let $\Gamma$ be an admissible contour for defining $f(T_\phi) \in \mathcal B(H^2)$, as in \eqref{eq:holomorphic-functional-calculus}.  By the above discussion, we see that $\Gamma$ can also be used to define $f(\phi) \in \mathcal B(L^2)$, that is,
\[
f(\phi) = \frac{1}{2\pi i} \int_\Gamma (\lambda - \phi)^{-1} f(\lambda) \, d\lambda
\]
The result then follows by Theorem \ref{thm:holomorphic-functional-calculus-commutes-module} with $p=1$.
\end{proof}

We conclude with an illustration of the above results by deriving an integral formula for the joint torsion of Toeplitz operators \cite[Theorem 7]{Joint}.  An equivalent formula was previously obtained in \cite{Helton-Howe1}.  See also \cite{Esnault-Viehweg}.

\begin{theorem}
If $f,g \in C^\infty(S^1)$ are non-vanishing functions, then
\[
\tau(T_f, T_g) =
\exp \frac{1}{2\pi i} \left( \int_{S^1} \, \log f \, d(\log g) - \log g(p) \int_{S^1} \, d(\log f) \right)
\]
\end{theorem}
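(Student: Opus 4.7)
Since $f$ has winding number $m$ and $g$ has winding number $n$, I would begin by writing $f = z^m e^h$ and $g = z^n e^k$ for smooth functions $h, k$ on $S^1$. Because $z^m, z^n \in H^\infty$, one has the exact factorizations $T_f = T_{z^m} T_{e^h}$ and $T_g = T_{z^n} T_{e^k}$. The Berger--Shaw formula (Theorem \ref{thm:Berger-Shaw}) ensures all relevant commutators lie in $\mathcal L^1$, so bimultiplicativity of joint torsion in each slot (Lemma \ref{lemma:torsion-Steinberg-etc}(1) and (3)) yields
\[
\tau(T_f, T_g) = \tau(T_{z^m}, T_{z^n}) \cdot \tau(T_{e^h}, T_{z^n}) \cdot \tau(T_{z^m}, T_{e^k}) \cdot \tau(T_{e^h}, T_{e^k}).
\]

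I would then evaluate each factor using results already established. The pure shift term is $\tau(T_{z^m}, T_{z^n}) = \tau(T_z, T_z)^{mn} = (-1)^{mn}$ by Lemma \ref{lemma:torsion-Steinberg-etc}(8), using $\ind(T_z) = -1$. For the continuous term, Theorem \ref{thm:holomorphic-functional-calculus-commutes} gives $T_{e^h} - e^{T_h} \in \mathcal L^1$ (and similarly for $k$), so Lemma \ref{lemma:torsion-of-exponentials} combined with Berger--Shaw yields
\[
\tau(T_{e^h}, T_{e^k}) = \tau(e^{T_h}, e^{T_k}) = \exp \tr [T_h, T_k] = \exp \frac{1}{2\pi i}\int_{S^1} h\, dk.
\]
For the mixed terms, decompose $h = h_- + h_+$ with $h_+ = Ph$, so that $e^h = e^{h_-} e^{h_+}$ pointwise and hence $T_{e^h} = T_{e^{h_-}} T_{e^{h_+}}$ modulo a trace class correction. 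Proposition \ref{prop:torsion-invertible}(2) (with $\lambda = 0$) gives $\tau(T_{e^{h_+}}, T_z) = e^{h_+(0)}$, while $\overline{e^{h_-}} = e^{\overline{h_-}}$ belongs to $H^\infty$ with value $1$ at the origin, so Lemma \ref{lemma:torsion-Steinberg-etc}(6) together with Proposition \ref{prop:non-commutative-tame-symbol-invertible}(2) produces $\tau(T_{e^{h_-}}, T_z) = 1$. Thus $\tau(T_{e^h}, T_{z^n}) = e^{n h_+(0)}$, and dually $\tau(T_{z^m}, T_{e^k}) = e^{-m k_+(0)}$, giving
\[
\tau(T_f, T_g) = (-1)^{mn}\, e^{n h_+(0) - m k_+(0)}\, \exp \frac{1}{2\pi i}\int_{S^1} h\, dk.
\]

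The final step is to check that this matches the stated integral formula. Substituting $\log f = m \log z + h$ and $\log g = n \log z + k$ (for a branch of $\log z$ cut at $p$) and parametrizing $z = e^{i\theta}$, a direct computation gives $\int_{S^1}\log z\, d\log z = -2\pi^2$ (which contributes the sign $(-1)^{mn}$ after division by $2\pi i$) and $\int_{S^1} h\, d\log z = 2\pi i\, h_+(0)$, while integration by parts yields $\int_{S^1} \log z\, dk = 2\pi i(k(p) - k_+(0))$. The boundary contribution $m k(p)$ is precisely canceled by $\log g(p)\int_{S^1}d\log f = 2\pi i\, m\, k(p)$, leaving the exponent $\pi i\, mn + n h_+(0) - m k_+(0) + \frac{1}{2\pi i}\int h\, dk$, which upon exponentiation reproduces the product formula. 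The main obstacle is the bookkeeping across the different normalizations: identifying $k_+(0)$ with the zeroth Fourier coefficient of $k$, tracking the jump of $\log z$ at the cut point $p$, and confirming the exact cancellation of the boundary term against $\log g(p)\int d\log f$.
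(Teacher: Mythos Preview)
Your proof is correct and follows the same strategy as the paper: factor $f=z^m e^h$ and $g=z^n e^k$, use bimultiplicativity to split $\tau(T_f,T_g)$ into four factors, evaluate them via Propositions~\ref{prop:torsion-invertible} and~\ref{prop:non-commutative-tame-symbol-invertible}, Lemma~\ref{lemma:torsion-of-exponentials}, and Berger--Shaw, and then match the result against the stated integral by direct substitution. Two harmless slips to tidy up: the claim $z^m\in H^\infty$ fails for $m<0$ (but $T_f\equiv T_{z^m}T_{e^h}\pmod{\mathcal L^1}$, which is all you actually use), and $\int_{S^1}\log z\,d\log z=-2\pi^2-2\pi\alpha$ carries a basepoint term which cancels against the $n\log z(p)=in\alpha$ contribution to $\log g(p)$ that you likewise omitted.
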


The integrals are taken counterclockwise starting at any point $p = e^{i \alpha} \in S^1$.  If $h(e^{i\theta}) = |h(e^{i\theta})| e^{i\phi(\theta)}$ for a continuous function $\phi: [\alpha, \alpha + 2 \pi] \to \mathbf R$, then we take $\log h (e^{i\theta} ) = \log |h| + i \phi(\theta)$.  Any other choice of $\log h$ will differ by a multiple of $2\pi i$ and hence will leave the quantity in the theorem unaffected.

\begin{proof}
Let $n$ and $m$ be the winding numbers of $f$ and $g$, respectively.
Define $\tilde f$, $\tilde f_+$, and $\tilde f_-$ as in \eqref{eq:holomorphic-antiholomorphic}, and similarly for $g$.
By Theorem \ref{thm:holomorphic-functional-calculus-commutes}, $T_{e^{\tilde f}} = e^{T_{{\tilde f}}}$ modulo $\mathcal L^1$, so we find
\[
\tau(T_f, T_g) = \tau(T_z, T_z)^{mn} \cdot \tau(T_z, T_{e^{\tilde g}})^n \cdot \tau(T_{e^{\tilde f}}, T_z)^m \cdot \tau(e^{T_{\tilde f}}, e^{T_{\tilde g}})
\]
The first factor is $(-1)^{mn}$ by Proposition \ref{prop:single-tame-symbol}.  By applying Proposition \ref{prop:torsion-invertible} with $\lambda = 0$ and both $\phi = e^{\tilde g_+}$ and $\phi = e^{\tilde g_-}$, we find that the second term is 
$e^{-n \tilde g_+(0)}$.  
Similarly, the third term is
$e^{m \tilde f_+(0)}$.
By Lemma \ref{lemma:torsion-of-exponentials} and Theorem \ref{thm:Berger-Shaw}, the fourth term is
\[
\exp \left( \frac{1}{2 \pi i} \int \tilde f \, d \tilde g \right)
\]
Hence
\begin{equation} \label{eq:preliminary-integral-formula}
\tau(T_f, T_g) = \exp \left( \pi i m n + m \tilde f_+(0) - n \tilde g_+(0) + \frac{1}{2 \pi i} \int \tilde f \, d \tilde g \right)
\end{equation}


Now we calculate the last term in the exponential:
\begin{align*}
\int \tilde f \, d \tilde g
& = \int \log(e^{-in \theta} f) \, d(\log( e^{-i m \theta} g) \\
& = \int -i n \theta \, d \tilde g 
+ \int \log f \, d ( \log (e^{-i m \theta} g ) )
\end{align*}
Integration by parts gives
\[
\int -i n \theta \, d \tilde g = 
- i n \theta \tilde g |_{\alpha}^{\alpha + 2 \pi}
+ \int i n \tilde g \, d \theta
\]
The first term is $-2 \pi i n \tilde g(p)$ since $\tilde g$ has winding number zero.  By writing $\tilde g$ in terms of the orthonormal basis elements $e^{ik\theta}$, we see that the second term is $2 \pi i n \tilde g_+(0)$.  Next we calculate
\[
\int \log f \, d ( \log ( e^{-i m \theta} g ) )
= \int \tilde f \cdot -i m \, d \theta
+ \int i n \theta \cdot -i m \, d \theta
+ \int \log f \, d ( \log g )
\]
As before the first term is $-2 \pi i m \tilde f_+(0)$, and the second term is $2 mn \pi^2 + 2 \pi mn \alpha$.
Combining this with \eqref{eq:preliminary-integral-formula} gives
\[
\tau(T_f, T_g) = \exp \left( - n \tilde g(p) - i mn \alpha + \frac{1}{2\pi i} \int \log f \, d ( \log g ) \right)
\]
The first term is
\[
-n ( -i m \alpha + \log g(p) )
= i mn \alpha - \frac{1}{2 \pi i} \log g(p) \int d (\log f)
\]
Hence
\[
\tau(T_f, T_g) =
\exp \frac{1}{2\pi i} \left( \int_{S^1} \, \log f \, d(\log g) - \log g(p) \int_{S^1} \, d(\log f) \right) \qedhere
\]
\end{proof}

\printbibliography

\end{document}